\theoremstyle{plain}                            
\newtheorem{stelling}{Theorem}[section]
\newtheorem{gevolg}[stelling]{Corollary}
\newtheorem{lemma}[stelling]{Lemma}
\newtheorem{prop}[stelling]{Proposition}
\DeclareMathOperator{\bfk}{\boldsymbol{k}}
\theoremstyle{definition}
\newtheorem{definition}[stelling]{Definition}
\newtheorem{opm}[stelling]{Remark}
\newenvironment{vb}
  {\pushQED{\qed}\voorbeeld}
  {\popQED\endvoorbeeld}
\title{Congruence conditions for the mod $\boldsymbol{\lambda}$ values of the Fourier coefficients of classical eigenforms}
\date{\today}
\author{Michael A. Daas}
\begin{document}

\begin{abstract}
We classify all instances of the condition $a_{p}(f) \equiv x \bmod \lambda$ being related to a congruence on the prime $p$, where $a_{p}(f)$ denotes the $p$th Fourier coefficient of a classical normalised cuspidal eigenform $f$ and $\lambda$ is a prime in the number field generated by the Fourier coefficients of $f$. This classification is done in terms of the (projective) image of the mod $\lambda$ Galois representation associated with $f$ and extends work by Swinnerton-Dyer. We highlight that for $x = 0$, this condition is more often implied by a congruence on the prime $p$ than the general value of $a_{p}(f) \bmod \lambda$. Finally, we illustrate various instances of these congruences through examples from the setting of weight 2 newforms attached to rational elliptic curves.
\end{abstract}

\maketitle

\setcounter{tocdepth}{1}
\tableofcontents

\section{Introduction}\label{intro}

Building on the work of Ramanujan, congruences between modular forms enjoy a long and rich history, and informed the seminal work of Serre \cite{serre} and his notion of $\ell$-adic modular forms, as $\ell$-adic limits of $\textbf{q}$-expansions coming from classical modular forms satisfying congruences modulo increasingly high powers of $\ell$. The Fourier coefficients of modular forms, especially normalised eigenforms, often contain a wealth of arithmetic information. Given such a modular cuspform $f$ with Fourier coefficients $a_n(f)$ for integers $n \geq 1$ and a prime $\lambda$ in the number field generated by the Fourier coefficients of $f$, one may ask the question of classifying all primes $p$ for which $a_{p}(f) \equiv x \bmod \lambda$ for a given residue class $x \bmod \lambda$.

In general, this condition can be rather intractible. Even though it is governed by the behaviour of Frobenius at $p$ in a certain field extension, the general non-abelian nature of this extension often prevents one from writing down simple conditions that classify these primes $p$ succinctly. Instead, they are often subject to non-abelian reciprocity laws, as first described in \cite{shimura}. 

For rational normalised eigenforms for the full modular group, all possible congruences for their Fourier coefficients were classified by Swinnerton-Dyer in Lemma 1 and the subsequent Corollary in both \cite{SD1} and \cite{SD2}. The present work aims to extend and refine this analysis to a larger set of classical modular forms, and to illustrate how level structure and the field of definition affect the analysis. It also fulfils an expository purpose by providing elementary proofs where possible and ample examples to support the main results. 

\subsection{Setup and definitions}

\!We recall some notation. Let $\bfk \geq 2$ and $N \geq 1$ be integers and let $f \in \mathcal{S}_{\bfk}( N, \chi )$ be a weight $\bfk$ normalised cuspidal eigenform of level $N$ with nebentypus $\chi : ( \mathbb{Z} / N \mathbb{Z} )^{\times} \to \mathbb{C}^{\times}$. Let 
\[
K_f \colonequals \mathbb{Q}\big( \{ a_n(f) \mid n \in \mathbb{N} \} \big)
\]
be the number field generated by the Fourier coefficients of $f$ and let $\lambda$ be a prime of the ring of integers $\mathcal{O}_f$ of $K_f$. We let $\mathbb{F}_{\lambda} \colonequals \mathcal{O}_f / \lambda$ denote the residue field of $\lambda$ and let $\ell \colonequals \text{char}( \mathbb{F}_{\lambda} )$ denote its characteristic.

By a famous theorem of Deligne \cite{deligne}, attached to $f$ there is an irreducible 2-dimensional $\lambda$-adic Galois representation
\[
\rho_{f, \lambda} : G_{\mathbb{Q}} \to \text{GL}_2( K_{f, \lambda} ),
\]
where $G_{\mathbb{Q}} \colonequals \text{Gal}( \overline{ \mathbb{Q} } / \mathbb{Q} )$ and where $K_{f,\lambda}$ denotes the completion of $K_f$ at the prime $\lambda$.  This representation is unramified at all primes $p \nmid N \ell$ and for any prime $\mathfrak{p} \subset \overline{ \mathbb{Z} }$ lying over such $p$, the characteristic polynomial of $\rho_{f, \lambda}( \text{Frob}_{\mathfrak{p}} )$ is given by
\[
 X^2 - a_p(f)X+\chi(p)p^{\bfk-1},
\]
where $X$ is an indeterminate. The representation is also \emph{odd} in the sense that the determinant of the image of complex conjugation is $-1$. Since $G_{\mathbb{Q}}$ is compact as a topological group and $\rho_{f, \lambda}$ is continuous, this representation space admits a Galois stable lattice. Any choice of such a lattice will induce an integral Galois representation that we will, by slight abuse of notation, also denote by
\[
\rho_{f, \lambda} : G_{\mathbb{Q}} \to \text{GL}_2( \mathcal{O}_{f, \lambda} ),
\]
where $\mathcal{O}_{f, \lambda}$ denotes the ring of integers in $K_{f,\lambda}$. We note that different choices of such stable lattices need not induce isomorphic Galois representations in this integral context, but their \emph{semisimplifications}, denoted $\rho^{\text{ss}}_{f, \lambda}$, will always be isomorphic. Composition with the natural reduction map $\mathcal{O}_{f, \lambda} \to \mathbb{F}_{\lambda}$ yields a Galois representation which we will denote by
\[
\rho_f^{\lambda} : G_{\mathbb{Q}} \to \text{GL}_2( \mathbb{F}_{\lambda} ),
\]
and $\rho_f^{\text{ss}, \lambda}$ will denote its semisimplification. We also use the notation $\overline{\rho}_f^{\lambda}$ for the composition of $\rho_f^{\lambda}$ with the natural projection map $\text{GL}_2( \mathbb{F}_{\lambda} ) \to \text{PGL}_2( \mathbb{F}_{\lambda} )$. Finally, we let $G_{\lambda}$ and $\overline{G}_{\lambda}$ denote the images of $\rho_f^{\lambda}$ and $\overline{\rho}_f^{\lambda}$ respectively, dropping $f$ from the notation to improve readability.

In recent work, Charlton, Medvedovsky and Moree \cite{MoreeQ} studied the Euler-Kronecker constants of Dirichlet L-functions associated with multiplicative sets (i.e. a subset of the positive integers $\mathbb{N}$ whose indicator function is multiplicative) of the form $\{ n \in \mathbb{N} : \ell \nmid a_n(f) \}$ for various classical modular eigenforms $f$. These values were computed by relating these L-functions to certain Artin L-functions, whose Euler-Kronecker constants are more easily computed to comparatively high precision. 

It was further observed that, even in cases when the general behaviour of $a_{p}(f) \bmod \lambda$ is governed by a non-abelian extension, it can happen that the \emph{vanishing} of this quantity is given by a congruence condition on the prime $p$, simplifying their computations. This motivates the following definitions.

\begin{definition}\label{fproper}
A class $x \in \mathbb{F}_{\lambda}$ is called $f$-\emph{proper} if there exists a prime $p \nmid N \ell$ such that $a_p( f ) \equiv x \bmod \lambda$.
\end{definition}

\begin{definition}\label{lab}
Let $f \in \mathcal{S}_{\bfk}( N, \chi )$ be normalised cuspidal eigenform and let $x \in \mathbb{F}_{\lambda}$ be an $f$-proper class. 
\begin{itemize}
\item We say that $f$ is \emph{weakly $\lambda$-abelian} for the class $x \in \mathbb{F}_{\lambda}$ if there exist a positive integer $M$ and a non-empty subset $\varnothing \neq S_x \subset (\mathbb{Z} / M \mathbb{Z})^{\times}$ such that for all $p \nmid N \ell$,
\[
p \bmod M \in S_x \implies a_p( f ) \equiv x \bmod \lambda.
\]
\item We say that $f$ is \emph{semi-$\lambda$-abelian} for the class $x \in \mathbb{F}_{\lambda}$ if there exist a positive integer $M$ and a proper subset $S_x \subsetneq (\mathbb{Z} / M \mathbb{Z})^{\times}$ such that for all $p \nmid N \ell$,
\[
a_p( f ) \equiv x \bmod \lambda \implies p \bmod M \in S_x.
\]
\item We say that $f$ is \emph{$\lambda$-abelian} for the class $x \in \mathbb{F}_{\lambda}$ if it is both weakly $\lambda$-abelian and semi-$\lambda$-abelian for $x \in \mathbb{F}_{\lambda}$. Equivalently, if there exist a positive integer $M$ and a subset $S_x \subset (\mathbb{Z} / M \mathbb{Z})^{\times}$ such that for all $p \nmid N \ell$, \vspace{-2mm}
\[
a_p( f ) \equiv x \bmod \lambda \iff p \bmod M \in S_x.
\]
\item Finally, we say that $f$ is \emph{totally $\lambda$-abelian} if it is $\lambda$-abelian for all $f$-proper $x \in \mathbb{F}_{\lambda}$. 
\end{itemize}
\end{definition}

\begin{vb}\label{introexam}
The weight 12 Ramanujan-$\Delta$-function is defined as
\[
\Delta( \textbf{q} ) = \eta( \textbf{q} )^{24}, \quad \text{where} \quad \eta( \textbf{q} ) = \textbf{q}^{1/24} \prod_{n=1}^{\infty} ( 1 - \textbf{q}^n ).
\]
It is now easy to see that $\Delta( \textbf{q} ) \equiv \eta( \textbf{q} ) \eta( \textbf{q}^{23} ) \bmod 23$, which establishes a congruence between $\Delta$ and a cusp form of weight 1. This observation can be used to explain that the mod 23 Galois representation $\rho_{\Delta}^{23}$ associated with $\Delta \in \mathcal{S}_{12}( \text{SL}_2( \mathbb{Z} ) )$ is in fact given by an irreducible Artin representation whose kernel corresponds to the Hilbert class field of $\mathbb{Q}( \sqrt{-23} )$, which has Galois group $S_3$ over $\mathbb{Q}$. One deduces that
\[
a_p( \Delta ) \bmod 23 = \begin{cases} 0 \bmod 23 &\text{if $-23$ is not a square modulo $p$;} \\ 2 \bmod 23 &\text{if $p = x^2 + 23y^2$ for some $x,y \in \mathbb{Z}$;} \\ -1 \bmod 23 &\text{otherwise.} \end{cases}
\]
The condition $a_p( \Delta ) \equiv 0 \bmod 23$ is thus a congruence condition on $p$, whereas for the two other $\Delta$-proper residue classes mod 23 this is not true. In other words, the modular form $\Delta$ is $23$-abelian for the class $0 \bmod 23$, but it is not totally 23-abelian. For more context, see Example 1.2 in \cite{vonk2021}.
\end{vb}

\subsection{The classifications}

This paper determines necessary and sufficient conditions for all the notions introduced in Definition \ref{lab}. These conditions will be expressed in terms of the images of the associated mod $\lambda$ Galois representations $\rho_f^{\lambda}$ and $\overline{\rho}_f^{\lambda}$. We let $\mathbb{F}_{\lambda^2}$ denote the unique quadratic extension of $\mathbb{F}_{\lambda}$. Recall that $G_{\lambda}$ and $\overline{G}_{\lambda}$ denote the images of $\rho_f^{\lambda}$ and $\overline{\rho}_f^{\lambda}$ respectively. Our first set of main results is now as follows.

\begin{stelling}\label{main1}
A normalised cuspidal eigenform $f \in \mathcal{S}_{\bfk}( N, \chi )$ is totally $\lambda$-abelian if and only if $G_{\lambda}$ is conjugate to a subgroup of the Borel subgroup of upper triangular matrices in $\emph{GL}_2( \mathbb{F}_{\lambda^2} )$.
\end{stelling}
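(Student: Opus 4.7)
The plan is to analyse when the trace function of $\rho_f^\lambda$ factors through an abelian quotient of $G_\mathbb{Q}$. For the $(\Leftarrow)$ direction, suppose $G_\lambda$ is conjugate to a subgroup of the Borel of $\text{GL}_2(\mathbb{F}_{\lambda^2})$. After conjugation $\rho_f^\lambda \otimes \mathbb{F}_{\lambda^2}$ is upper-triangular, exhibiting two characters $\psi_1, \psi_2 : G_\mathbb{Q} \to \mathbb{F}_{\lambda^2}^\times$ on the diagonal. Since $\mathbb{F}_{\lambda^2}^\times$ is a finite abelian group, the Kronecker--Weber theorem identifies each $\psi_i$ with a Dirichlet character; taking a common modulus $M$ then yields $a_p(f) \equiv \psi_1(p) + \psi_2(p) \bmod \lambda$ for $p \nmid N\ell M$. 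Setting $S_x = \{ a \in (\mathbb{Z}/M\mathbb{Z})^\times : \psi_1(a) + \psi_2(a) \equiv x \bmod \lambda \}$ witnesses $\lambda$-abelianness for every $f$-proper $x$.

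For $(\Rightarrow)$, suppose $f$ is totally $\lambda$-abelian. Since $\mathbb{F}_\lambda$ is finite, only finitely many classes are $f$-proper; take $M$ to be the lcm of the associated moduli $M_x$ from Definition \ref{lab}. The hypothesis together with Dirichlet's theorem ensures that $a_p(f) \bmod \lambda$ is a well-defined function of $p \bmod M$ for $p \nmid N\ell M$. Chebotarev density and the continuity of $\rho_f^\lambda$ then force $\text{tr} \circ \rho_f^\lambda$ to be constant on cosets of $H \colonequals \text{Gal}(\overline{\mathbb{Q}}/\mathbb{Q}(\zeta_M))$, so $\text{tr}(\rho_f^\lambda(hg)) = \text{tr}(\rho_f^\lambda(g))$ for all $h \in H$, $g \in G_\mathbb{Q}$. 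If $\rho_f^\lambda$ were absolutely irreducible, Burnside's theorem would give $\overline{\mathbb{F}}_\ell\langle \rho_f^\lambda(G_\mathbb{Q}) \rangle = M_2(\overline{\mathbb{F}}_\ell)$, and for fixed $h \in H$ the linear form $A \mapsto \text{tr}((\rho_f^\lambda(h) - I)A)$ would vanish on $\rho_f^\lambda(G_\mathbb{Q})$, hence on $M_2(\overline{\mathbb{F}}_\ell)$; non-degeneracy of the trace pairing then forces $\rho_f^\lambda(h) = I$, whence $\rho_f^\lambda$ factors through the abelian quotient $(\mathbb{Z}/M\mathbb{Z})^\times$, contradicting absolute irreducibility in dimension $2$.

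Therefore $\rho_f^{\text{ss}, \lambda} \otimes \overline{\mathbb{F}}_\ell = \chi_1 \oplus \chi_2$. Since $\rho_f^{\text{ss}, \lambda}$ is defined over $\mathbb{F}_\lambda$, the unordered pair $\{\chi_1, \chi_2\}$ is $\text{Gal}(\overline{\mathbb{F}}_\ell/\mathbb{F}_\lambda)$-stable, so either both characters are individually defined over $\mathbb{F}_\lambda$, or they form a Galois orbit of length $2$ and hence both take values in $\mathbb{F}_{\lambda^2}^\times$. Either way, $\rho_f^{\text{ss}, \lambda} \otimes \mathbb{F}_{\lambda^2}$ is diagonalisable, and since $\rho_f^\lambda$ shares its Jordan--Hölder factors with $\rho_f^{\text{ss}, \lambda}$, the representation $\rho_f^\lambda \otimes \mathbb{F}_{\lambda^2}$ is reducible, giving $G_\lambda$ conjugate to a subgroup of the Borel of $\text{GL}_2(\mathbb{F}_{\lambda^2})$. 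I expect the main obstacle to be the Burnside--trace-pairing argument ruling out absolute irreducibility, together with the subsequent descent from $\overline{\mathbb{F}}_\ell$ to $\mathbb{F}_{\lambda^2}$.
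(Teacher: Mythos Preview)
Your argument is correct and takes a genuinely different route from the paper's proof.

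For the forward direction, the paper works entirely inside $G_\lambda$ via Proposition~\ref{red1}: total $\lambda$-abelianness forces the trace to be constant on each coset of $[G_\lambda,G_\lambda]$, so in particular every element of $[G_\lambda,G_\lambda]$ has trace $2$ and determinant $1$, hence is unipotent. A two-line matrix computation then shows that multiplication by a nontrivial unipotent element preserves the trace only on upper-triangular matrices, forcing $G_\lambda$ into the Borel; the residual abelian case is Lemma~\ref{abgrp}. Your approach instead passes through the representation: you observe that total $\lambda$-abelianness makes $\operatorname{tr}\circ\rho_f^\lambda$ factor through an abelian quotient, and then the Burnside/trace-pairing argument kills absolute irreducibility in one stroke, after which reducibility over $\mathbb{F}_{\lambda^2}$ follows by Galois descent on the Jordan--H\"older factors. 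Both directions of the equivalence are handled essentially the same way in the two proofs.

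What each approach buys: the paper's argument is deliberately elementary (in line with its stated aim of avoiding Dickson-style machinery) and makes visible the concrete mechanism---unipotence of commutators---that also drives the later Propositions~\ref{abgrp2} and Theorem~\ref{weaklab}. Your argument is shorter once Burnside is granted, and it explains conceptually \emph{why} the Borel condition appears: it is exactly the failure of absolute irreducibility. One small point to tighten in your write-up: in the $(\Leftarrow)$ direction you should note that the characters $\psi_i$ are unramified outside $N\ell$ (since they factor through $G_\lambda$), so the conductor $M$ is supported on primes dividing $N\ell$ and the congruence genuinely holds for all $p\nmid N\ell$ as Definition~\ref{lab} requires, not merely for $p\nmid N\ell M$.
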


\begin{stelling}\label{main2}
If a normalised cuspidal eigenform $f \in \mathcal{S}_{\bfk}( N, \chi )$ is weakly $\lambda$-abelian for some $f$-proper class $x \in \mathbb{F}_{\lambda}$, but \emph{not} totally $\lambda$-abelian, then $x = 0$. This happens if and only if $\overline{G}_{\lambda} \cong D_n$ is a dihedral group, where $n > 1$ and $\ell \nmid n$.
\end{stelling}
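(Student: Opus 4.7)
The plan is to translate the weakly $\lambda$-abelian condition into a trace constraint on elements of $G_\lambda$ via Chebotarev density, apply the $2\times 2$ trace identity $\text{tr}(AB) + \det(B)\,\text{tr}(AB^{-1}) = \text{tr}(A)\,\text{tr}(B)$ to force $x = 0$, and then combine Cayley-Hamilton with the classification of finite subgroups of $\text{PGL}_2(\overline{\mathbb{F}}_\ell)$ to identify $\overline{G}_\lambda$ as dihedral.

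Let $K$ be the fixed field of $\ker \rho_f^\lambda$ and let $H \triangleleft G_\lambda$ be the subgroup corresponding to $K \cap \mathbb{Q}(\zeta_M)$, so that $G_\lambda/H$ is abelian. By Chebotarev, the hypothesis that $p \bmod M \in S_x$ implies $a_p(f) \equiv x \bmod \lambda$ translates into the existence of an element $g_0 \in G_\lambda$ with $\text{tr}(\rho_f^\lambda(g_0 h)) = x$ for every $h \in H$. The trace identity with $A = g_0$, $B = h$ (noting that $h^{\pm 1} \in H$) then gives $x(1 + \det h - \text{tr}(h)) = 0$. If $x \neq 0$, every $h \in H$ satisfies $\text{tr}(h) = 1 + \det h$ and thus has $1$ as an eigenvalue; specialising to $[G_\lambda, G_\lambda] \subseteq H$ shows this commutator consists entirely of unipotent matrices. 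Either it is trivial (making $G_\lambda$ abelian) or non-trivial, in which case its $\text{GL}_2$-normalizer is a Borel, forcing $G_\lambda$ into a Borel of $\text{GL}_2(\mathbb{F}_{\lambda^2})$. Either outcome contradicts Theorem \ref{main1}, so $x = 0$.

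With $x = 0$, Cayley-Hamilton yields $(g_0 h)^2 = -\det(g_0)\det(h)\cdot I$ for all $h \in H$, so every image $\overline{g_0 h}$ has order at most $2$ in $\text{PGL}_2$. Setting $h = 1$ gives $\overline{g}_0^2 = 1$, and $(\overline{g_0 h})^2 = 1$ rearranges to $\overline{g}_0 \overline{h}\overline{g}_0^{-1} = \overline{h}^{-1}$, so $\overline{g}_0$ inverts $\overline{H}$ and $\overline{H}$ is abelian. The abelian subgroups of $\text{PGL}_2(\overline{\mathbb{F}}_\ell)$ are cyclic or Klein four; the latter is excluded because $\langle \overline{H}, \overline{g}_0\rangle$ would then produce a copy of $(\mathbb{Z}/2)^3$ inside $\text{PGL}_2$, which does not embed. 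So $\overline{H} \cong C_n$ and $\langle \overline{H}, \overline{g}_0\rangle \cong D_n$. Since the normalizer of a non-trivial cyclic semisimple subgroup in $\text{PGL}_2(\overline{\mathbb{F}}_\ell)$ is the dihedral $N(T) = T \rtimes \mathbb{Z}/2$ for the unique torus $T$ containing $\overline{H}$, and $\overline{G}_\lambda/\overline{H}$ is abelian, $\overline{G}_\lambda$ itself is dihedral. Finally $n = 1$ would make $\overline{G}_\lambda$ cyclic and in a Borel, while $\ell \mid n$ would force $C_n$ to contain a non-trivial unipotent---and since no non-trivial semisimple element commutes with a non-trivial unipotent in $\text{PGL}_2$, all of $C_n$ would be unipotent, placing $D_n$ inside the unique Borel stabilizing the corresponding fixed line. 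Both possibilities contradict our assumption, so $n \geq 2$ and $\ell \nmid n$.

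For the converse, if $\overline{G}_\lambda \cong D_n$ with $n \geq 2$ and $\ell \nmid n$, the unique cyclic index-two subgroup $C_n \triangleleft D_n$ corresponds via Galois theory to a quadratic extension $F/\mathbb{Q}$ inside $K$. The reflection coset consists of order-$2$ elements of $\text{PGL}_2$ which lift to trace-zero elements of $\text{GL}_2(\mathbb{F}_\lambda)$, so taking $M = |\text{disc}(F)|$ and $S_0 \subset (\mathbb{Z}/M\mathbb{Z})^\times$ to be the non-empty set of classes on which the quadratic character of $F$ equals $-1$ realises the weakly $\lambda$-abelian condition for $x = 0$; the non-Borel-ness of $D_n$ for $n \geq 2$ prevents total $\lambda$-abelianity. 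The main obstacle in the forward direction is ruling out that $\overline{G}_\lambda$ is an exceptional group $A_4, S_4, A_5$ or contains $\text{PSL}_2(\mathbb{F}_{\lambda^m})$; this amounts to a case-by-case verification that none of these groups admits a normal subgroup whose abelian quotient has a non-trivial coset consisting entirely of order-$2$ elements. Some additional care is needed in characteristic $2$, where trace-zero matrices are either scalar or unipotent up to a scalar rather than giving projective order $2$, and in particular $\overline{g}_0$ may be trivial, requiring a separate analysis of the resulting subgroup $H$.
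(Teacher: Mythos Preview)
Your reduction to $x=0$ via the trace identity $\text{tr}(AB)+\det(B)\,\text{tr}(AB^{-1})=\text{tr}(A)\,\text{tr}(B)$ is correct and in fact tidier than the paper's argument: the paper (Theorem~\ref{weaklab}) diagonalises a specific element of $[G_\lambda,G_\lambda]$ and solves an ad hoc system in its eigenvalues, whereas your identity immediately forces every element of $[G_\lambda,G_\lambda]$ to be unipotent, after which both approaches converge on the Borel conclusion via the same normalizer argument (the paper's Proposition~\ref{abgrp2}).

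The dihedral part, however, has genuine gaps. First, the assertion that finite abelian subgroups of $\text{PGL}_2(\overline{\mathbb{F}}_\ell)$ are cyclic or Klein four is false: the unipotent radical contributes elementary abelian $\ell$-groups of arbitrary rank. You need to rule these out by observing that if $\overline{H}$ contains a nontrivial unipotent, then the normalizer of $\overline{H}$ already sits inside a Borel, contradicting the hypothesis; you never do this. Second, your exclusion of $\overline{H}\cong(\mathbb{Z}/2)^2$ via the non-embedding of $(\mathbb{Z}/2)^3$ only treats the case $\overline{g}_0\notin\overline{H}$. If $\overline{g}_0\in\overline{H}$ one must instead argue that the coset $g_0H$ then contains a scalar matrix, whose trace is nonzero in odd characteristic; this step is missing. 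Third, your final paragraph effectively abandons the structural argument and appeals to a Dickson case-check that is neither stated precisely nor carried out, and the characteristic~$2$ analysis is likewise deferred.

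By contrast, the paper (Theorem~\ref{weaklab0}) gives a self-contained elementary proof that avoids Dickson entirely: it picks a diagonalisable $A\in[G_\lambda,G_\lambda]\setminus\{\pm 1\}$ (whose existence follows from Proposition~\ref{abgrp2} once $G_\lambda$ is not Borel), diagonalises it, and shows by direct matrix computation that the trace-zero coset forces every element of $G_\lambda$ to be diagonal or anti-diagonal. The case $[G_\lambda,G_\lambda]=\{\pm1\}$ is handled separately (Proposition~\ref{pm1}), and the restrictions $n>1$, $\ell\nmid n$ come from Lemma~\ref{exclusive}. Your normalizer-of-a-torus idea is morally the same endpoint, but the paper reaches it without the unproved classification of abelian subgroups and without any residual case-checks.
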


\begin{stelling}\label{main3}
If a normalised cuspidal eigenform $f \in \mathcal{S}_{\bfk}( N, \chi )$ is $\lambda$-abelian for some $f$-proper class $x \in \mathbb{F}_{\lambda}$ but \emph{not} totally $\lambda$-abelian, then $x = 0$. This happens if and only if $\overline{G}_{\lambda} \cong D_n$ is a dihedral group, where $n = 2$ or $n \geq 3$ is odd, and where $\ell$ is odd and $\ell \nmid n$.
\end{stelling}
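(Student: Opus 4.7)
My strategy is to bootstrap off Theorem \ref{main2}. Since every $\lambda$-abelian form is weakly $\lambda$-abelian, that result already forces $x=0$ and pins down $\overline{G}_\lambda \cong D_n$ with $n>1$ and $\ell\nmid n$, so everything reduces to deciding, under these hypotheses, exactly when $f$ is additionally $\lambda$-abelian for $x=0$ in the strong iff-form $a_p(f)\equiv 0\bmod\lambda \iff p\bmod M \in S$.

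The first step is a Chebotarev reformulation. Let $L$ be the splitting field of $\rho_f^\lambda$, so $\mathrm{Gal}(L/\mathbb{Q})=G_\lambda$, and choose $M$ large enough that the maximal abelian subextension $L^{\mathrm{ab}} \subset \mathbb{Q}(\zeta_M)$ (possible by Kronecker--Weber). Working inside the fibre product $\mathrm{Gal}(L\cdot \mathbb{Q}(\zeta_M)/\mathbb{Q}) = G_\lambda \times_{G_\lambda^{\mathrm{ab}}} (\mathbb{Z}/M)^\times$ and applying Chebotarev, the existence of such an iff-pair $(M,S)$ is equivalent to the trace-zero locus
\[
T_0 := \{g \in G_\lambda : \operatorname{tr} g = 0\}
\]
being a union of cosets of $[G_\lambda, G_\lambda]$: a coset of $[G_\lambda, G_\lambda]$ projects onto a single class in $G_\lambda^{\mathrm{ab}}$ and hence onto a single residue class modulo $M$, so the iff-form forces the trace-zero property to be constant on each such coset.

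Next I identify $T_0$ in terms of $\overline{G}_\lambda = D_n$. In odd characteristic a matrix in $\mathrm{GL}_2(\mathbb{F}_\lambda)$ has trace zero iff it is non-scalar with square a scalar, i.e.\ has projective order $2$; in characteristic $2$ all scalars also have trace zero. Writing $\pi: G_\lambda \twoheadrightarrow D_n$ and
\[
T := \{x \in D_n : x^2 = 1,\ x \neq 1\} \cup \bigl(\{1\}\text{ if }\ell = 2\bigr),
\]
we have $T_0 = \pi^{-1}(T)$. Since $\ker \pi$ is central, $\pi$ sends $[G_\lambda, G_\lambda]$ onto $[D_n, D_n]$, from which it follows formally that $T_0$ is a union of $[G_\lambda, G_\lambda]$-cosets iff $T$ is a union of $[D_n, D_n]$-cosets. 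The theorem then comes down to a combinatorial check in $D_n = \langle r, s \mid r^n = s^2 = 1, srs = r^{-1}\rangle$ with $[D_n, D_n] = \langle r^2 \rangle$: the order-$2$ elements are the $n$ reflections $r^i s$, together with $r^{n/2}$ when $n$ is even. A case analysis then yields that $T$ is a union of cosets precisely when $n = 2$ (trivially), or $n \geq 3$ odd with $\ell$ odd (where $T = s\langle r\rangle$ is a single coset); whereas $n \geq 3$ odd with $\ell = 2$ adjoins the offending singleton $\{1\}$ to a coset of size $n \geq 3$, and $n \geq 4$ even (which forces $\ell$ odd since $\ell \nmid n$) has the singleton $\{r^{n/2}\}$ in a coset of size $n/2 \geq 2$ alongside the two full reflection cosets.

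The main obstacle will be carefully executing the Chebotarev reformulation: one must distinguish the iff-form of $\lambda$-abelianity, which corresponds to the saturation of $T_0$ under $[G_\lambda, G_\lambda]$, from the weaker properness of the image of $T_0$ in $G_\lambda^{\mathrm{ab}}$, which characterises only semi-$\lambda$-abelianity and would erroneously include the $n$-even cases. The remaining group-theoretic and combinatorial steps, along with the small extra bookkeeping required for $\ell = 2$, are then routine.
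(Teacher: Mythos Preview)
Your proposal is correct and follows essentially the same route as the paper's (first) proof. Both arguments invoke Theorem~\ref{main2} to force $x=0$ and $\overline{G}_\lambda\cong D_n$ with $n>1$, $\ell\nmid n$; both then reduce the question to whether the traceless locus is a union of commutator cosets (your fibre-product Chebotarev argument reproves the content of the paper's Lemma~\ref{red2} and Proposition~\ref{red1}); both transfer this to $\overline{G}_\lambda=D_n$ via Lemma~\ref{mat} and the surjection $[G_\lambda,G_\lambda]\twoheadrightarrow[D_n,D_n]$; and both finish with the same parity case split on $n$ using $[D_n,D_n]=\langle r^2\rangle$. Your treatment of the $\ell=2$ case is slightly more explicit than the paper's, but the substance is identical.
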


\begin{opm}
These results reflect the content of the Corollary following Lemma 1 in \cite{SD2}. In case the image $G_{\lambda}$ of $\rho_f^{\lambda}$ is contained in the Borel subgroup, if $N = 1$ and $K_f = \mathbb{Q}$, Swinnerton-Dyer showed that in fact, $a_p(f) \equiv p^{m} + p^{k-1-m} \bmod \ell$ for some integer $0 \leq m \leq k-1$. The introduction of level structure prevents us from drawing a similar conclusion in this generality, as illustrated by Example \ref{338exam}. In the dihedral case, Swinnerton-Dyer showed that $a_p(f) \equiv 0 \bmod \ell$ if and only if $p$ is not a square modulo $\ell$. We recover a more general version of this in Proposition \ref{goodmod} below.
\end{opm}

We will prove the above results through elementary arguments. In particular, their proofs will not rely on Dickson's classification of the possible subgroups of $\text{PGL}_2( \mathbb{F}_{\lambda} )$; see Subsection \ref{dicksonsec} for a brief discussion. 

From Ribet's work on the Galois images of non-CM modular forms \cite{ribet}, it follows that a given eigenform without complex multiplication can only be (weakly) $\lambda$-abelian for a finite number of different primes $\lambda \subset \mathcal{O}_f$. In case it is, however, we can bound the size of the modulus that appears in Definition \ref{lab}. For a set of rational primes $S$ and an integer $n$, let $\text{gcd}(n,S)$ denote the part of $n$ supported at the primes in $S$. In addition, we let $\text{exp}(G) = \text{min} \{ n \in \mathbb{N} \mid g^n = 1 \text{ for all } g \in G \}$ denote the exponent of a finite group $G$, which is a divisor of $\# G$. We then have the following.

\begin{prop}\label{mod}
Let $f \in \mathcal{S}_{\bfk}( N, \chi )$ be a normalised eigenform and let $\lambda \subset \mathcal{O}_f$ be a prime. Let $S$ denote the set of all primes dividing $N \ell$. Then:
\begin{itemize}
\item If $f$ is totally $\lambda$-abelian, then the value of $a_p(f) \bmod \lambda$ is fully determined by the value of $p \bmod M$, where $M$ can be taken to be a positive divisor of $\emph{rad}( N \ell ) \cdot \emph{gcd}\big( 2 \emph{ exp}( \emph{im}( \rho_f^{\emph{ss}, \lambda} ) ), S \big)$.
\item If $f$ is weakly $\lambda$-abelian for $0 \in \mathbb{F}_{\lambda}$ because $\overline{G}_{\lambda} \cong D_n$ is a dihedral group for some $n > 1$ with $\ell \nmid n$, then there exists a divisor $M \in \mathbb{Z}$ of $\emph{rad}( N \ell ) \cdot \emph{gcd}( 2, N \ell )^2$ such that for $p \nmid N \ell$,
\[
\left( \frac{M}{p} \right) = -1 \implies a_p( f ) \equiv 0 \bmod \lambda.
\]
If $N\ell$ is odd, then $M \equiv 1 \bmod 4$. If $\ell \neq 2$ and $n \geq 3$ is odd, then the above is even an equivalence.
\item If $f$ is $\lambda$-abelian for $0 \in \mathbb{F}_{\lambda}$ because $\overline{G}_{\lambda} \cong D_2 = C_2 \times C_2$ and $\ell$ is odd, then there exist divisors $M_1, M_2 \in \mathbb{Z}$ of $\emph{rad}( N \ell ) \cdot \emph{gcd}( 2, N )^2$ such that for $p \nmid N \ell$,
\[
a_p( f ) \equiv 0 \bmod \lambda \iff \left( \frac{M_1}{p} \right) = -1 \quad \text{or} \quad \left( \frac{M_2}{p} \right) = -1.
\]
\end{itemize}
\end{prop}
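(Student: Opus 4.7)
For the first bullet, Theorem \ref{main1} places (a conjugate of) $G_{\lambda}$ inside the Borel subgroup of $\text{GL}_2(\mathbb{F}_{\lambda^2})$, giving a decomposition $\rho_f^{\text{ss}, \lambda} \cong \chi_1 \oplus \chi_2$ for two characters $\chi_1, \chi_2 : G_{\mathbb{Q}} \to \mathbb{F}_{\lambda^2}^{\times}$, so that $a_p(f) \equiv \chi_1(\text{Frob}_p) + \chi_2(\text{Frob}_p) \bmod \lambda$ for $p \nmid N\ell$. Each $\chi_i$ is abelian and unramified outside $S$, so Kronecker--Weber makes it factor through $(\mathbb{Z}/m_i\mathbb{Z})^{\times}$ for some conductor $m_i$ supported on $S$, and the desired modulus is $M \colonequals \text{lcm}(m_1, m_2)$. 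The main step is then to establish the local conductor bound
\[
v_p(m_i) \leq 1 + v_p(2e) \quad \text{for each } p \in S,
\]
with $e \colonequals \text{exp}(\text{im}(\rho_f^{\text{ss},\lambda}))$, which I would obtain from the tame/wild decomposition of $\chi_i|_{I_p}$ combined with $\text{ord}(\chi_i) \mid e$; the factor of $2$ accounts for the fact that a character of $(\mathbb{Z}/2^n\mathbb{Z})^{\times}$ of order $2^a$ may have conductor exponent up to $a+2$ rather than $a+1$.

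For the second and third bullets, Theorem \ref{main2} provides an index-$2$ cyclic subgroup $C_n \subset \overline{G}_{\lambda} \cong D_n$ whose preimage in $G_{\mathbb{Q}}$ cuts out a quadratic extension $K/\mathbb{Q}$. Since $\rho_f^{\lambda}$ is irreducible and its restriction to $G_K$ has abelian image (of order coprime to $\ell$, hence simultaneously diagonalizable over $\mathbb{F}_{\lambda^2}$), Frobenius reciprocity yields an isomorphism $\rho_f^{\lambda} \otimes \mathbb{F}_{\lambda^2} \cong \text{Ind}_{G_K}^{G_{\mathbb{Q}}}(\psi)$ for a character $\psi : G_K \to \mathbb{F}_{\lambda^2}^{\times}$. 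A direct trace computation then gives $a_p(f) \equiv 0 \bmod \lambda$ for every $p$ inert in $K$, since such $\rho_f^{\lambda}(\text{Frob}_p)$ is anti-diagonal in the induced basis. Standard ramification bounds for quadratic extensions unramified outside $N\ell$ yield $\text{disc}(K) \mid \text{rad}(N\ell) \cdot \gcd(2, N\ell)^2$, with $\text{disc}(K) \equiv 1 \bmod 4$ when $N\ell$ is odd, and taking $M \colonequals \text{disc}(K)$ completes the second bullet.

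For the equivalence in the odd $n \geq 3$ case with $\ell \neq 2$, I would observe that for $p$ split in $K$ with $\tau \colonequals \text{Frob}_{\mathfrak{p}_1}$, one has $a_p(f) = \psi(\tau) + \psi^{\sigma}(\tau)$, which vanishes iff $(\psi/\psi^{\sigma})(\tau) = -1$; but the image of $\psi/\psi^{\sigma}$ corresponds under projectivization to the image of $G_K$ in $\overline{G}_{\lambda}$, namely $C_n$, which has odd order and hence cannot contain $-1$. For the third bullet with $\overline{G}_{\lambda} \cong D_2$ and $\ell$ odd, every non-identity element has order $2$ and any lift $A \in \text{GL}_2(\overline{\mathbb{F}}_{\lambda})$ satisfies $A^2 = cI$ with $c \neq 0$, giving distinct eigenvalues $\pm\sqrt{c}$ (using $\ell \neq 2$) and hence trace $0$. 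Thus $a_p(f) \equiv 0 \bmod \lambda$ iff $\overline{\rho}_f^{\lambda}(\text{Frob}_p) \neq 1$, iff $p$ fails to split completely in the compositum of the three quadratic subfields $K_1, K_2, K_3$ cut out by the index-$2$ subgroups of $D_2$; the relation $\chi_1 \chi_2 \chi_3 = 1$ among their Kronecker characters then rephrases this as ``$(M_1/p) = -1$ or $(M_2/p) = -1$'' with $M_i = \text{disc}(K_i)$. The main obstacle throughout is the careful ramification bookkeeping at $p = 2$ and $p = \ell$, responsible for the factors $\gcd(2e, S)$ and $\gcd(2, N\ell)^2$ in the stated bounds.
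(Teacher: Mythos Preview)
Your proof is correct and follows essentially the same route as the paper: both establish the conductor bound via local Kronecker--Weber (the paper packages this as a standalone Corollary~\ref{cond2} and applies it to the abelian extension cut out by $\mathrm{im}(\rho_f^{\mathrm{ss},\lambda})$, while you bound the conductors of $\chi_1,\chi_2$ individually with the same local estimate), and both identify the relevant quadratic or biquadratic field from the index-$2$ subgroup(s) of $\overline{G}_\lambda \cong D_n$. Your phrasing of the dihedral case through the induced representation $\mathrm{Ind}_{G_K}^{G_{\mathbb{Q}}}(\psi)$ and the character $\psi/\psi^{\sigma}$ is the representation-theoretic counterpart of the paper's group-theoretic argument via Lemma~\ref{mat} (traceless $\Leftrightarrow$ projective involution) and Lemma~\ref{com3}, but the underlying content is the same.
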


In a common special case, we can be more precise.
\begin{prop}\label{goodmod}
Let $f \in \mathcal{S}_{\bfk}( \Gamma_0(N) )$ be a normalised eigenform and let $\lambda \subset \mathcal{O}_f$ be a prime with residue field $\mathbb{F}_{\lambda}$ of odd characteristic $\ell$. Suppose that $\bfk$ is even and that $[ \mathbb{F}_{\lambda} : \mathbb{F}_{\ell} ]$ is odd. If $\overline{G}_{\lambda} \cong D_n$ is a dihedral group for some odd integer $n$, then for $p \nmid N\ell$,
\[
a_p(f) \equiv 0 \bmod \lambda \iff \left( \frac{p}{\ell} \right) = -1.
\]
\end{prop}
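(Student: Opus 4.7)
The plan is to combine the dihedral structure already extracted in Theorem~\ref{main3} and Proposition~\ref{mod} with an analysis of Frobenius at inert primes, in order to show that the quadratic field $F$ over which $\rho_f^\lambda$ is induced equals $\mathbb{Q}(\sqrt{\ell^*})$. Under the standing hypotheses, those earlier results produce a unique quadratic $F/\mathbb{Q}$ with $\rho_f^{\text{ss}, \lambda} \cong \text{Ind}_{G_F}^{G_\mathbb{Q}}\psi$ for a character $\psi$ of $G_F$, and satisfying $a_p(f) \equiv 0 \bmod \lambda$ if and only if $p$ is inert in $F$, for $p \nmid N\ell$. Once $F = \mathbb{Q}(\sqrt{\ell^*})$ is established, the claim follows from quadratic reciprocity, since $p$ is inert in $\mathbb{Q}(\sqrt{\ell^*})$ iff $\left(\tfrac{\ell^*}{p}\right) = -1$ iff $\left(\tfrac{p}{\ell}\right) = -1$. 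The real content is therefore the identification of $F$.

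The heart of the argument is a structural observation about the image $G_\lambda$. For $p \nmid N\ell$ inert in $F$, set $A_p := \rho_f^\lambda(\text{Frob}_p) \in \text{GL}_2(\mathbb{F}_\lambda)$. Triviality of the nebentypus gives $\det A_p = p^{\bfk-1}$, and since $A_p$ lifts a reflection of $D_n$, one has $\text{tr}(A_p) = 0$, so $A_p^2 = -p^{\bfk-1}\cdot I$. Because $n$ is odd, all reflections of $D_n$ form a single conjugacy class, so any two reflection lifts $R, R' \in G_\lambda$ are related by $R' = g R g^{-1} \cdot z$ for some $g \in G_\lambda$ and some scalar $z \in \mathbb{F}_\lambda^\times$. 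Squaring yields $(R')^2 = z^2 R^2$, so the square class of the scalar $R^2$ in $\mathbb{F}_\lambda^\times / (\mathbb{F}_\lambda^\times)^2$ is an invariant of $G_\lambda$, independent of the choice of reflection lift. Specialising to $R = A_p$ shows that the square class of $-p^{\bfk-1}$ in $\mathbb{F}_\lambda^\times$ is the same for every $p \nmid N\ell$ inert in $F$. Since $\bfk - 1$ is odd and $[\mathbb{F}_\lambda:\mathbb{F}_\ell]$ is odd (so that squareness in $\mathbb{F}_\lambda^\times$ reduces to squareness in $\mathbb{F}_\ell^\times$ for elements of $\mathbb{F}_\ell$), the Legendre computation
\[
\left(\tfrac{-p^{\bfk-1}}{\ell}\right) = \left(\tfrac{-1}{\ell}\right) \left(\tfrac{p}{\ell}\right)^{\bfk-1} = \left(\tfrac{-1}{\ell}\right) \left(\tfrac{p}{\ell}\right)
\]
shows that $\left(\tfrac{p}{\ell}\right)$ takes a single value $c^\ast \in \{\pm 1\}$ across all such inert $p$.

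Both $\{p : p \text{ inert in } F\}$ and $\{p : \left(\tfrac{p}{\ell}\right) = c^\ast\}$ have Dirichlet density $\tfrac{1}{2}$, so the inclusion of the former in the latter forces equality. The possibility $c^\ast = +1$ is excluded since it would force the product of the quadratic characters cutting out $F$ and $\mathbb{Q}(\sqrt{\ell^*})$ to equal $-1$ on a density-one set of Frobenius elements, which is impossible for a Dirichlet character; hence $c^\ast = -1$ and $F = \mathbb{Q}(\sqrt{\ell^*})$. The main obstacle in this plan is the square-class invariance of $R^2$ across reflection lifts, which rests essentially on $n$ being odd so that $D_n$ has a single reflection conjugacy class; the odd parity of $\bfk - 1$ and of $[\mathbb{F}_\lambda:\mathbb{F}_\ell]$ then enter cleanly to transport the square-class question from $\mathbb{F}_\lambda^\times$ to the Legendre symbol in $\mathbb{F}_\ell$.
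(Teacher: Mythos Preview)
Your argument is correct, but it takes a genuinely different route from the paper's.

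The paper proceeds more directly and avoids both Proposition~\ref{mod} and any density argument. It observes that since $\chi$ is trivial, $\det\rho_f^{\lambda}(\text{Frob}_{\mathfrak p}) = p^{\bfk-1}$ lies in $\mathbb{F}_\ell^\times$, so $\det$ is $\mathbb{F}_\ell^\times$-valued on all of $G_\lambda$. Moreover, any scalar matrix $\alpha\cdot\text{id}\in G_\lambda$ has $\alpha^2\in\mathbb{F}_\ell^\times$, and the odd-degree hypothesis $[\mathbb{F}_\lambda:\mathbb{F}_\ell]$ forces $\alpha\in\mathbb{F}_\ell^\times$; hence every scalar has \emph{square} determinant in $\mathbb{F}_\ell^\times$. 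Thus $\det$ descends to a well-defined map $\overline{\det}:\overline{G}_\lambda\to\mathbb{F}_\ell^\times/(\mathbb{F}_\ell^\times)^2$, surjective because $\bfk-1$ is odd. Its kernel is an index-$2$ subgroup of $\overline{G}_\lambda\cong D_n$; since $n$ is odd, $D_n$ has a \emph{unique} index-$2$ subgroup, namely the rotation subgroup $[D_n,D_n]$. Hence $\text{Frob}_{\mathfrak p}$ projects to a reflection (equivalently $a_p(f)\equiv 0$) precisely when $(p/\ell)=-1$.

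Your approach instead extracts the quadratic field $F$ from Proposition~\ref{mod}, isolates the square-class invariant of $R^2$ across reflection lifts (using that reflections in $D_n$, $n$ odd, form a single conjugacy class), and then identifies $F$ with $\mathbb{Q}(\sqrt{\ell^*})$ via a density comparison and a character argument ruling out $c^*=+1$. Your ``square class of $R^2$ is invariant under scalars'' is essentially dual to the paper's ``scalars have square determinant'', while your density-plus-character step replaces the paper's one-line uniqueness of the index-$2$ subgroup. The paper's route is shorter and more self-contained; yours has the merit of making the reflection conjugacy class and the concrete invariant of $G_\lambda$ explicit. A minor remark: the induced-representation description $\rho_f^{\text{ss},\lambda}\cong\text{Ind}_{G_F}^{G_{\mathbb Q}}\psi$ is not established in the paper and is never actually used in your argument; the equivalence from Proposition~\ref{mod} is all you need.
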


This result recovers the observations from Example \ref{introexam} and the result by Swinnerton-Dyer in \cite{SD2}. In addition, other than the image of $\overline{\rho}_f^{\lambda}$ being a dihedral group $D_n$ with $n$ odd, all conditions from Proposition \ref{goodmod} are automatically satisfied in the case of rational newforms of weight 2 for $\Gamma_0(N)$, allowing it to be applied to the setting of modular forms associated with rational elliptic curves through Wiles's modularity theorem, which we will discuss in Section \ref{exam}. In addition, we will explain in Example \ref{counter} how this statement may fail in case $n$ is even. This does not contradict the result from \cite{SD2}, in whose setting the non-existence of a $C_2 \times C_2$-extension of $\mathbb{Q}$ ramified only at one odd prime $\ell$ prevents this case from occurring outright. 

Finally, the classification of all instances of being semi-$\lambda$-abelian cannot be expressed as succinctly as Theorems \ref{main1}, \ref{main2} and \ref{main3}, and makes implicit reference to Dickson's result.

\begin{stelling}\label{main4}
Let $f \in \mathcal{S}_{\bfk}( N, \chi )$ be a normalised cuspidal eigenform and let $c_{\lambda} \in [0,1]$ denote the density of primes $p$ for which $a_p(f) \equiv 0 \bmod \lambda$. The table below describes for all the possible projective images $\overline{G}_{\lambda}$ of $\overline{\rho}_f^{\lambda}$ whether $f$ is semi-$\lambda$-abelian for all $f$-proper classes in $\mathbb{F}_{\lambda}^{\times}$, whether $f$ is semi-$\lambda$-abelian for $0 \in \mathbb{F}_{\lambda}$, and the value of $c_{\lambda}$. We make the following remarks: \\
$-$ If $\overline{G}_{\lambda} \cong \emph{PSL}_2( k )$ for a field $k \subset \mathbb{F}_{\lambda}$, we assume $\ell \neq 2$. We write $\# k = q$, also when $\overline{G}_{\lambda} \cong \emph{PGL}_2( k )$. \\
$-$ In the Borel case, we also write $\# \mathbb{F}_{\lambda} = q$ and if $\ell \neq 2$, then $d$ must be even. \\
$-$ In case of $A_4$ or $S_4$ with $\emph{det}( G_{\lambda} ) = \mathbb{F}_3^{\times}$, the exceptions are precisely $\{ \pm 1 \} = \mathbb{F}_3^{\times}$. \\
$-$ Finally, condition $( \star )$ is the following:
\begin{itemize}
\item $\ell = 2$ and $\emph{det}( G_{\lambda} ) \subset \mathbb{F}_4^{\times}$; then $f$ is not semi-$\lambda$-abelian for any $x \in \mathbb{F}_4^{\times}$;
\item $\ell = 3$ and $\emph{det}( G_{\lambda} ) = \{ \pm 1 \}$; then $f$ is not semi-$\lambda$-abelian for four classes $x \in \mathbb{F}_9^{\times} \setminus \mathbb{F}_3^{\times}$; 
\item $\ell = 5$ and $\emph{det}( G_{\lambda} ) = \{ \pm  1\}$; then $f$ is not semi-$\lambda$-abelian for any $x \in \mathbb{F}_5^{\times}$;
\item $\ell = 29$ and $\emph{det}( G_{\lambda} ) = \{ \pm 1 \}$; then $f$ is not semi-$\lambda$-abelian for $x \in \{ \pm 2, \pm 5 \} \subset \mathbb{F}_{29}^{\times}$.
\end{itemize}
\end{stelling}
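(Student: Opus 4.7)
My approach starts by converting semi-$\lambda$-abelianity into a question about the image group $G_\lambda$. Set $T_x \colonequals \{ g \in G_\lambda : \mathrm{tr}(g) = x \}$, a conjugation-invariant subset; Chebotarev's density theorem applied to $\rho_f^\lambda$ gives $c_\lambda = \# T_0 / \# G_\lambda$ directly. Since by Kronecker--Weber every finite abelian quotient of $G_{\mathbb{Q}}$ is cyclotomic, any congruence $p \bmod M \in S_x$ corresponds to $\rho_f^\lambda(\mathrm{Frob}_p)$ landing in the preimage in $G_\lambda$ of some subset of an abelian quotient. Running Chebotarev on the joint representation into $G_\lambda \times (\mathbb{Z}/M\mathbb{Z})^\times$, I would deduce that $f$ is semi-$\lambda$-abelian for $x$ if and only if the image of $T_x$ in the abelianization $G_\lambda^{\mathrm{ab}} \colonequals G_\lambda/[G_\lambda, G_\lambda]$ is a proper subset, equivalently that some coset of $[G_\lambda, G_\lambda]$ is disjoint from $T_x$.

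With this reformulation the theorem reduces to a case-by-case calculation across Dickson's list. The Borel and irreducible dihedral rows are already handled by Theorems \ref{main1}, \ref{main2} and \ref{main3}; the density $c_\lambda = 1/2$ in the dihedral non-Borel case follows from identifying the trace-zero locus with the inversion coset of the index-two cyclic subgroup. For the remaining projective images $A_4, S_4, A_5, \mathrm{PSL}_2(k), \mathrm{PGL}_2(k)$, the commutator $[G_\lambda, G_\lambda]$ is essentially the kernel of the determinant (modified by a sign character in the $S_4$ case), so $G_\lambda^{\mathrm{ab}}$ is concretely described in terms of $\det(G_\lambda)$. For each such image and each $x$, I would enumerate the pairs $(\mathrm{tr}(g), \det(g))$ realised in $G_\lambda$ using explicit matrix lifts of the embeddings $A_n \hookrightarrow \mathrm{PGL}_2(\mathbb{F}_\ell)$ together with the standard conjugacy-class tables of $\mathrm{GL}_2(\mathbb{F}_q)$, and check whether $T_x$ surjects onto $G_\lambda^{\mathrm{ab}}$; reading off $\# T_0$ during the enumeration gives $c_\lambda$.

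The main obstacle, and the source of condition $(\star)$, is showing that outside the listed exceptions this surjectivity actually holds for every $f$-proper $x$. Each listed exception corresponds to a characteristic-specific trace identity that forces elements of a given determinant to have trace in a proper subset of $\mathbb{F}_\lambda^\times$: in characteristic $2$, Cayley--Hamilton collapses to $\mathrm{tr}(g)^2 = \mathrm{tr}(g^2)$, so if $\det(G_\lambda) \subset \mathbb{F}_4^\times$ has order dividing $3$ the nonzero traces are pinned by the action of Frobenius on a cube-root subgroup; the cases $\ell \in \{3, 5, 29\}$ with $\det(G_\lambda) = \{\pm 1\}$ come from the exceptional projective images $A_4, S_4$ at $\ell = 3$ and $A_5 \hookrightarrow \mathrm{PGL}_2(\mathbb{F}_\ell)$ at $\ell \in \{5, 29\}$, where the trace of a lift of a projective element of order $3$ or $5$ satisfies a specific polynomial equation over $\mathbb{F}_\ell$ and thereby pins the trace to the listed residues. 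Verifying that these are the only exceptions is a finite but intricate check: for the $\mathrm{PSL}_2(k), \mathrm{PGL}_2(k)$ families a direct surjectivity argument succeeds once $\# k$ exceeds a small uniform bound, and the finitely many remaining exceptional groups (including the $A_5 \hookrightarrow \mathrm{PGL}_2(\mathbb{F}_{29})$ case that produces the $\{\pm 2, \pm 5\}$ exception) are dispatched via explicit conjugacy-class enumeration.
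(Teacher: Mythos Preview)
Your reduction via Chebotarev and Kronecker--Weber to the coset criterion ``some coset of $[G_\lambda,G_\lambda]$ avoids trace $x$'' is exactly what the paper does (Lemma~\ref{red3}), and the plan to run through Dickson's list matches the paper's structure. But several specifics are off.

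First, you have misread condition $(\star)$: all four of its bullets pertain to the $A_5$ row only. The $\ell=3$ item in $(\star)$ is about $A_5\hookrightarrow\mathrm{PGL}_2(\mathbb{F}_9)$, not about $A_4$ or $S_4$; those two rows carry a separate exception, namely $\det(G_\lambda)=\mathbb{F}_3^\times$ (the full group, not $\{\pm 1\}$). These are genuinely different conditions and your explanation of the exceptions conflates them.

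Second, your claim that for the exceptional images ``$[G_\lambda,G_\lambda]$ is essentially the kernel of the determinant'' fails for $A_4$: there $[\overline{G}_\lambda,\overline{G}_\lambda]\cong C_2\times C_2$ has index $3$ in $A_4$, so $G_\lambda^{\mathrm{ab}}$ has a genuine nontrivial piece beyond $\det(G_\lambda)$. Your proposed enumeration of pairs $(\mathrm{tr}(g),\det(g))$ would therefore not detect all cosets in that case. The paper instead computes the trace image on $[G_\lambda,G_\lambda]$ explicitly for each exceptional type (Lemmas~\ref{trlem2}--\ref{trlem4}) and then argues with scalar-coset representatives.

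Third, your finiteness argument for the $A_5$ exceptions is too vague. The paper's device is to use oddness of $\rho_f^\lambda$: this forces a scalar $\alpha\cdot\mathrm{id}\in G_\lambda$ with $\alpha$ of multiplicative order $4$, so one only needs $T\cap\alpha T\neq\varnothing$ for $T=\{\pm 1,\pm 2,\pm\varphi,\pm(\varphi-1)\}$ and such $\alpha$, which pins down finitely many $\ell$ immediately. Without this you have no uniform bound.

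Finally, two smaller slips: the dihedral density is not always $1/2$ (the table gives $1/2+1/2n$ when $n$ is even, and also when $\ell=2$), and Theorems~\ref{main1}--\ref{main3} do not by themselves fill in the ``semi on $\mathbb{F}_\lambda^\times$'' column for the dihedral rows --- a short separate argument (as in the paper's proof) is still needed there.
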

\renewcommand{\arraystretch}{1.25}
\begin{table}[h]
\centering
\begin{tabular}{|c||c|c|c|}
\hline
Projective image $\overline{G}_{\lambda}$ & Semi-$\lambda$-abelian on $\mathbb{F}_{\lambda}^{\times}$ & Semi for $0$ & $c_{\lambda}$ \\ \hline \hline
$\text{PGL}_2( \mathbb{F}_{\lambda} )$ and $\# \mathbb{F}_{\lambda} > 3$ & Not for any class & No & $q/(q-1)(q+1)$ \\ \hline
$\text{PSL}_2( \mathbb{F}_{\lambda} )$ and $\# \mathbb{F}_{\lambda} > 3$ & Not for any class & No & $1/(q + \epsilon )$ where $\epsilon = (-1)^{\frac{q+1}{2}}$ \\ \hline
$\text{PGL}_2( k )$ for $k \subsetneq \mathbb{F}_{\lambda}$ & Yes if $\text{det}( G_{\lambda} ) \not\subset k^{\times}$ & No & $q/(q-1)(q+1)$ \\ \hline
$\text{PSL}_2( k )$ for $k \subsetneq \mathbb{F}_{\lambda}$ & Yes if $\text{det}( G_{\lambda} ) \not\subset k^{\times}$ & No & $1/(q + \epsilon )$ where $\epsilon = (-1)^{\frac{q+1}{2}}$ \\ \hline
Contained in Borel & Yes & Yes & $0$ or $1/d$ for $d \mid q \pm 1$ \\ \hline
$D_n$ for $n$ odd and $\ell \nmid n$ & Yes & Yes if $\ell \neq 2$ & $1/2$ if $\ell \neq 2$, else $1/2 + 1/2n$ \\ \hline
$D_n$ for $n$ even and $\ell \nmid n$ & Yes & Yes & $1/2 + 1/2n$ \\ \hline
Isomorphic to $A_4$ & Yes unless $\text{det}( G_{\lambda} ) = \mathbb{F}_3^{\times}$ & Yes & $1/4$ if $\ell \neq 2$, else $1/3$ \\ \hline
Isomorphic to $S_4$ & Yes unless $\text{det}( G_{\lambda} ) = \mathbb{F}_3^{\times}$ & No & $3/8$ if $\ell \neq 2$, else $5/12$ \\ \hline
Isomorphic to $A_5$ & Yes unless $(\star)$ & No & $1/4$ if $\ell \neq 2$, else $4/15$ \\ \hline
\end{tabular}
\end{table}
\renewcommand{\arraystretch}{1}

We will illustrate the above results in Section \ref{exam}. 

\begin{opm}
The proofs below will rely on various occasions on explicit arithmetic with $2 \times 2$ matrices. At times, similar considerations were needed in the work of Faber in \cite{Faber}, or even in the classical work of Dickson and various other authors. The partly expository nature of this paper motivates us to give elementary proofs whenever this is possible, and to avoid Dickson's theorem unless it is really needed.

To stress the distinction between a result that is well known, and a result that contains more original work due to its focus on the specific interplay between commutator subgroups and the trace map, we have consistently labeled the former kind as a \emph{Lemma} in the following section, and the latter as a \emph{Proposition}. 
\end{opm}

\section{Proofs}\label{proof1}

We study the Galois representation $\rho_f^{\lambda} : G_{\mathbb{Q}} \to \text{GL}_2( \mathbb{F}_{\lambda} )$. Its kernel corresponds through Galois theory to some finite Galois extension $L_{\lambda}$ of $\mathbb{Q}$ which is ramified only at primes dividing $N \ell$. Its Galois group $\text{Gal}( L_{\lambda} / \mathbb{Q} )$ can be identified with the image $G_{\lambda}$ of $\rho_f^{\lambda}$. Generally the extension $L_{\lambda} / \mathbb{Q}$ is non-abelian, but it contains a maximal abelian extension, which we will denote by $Z_{\lambda}$. Its Galois group over $\mathbb{Q}$ can be identified with $G_{\lambda} / [ G_{\lambda}, G_{\lambda} ]$, where $[G_{\lambda}, G_{\lambda} ]$ denotes the commutator (or derived) subgroup of $G_{\lambda}$. 

The representation $\rho_f^{\lambda}$ identifies $\text{Gal}( L_{\lambda} / \mathbb{Q} ) \cong G_{\lambda}$ with its image inside $\text{GL}_2( \mathbb{F}_{\lambda})$, and equips it with a trace map coming from the usual
\[
\text{tr} : \text{GL}_2( \mathbb{F}_{\lambda}) \to \mathbb{F}_{\lambda},
\]
which is independent of the choice of basis. This trace map is a homomorphism when extended to the group $\text{Mat}_2( \mathbb{F}_{\lambda} )$ of $2\times2$ matrices over $\mathbb{F}_{\lambda}$ with addition, but on the multiplicative group $\text{GL}_2( \mathbb{F}_{\lambda} )$ it is somewhat ill-behaved, as it partitions this group into $\# \mathbb{F}_{\lambda}$ subsets that possess no clear multiplicative structure.

\begin{prop}\label{red1}
Let $f \in \mathcal{S}_{\bfk}( N, \chi )$ be a normalised eigenform and let $\lambda \subset \mathcal{O}_f$ be a prime. Then $f$ is weakly $\lambda$-abelian for some $x \in \mathbb{F}_{\lambda}$ if and only if there exists a coset of $[G_{\lambda}, G_{\lambda}]$ in $G_{\lambda}$ on which the trace is constantly equal to $x$.
\end{prop}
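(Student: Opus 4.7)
The proof rests on two facts: for $p \nmid N\ell$, we have $a_p(f) \equiv \text{tr}(\rho_f^{\lambda}(\text{Frob}_p)) \bmod \lambda$, and Kronecker-Weber translates congruence conditions on $p$ modulo $M$ into Frobenius conditions in the abelian extension $\mathbb{Q}(\zeta_M)/\mathbb{Q}$. This abelianness is the crucial point: any congruence condition on $p$ can only detect the image of $\text{Frob}_p$ in an abelian quotient of $G_{\lambda}$, and the maximal such quotient is exactly $G_{\lambda}/[G_{\lambda}, G_{\lambda}] = \text{Gal}(Z_{\lambda}/\mathbb{Q})$. Since cosets of $[G_{\lambda}, G_{\lambda}]$ are stable under conjugation (so the trace is well-defined on each), the statement is essentially a direct translation once this dictionary is set up.

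For the "if" direction, suppose $C = g_0[G_{\lambda}, G_{\lambda}]$ is a coset on which the trace is constantly $x$. Since $Z_{\lambda}$ is abelian over $\mathbb{Q}$ and unramified outside $N\ell$, Kronecker-Weber provides a positive integer $M$ with all prime factors dividing $N\ell$ such that $Z_{\lambda} \subset \mathbb{Q}(\zeta_M)$. This induces a surjection $\phi : (\mathbb{Z}/M\mathbb{Z})^{\times} \twoheadrightarrow \text{Gal}(Z_{\lambda}/\mathbb{Q}) = G_{\lambda}/[G_{\lambda}, G_{\lambda}]$, and setting $S_x := \phi^{-1}(g_0[G_{\lambda}, G_{\lambda}])$ yields a non-empty subset with the property that for $p \nmid N\ell$, the condition $p \bmod M \in S_x$ forces $\rho_f^{\lambda}(\text{Frob}_p) \in C$, whence $a_p(f) \equiv x \bmod \lambda$.

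For the "only if" direction, suppose $f$ is weakly $\lambda$-abelian for $x$ witnessed by modulus $M$ and non-empty $S_x$. After replacing $M$ by a suitable multiple (and $S_x$ by its preimage), we may assume $Z_{\lambda} \subset \mathbb{Q}(\zeta_M)$, so that $L_{\lambda} \cap \mathbb{Q}(\zeta_M) = Z_{\lambda}$ exactly. Consequently, the Galois group $H$ of the compositum $L_{\lambda} \cdot \mathbb{Q}(\zeta_M)$ is identified with the fibre product of $G_{\lambda}$ and $(\mathbb{Z}/M\mathbb{Z})^{\times}$ over their common quotient $G_{\lambda}/[G_{\lambda}, G_{\lambda}]$. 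Fix any $a \in S_x$; for every $g \in G_{\lambda}$ lying in the coset $\phi(a)$, the pair $(g, a)$ is in $H$, so Chebotarev produces infinitely many primes $p$ unramified in the compositum with Frobenius in the conjugacy class of $(g, a)$. Such $p$ satisfy $p \equiv a \bmod M$, hence $a_p(f) \equiv x \bmod \lambda$ by hypothesis, forcing $\text{tr}(g) = x$. Thus trace is constantly $x$ on the coset $\phi(a)$ of $[G_{\lambda}, G_{\lambda}]$ in $G_{\lambda}$.

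The main technical subtlety is the fibre product identification for $H$, which requires enlarging $M$ so that $Z_{\lambda}$ is captured \emph{exactly} by $L_{\lambda} \cap \mathbb{Q}(\zeta_M)$ rather than merely being contained in it; this ensures that every element of each coset $\phi(a)$ can actually be realised as some $\rho_f^{\lambda}(\text{Frob}_p)$ with $p \equiv a \bmod M$, rather than only a proper subset. Once this point is handled, the rest of the argument is a routine combination of Chebotarev's density theorem with the Galois-theoretic interpretation of the condition $a_p(f) \equiv x \bmod \lambda$.
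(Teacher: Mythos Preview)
Your proof is correct and follows essentially the same approach as the paper: both directions rest on Deligne's trace formula and the identification, via Kronecker--Weber, of congruence conditions on $p$ with cosets of $[G_{\lambda},G_{\lambda}]$ in $G_{\lambda}$. The paper's version is terser, simply asserting that ``the only information that is determined by a congruence condition on $p$ is the coset of $[G_{\lambda},G_{\lambda}]$ that $\mathrm{Frob}_{\mathfrak{p}}$ belongs to''; your fibre-product-plus-Chebotarev argument is exactly what makes that assertion rigorous, and your observation that enlarging $M$ forces $L_{\lambda}\cap\mathbb{Q}(\zeta_M)=Z_{\lambda}$ (since any abelian subextension of $L_{\lambda}$ lies in $Z_{\lambda}$) is the precise justification the paper leaves implicit.
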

\begin{proof}
From Deligne's theorem, it is an immediate consequence that
\[
a_p(f) \equiv x \bmod \lambda \iff \text{tr}\big( \rho_{f, \lambda}( \text{Frob}_{\mathfrak{p}} ) \big) \equiv x \bmod \lambda \iff \text{tr}\left( \rho_f^{\lambda}( \text{Frob}_{\mathfrak{p}} ) \right) = x \in \mathbb{F}_{\lambda},
\]
where $\mathfrak{p} \subset \overline{ \mathbb{Z} }$ denotes a prime lying over a rational prime $p \nmid N\ell$. By the Kronecker-Weber theorem, the class of the Frobenius elements associated with unramified primes $p$ in the abelian extension $Z_{\lambda} / \mathbb{Q}$ can be expressed as a congruence condition on $p$. Conversely, by class field theory, if the splitting behaviour of $p$ in some field extension is determined by a congruence condition on $p$, then this extension must be abelian. In other words, the only information that is determined by a congruence condition on $p$ is the coset of $[G_{\lambda}, G_{\lambda}]$ inside $G_{\lambda}$ that $\text{Frob}_{\mathfrak{p}}$ belongs to. Therefore, if $f$ is weakly $\lambda$-abelian for some $x \in \mathbb{F}_{\lambda}$, then there must be a coset of $[G_{\lambda}, G_{\lambda}]$ on which this trace is constantly equal to $x$. The converse is now clear.
\end{proof}

\begin{opm}
In the remainder of this section, $k$ will always denote a finite field of characteristic $\ell$. In addition, we will let $K / k$ denote its unique quadratic extension and we denote $q = \# k$. The usage of these fields as opposed to $\mathbb{F}_{\lambda}$ and $\mathbb{F}_{\lambda^2}$ is to stress when certain statements are of more a general nature as opposed to tailored to the setting of the Galois representations associated with normalised eigenforms.
\end{opm}

\subsection{Classification of being totally $\lambda$-abelian}

The following lemma is well known.

\begin{lemma}\label{abgrp}
Let $G \subset \emph{GL}_2( k )$ be abelian. Then $G$ can be conjugated into the Borel subgroup of $\emph{GL}_2( K )$. 
\end{lemma}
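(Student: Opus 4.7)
The plan is standard simultaneous triangularisation of commuting operators, with the field $K$ appearing only to accommodate possibly irrational eigenvalues. I will split into two cases depending on whether some element of $G$ has two distinct eigenvalues.

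First I would dispose of the trivial case in which every element of $G$ is a scalar matrix: then $G \subset k^{\times} \cdot I \subset \text{GL}_2( k )$ is already contained in the Borel of $\text{GL}_2( k ) \subset \text{GL}_2( K )$, so no conjugation is needed. Otherwise, fix a non-scalar element $g \in G$. The characteristic polynomial of $g$ has degree $2$ over $k$, so its roots lie in the quadratic extension $K / k$, which is the precise reason why one must allow conjugation inside $\text{GL}_2( K )$ rather than $\text{GL}_2( k )$.

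Now I split into two subcases. Suppose first that $g$ has two distinct eigenvalues $\alpha, \beta \in K$, with corresponding $1$-dimensional eigenspaces $V_{\alpha}, V_{\beta} \subset K^2$. Any $h \in G$ commutes with $g$, so $h$ preserves each eigenspace of $g$; since the $V_{\alpha}$ and $V_{\beta}$ are $1$-dimensional, $h$ acts on each as a scalar. Choosing a $K$-basis of $K^2$ consisting of one vector from $V_{\alpha}$ and one from $V_{\beta}$ simultaneously diagonalises all of $G$, placing it inside the diagonal (hence Borel) subgroup of $\text{GL}_2( K )$. Suppose now that $g$ has a single repeated eigenvalue $\alpha \in K$; since $g$ is non-scalar, $g - \alpha I$ is a non-zero nilpotent matrix, so its kernel $V_{\alpha}$ is a $1$-dimensional $K$-subspace. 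Again, for any $h \in G$ the commutation relation $hg = gh$ forces $h( V_{\alpha} ) \subset V_{\alpha}$. Thus all elements of $G$ share the common invariant line $V_{\alpha}$, and completing a generator of $V_{\alpha}$ to a $K$-basis of $K^2$ conjugates $G$ into the upper-triangular subgroup of $\text{GL}_2( K )$.

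There is no real obstacle here: the only point that deserves care is the observation that the quadratic extension $K$ is genuinely needed, because a non-scalar $g \in G$ with irreducible characteristic polynomial over $k$ will only become triangularisable after base change to $K$, and $K$ is indeed large enough since it is the splitting field of every quadratic polynomial over $k$.
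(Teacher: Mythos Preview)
Your argument is correct and is essentially the same as the paper's: pick a non-scalar element, observe that any of its eigenspaces over $K$ is one-dimensional and preserved by every commuting element, and extend an eigenvector to a basis. The paper does this in one stroke without your case split on distinct versus repeated eigenvalues, since a single common invariant line already suffices for the Borel conclusion; your subcase with two distinct eigenvalues yields the stronger diagonalisation but is not needed.
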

\begin{proof}
Any eigenspace of a matrix that is not a multiple of the identity must be 1-dimensional. For any two such matrices $X,Y \in G$, if $v \in K^2$ is an eigenvector of $X$ with eigenvalue $\mu \in K^{\times}$, then $XYv = YXv = \mu Yv$ shows that $Yv$ is also in the $\mu$-eigenspace of $X$, and is thus a multiple of $v$. Therefore $v$ is also an eigenvector of $Y$. Now any basis that starts with $v$ will put $G$ inside the Borel subgroup.
\end{proof}

\begin{proof}[Proof of Theorem \ref{main1}.]
First suppose that the image $G_{\lambda}$ of $\rho_f^{\lambda}$ can be conjugated over $\mathbb{F}_{\lambda^2}$ to a subgroup of the Borel subgroup of $\text{GL}_2( \mathbb{F}_{\lambda^2} )$. By changing bases, we may regard $G_{\lambda}$ itself as such a subgroup. The map from $G_{\lambda}$ to $\mathbb{F}_{\lambda^2}^{\times} \times \mathbb{F}_{\lambda^2}^{\times}$ that sends a matrix to its two diagonal entries is now a group homomorphism. As the target is abelian, the image of $\text{Frob}_{\mathfrak{p}}$ under this map will be determined by a congruence condition on $p$. Because this image fully determines the trace, $f$ is totally $\lambda$-abelian in this case.

Conversely, if $f$ is totally $\lambda$-abelian, then Proposition \ref{red1} above implies in particular that the set of elements of trace $2 \in \mathbb{F}_{\lambda}$ can be written as the union of cosets of $[G_{\lambda}, G_{\lambda}]$ inside $G_{\lambda}$. Since $1 \in [G_{\lambda}, G_{\lambda}]$, this shows that each $A \in [G_{\lambda}, G_{\lambda}]$ must have trace 2. Since clearly $\text{det}(A) = 1$,  the characteristic polynomial of $A$ is given by $X^2 - 2X + 1 = (X-1)^2$. Therefore, $A$ must have the eigenvalue 1 with algebraic multiplicity 2, and so in the appropriate basis we may write
\[
A = \begin{pmatrix} 1 & x \\ 0 & 1 \end{pmatrix}, \quad \text{for some $x \in \mathbb{F}_{\lambda}$.}
\]
According to Proposition \ref{red1}, each set of constant trace should be stable under multiplication by $[G_{\lambda},G_{\lambda}]$, and so in particular, multiplication with $A$ should not change the trace. However,
\[
\text{tr}\left( \begin{pmatrix} a & b \\ c & d \end{pmatrix} \begin{pmatrix} 1 & x \\ 0 & 1 \end{pmatrix} \right) = \text{tr} \begin{pmatrix} a & b \\ c & d \end{pmatrix} + cx.
\]
Note that $x = 0$ if and only if $A = 1$. As soon as $[G_{\lambda}, G_{\lambda}] \neq \{ 1 \}$, it follows that in the right basis, $G_{\lambda}$ must be contained in the Borel subgroup of $\text{GL}_2( \mathbb{F}_{\lambda} )$. It remains to address the case that $[G_{\lambda}, G_{\lambda}] = \{ 1 \}$, or in other words, that $G_{\lambda}$ is abelian. The proof is then complete by Lemma \ref{abgrp} above.
\end{proof}

\subsection{Being weakly abelian for $x \in \mathbb{F}_{\lambda}^{\times}$}

We first prove the following generalisation of Lemma \ref{abgrp} above.
\begin{prop}\label{abgrp2}
Let $G \subset \emph{GL}_2( k )$ be a subgroup. Suppose that the identity is the only diagonalisable element of $[G, G]$. Then $G$ is conjugate to a subgroup of the Borel subgroup of $\emph{GL}_2( K )$. 
\end{prop}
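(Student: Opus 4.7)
The plan is to reduce to showing that the derived subgroup $H \colonequals [G, G]$ has a common fixed line on $\mathbb{P}^1$; combined with the normality of $H$ in $G$, this will force $G$ itself to preserve that line. If $H = \{1\}$ then $G$ is abelian and Lemma \ref{abgrp} already concludes, so I will assume $H$ is non-trivial.

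First I would describe what an element of $H$ must look like. Since commutators have determinant $1$, the hypothesis forces any $A \in H \setminus \{1\}$ to be a non-diagonalisable element of $\text{SL}_2(k)$. Such an element has a repeated eigenvalue $\epsilon \in \{\pm 1\}$, so $A = \epsilon I + N$ for a nonzero (and hence rank-one) nilpotent $N$. When $\ell \colonequals \text{char}(k)$ is odd, the case $\epsilon = -1$ is ruled out by the identity $(-I + N)^{\ell} = -I$, which follows from $N^2 = 0$ in characteristic $\ell$; this would place the diagonalisable non-identity matrix $-I$ in $H$, a contradiction. In characteristic $2$ both signs coincide. Thus every non-identity element of $H$ is unipotent, of the form $A = I + N_A$ with $N_A$ a nonzero nilpotent.

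Next I would exploit closure of $H$ under multiplication. For $A = I + N_A$ and $B = I + N_B$ in $H$, the product $AB = I + N_A + N_B + N_A N_B$ again lies in $H$ and is either $I$ or unipotent, so $\text{tr}(AB) = 2$ and hence $\text{tr}(N_A N_B) = 0$. Parametrising each rank-one nilpotent as $N_A = v_A w_A^{\mathrm{T}}$ with $w_A \cdot v_A = 0$ (and likewise for $B$), a direct computation gives
\[
\text{tr}(N_A N_B) = (w_A \cdot v_B)(w_B \cdot v_A).
\]
The vanishing of either factor forces $\ker N_A = \ker N_B$, since for a rank-one nilpotent one has $\ker N = \ker(w) = \mathrm{span}(v)$. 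Because the fixed line of $I + N$ is exactly $\ker N$, all non-identity elements of $H$ share a single line $L \in \mathbb{P}^1(k)$.

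Finally, normality of $H$ in $G$ transports the fixed line to $G$: for any $g \in G$ and $A \in H \setminus \{1\}$, the conjugate $g A g^{-1}$ lies in $H$ with fixed line $g(L)$, so the uniqueness established above gives $g(L) = L$. Thus $G$ stabilises $L$, placing it inside the Borel subgroup of $\text{GL}_2(k) \subseteq \text{GL}_2(K)$. I expect the main obstacle to be the trace computation forcing common fixed lines, which requires the explicit rank-one parametrisation together with the characteristic-$\ell$ bookkeeping that rules out $\epsilon = -1$; once these are in place, the normality step is essentially automatic.
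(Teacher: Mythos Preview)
Your proof is correct and takes a genuinely different route from the paper's. The paper fixes one non-trivial $A \in [G,G]$ in upper-triangular form, allows both diagonal signs $\pm 1$, and shows every other $B \in [G,G]$ is upper-triangular by computing $\text{tr}(A^n B) = \text{tr}(B) + cn\gamma$ and arguing that this takes too many values outside $\{\pm 2\}$ unless the bottom-left entry $c$ of $B$ vanishes; it then passes from $[G,G]$ to $G$ by an explicit computation of the bottom-left entry of the commutator $[X,A]$ for general $X \in G$. You instead first eliminate the $-1$ eigenvalue via the $\ell$-th power trick, which reduces the trace bookkeeping to the single equation $\text{tr}(N_A N_B) = 0$; your rank-one parametrisation then pins down the common fixed line for all of $[G,G]$ in one stroke, and your final step---using normality of $[G,G]$ in $G$ to transport that line---is more conceptual than the paper's entry-by-entry commutator calculation. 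The trade-off is that the paper never needs to rule out the $-1$ case separately and stays with bare matrix arithmetic throughout, while your argument makes the structural reason (namely that $G$ permutes the fixed lines of its normal subgroup, of which there is only one) explicit.
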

\begin{proof}
If $[G, G] = \{ 1 \}$, then $G$ is abelian and by Lemma \ref{abgrp} above, it is conjugate to a subgroup of the Borel subgroup of $\text{GL}_2( K )$. We may therefore assume that $[G, G]$ contains a non-trivial element $A$. 

Every matrix in $[G, G] \setminus \{ 1 \}$ must have a repeated eigenvalue, and because its determinant is 1, this eigenvalue has to be either $1$ or $-1$. The trace of any element of $[G,G]$ must therefore be $2$ or $-2$. We claim that, in the appropriate basis, $[G, G]$ is a subgroup of the Borel subgroup. To see this, we choose a basis in which $A$ is upper-triangular with $\pm 1$ as both diagonal entries. Squaring this element if $\ell \neq 2$ shows that we can even find a matrix in $[G, G] \setminus \{ 1 \}$ with both diagonal entries equal to 1. Then
\[
A = \begin{pmatrix} 1 & \gamma \\ 0 & 1 \end{pmatrix} \! \! \quad\text{and} \quad  \! \!  \begin{pmatrix} a & b \\ c & d \end{pmatrix} \in [G, G] \implies \pm 2 = \text{tr} \left( \begin{pmatrix} 1 & \gamma \\ 0 & 1 \end{pmatrix}^n \begin{pmatrix} a & b \\ c & d \end{pmatrix} \right) = \text{ tr} \begin{pmatrix} a & b \\ c & d \end{pmatrix} + c n \gamma = \pm 2 + c n \gamma.
\]
In characteristic 2, since $\gamma \neq 0$, it is clear that $c = 0$. In odd characteristic $\ell$, if $c \neq 0$, the expression $c n \gamma$ will take on at least $\ell \geq 3$ values, and thus we can find $n$ such that the above equality is not satisfied. Therefore $c = 0$ in all cases and so $[G, G]$ is contained in the Borel subgroup.

We now show that $G$ itself must be contained in the Borel subgroup. Indeed, if one commutes a general matrix with an upper triangular matrix $A \in [G,G]$ as above, it is an easy check that the bottom left entry of the result is $-(ad-bc)^{-1}c^2 \gamma$. As $[G, G]$ is contained in the Borel subgroup and $\gamma \neq 0$, this should vanish and therefore $c = 0$; this concludes the proof.
\end{proof}

The following is the main result of this subsection.

\begin{stelling}\label{weaklab}
Let $f \in \mathcal{S}_{\bfk}( N, \chi )$ be a normalised eigenform and let $\lambda \subset \mathcal{O}_f$ be a prime. Suppose that $f$ is weakly $\lambda$-abelian for some $x \in \mathbb{F}_{\lambda}^{\times}$. Then $G_{\lambda}$ can be conjugated into the Borel subgroup inside $\emph{GL}_2( \mathbb{F}_{\lambda^2} )$.
\end{stelling}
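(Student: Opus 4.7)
The plan is to reduce the statement to Proposition \ref{abgrp2}: that result concludes exactly what is asked, so it suffices to verify its hypothesis, namely that the identity is the only diagonalisable element of $[G_\lambda, G_\lambda]$.

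First, I would invoke Proposition \ref{red1} to pick a representative $g_0 \in G_\lambda$ such that $\mathrm{tr}(g_0 h) = x$ for every $h \in [G_\lambda, G_\lambda]$. Because the commutator subgroup is closed under taking powers, for any $A \in [G_\lambda, G_\lambda]$ this yields the entire family of constraints
\[
\mathrm{tr}(g_0 A^n) = x \quad \text{for every integer } n.
\]

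Second, I would suppose for contradiction that some $A \in [G_\lambda, G_\lambda]$ is diagonalisable and not equal to the identity. Since commutators have determinant one, its eigenvalues are $\mu, \mu^{-1} \in \overline{\mathbb{F}_\lambda}$ with at least one of them distinct from $1$. Extending scalars and passing to a basis that diagonalises $A$, I write $g_0$ as a matrix with entries $a, b, c, d$ in this basis; the above family of trace constraints becomes
\[
a\mu^n + d\mu^{-n} = x \quad \text{for every integer } n.
\]
If $\mu = -1$, then necessarily $\ell \neq 2$ and $A = -I$, so $\mathrm{tr}(g_0 A) = -x$, forcing $x = 0$ and contradicting $x \in \mathbb{F}_\lambda^\times$. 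Otherwise $\mu \neq \pm 1$, and a short comparison of the equations for $n = 0, 1, 2$ gives first $d = a\mu$ and then $(\mu-1)^2(\mu+1) = 0$, which is absurd.

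Third, with the hypothesis of Proposition \ref{abgrp2} verified, I would apply it directly to conclude that $G_\lambda$ is conjugate over $\mathbb{F}_{\lambda^2}$ to a subgroup of the Borel subgroup, completing the proof. The main obstacle is the brief algebraic calculation in the middle; the conceptual content is that the hypothesis $x \neq 0$ is exactly what rules out the eigenvalue $\mu = -1$, which is precisely why the case $x = 0$ (treated by Theorems \ref{main2} and \ref{main3}) genuinely allows a larger range of projective images.
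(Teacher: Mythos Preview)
Your proof is correct and follows essentially the same route as the paper: invoke Proposition \ref{red1} to obtain a coset of $[G_\lambda,G_\lambda]$ with constant trace $x$, diagonalise an arbitrary diagonalisable commutator $A$ and use several powers of $A$ to force $\mu=1$, then conclude via Proposition \ref{abgrp2}. The only cosmetic differences are that the paper uses the exponents $n=-1,0,1$ rather than $n=0,1,2$ and handles the case $\mu=-1$ uniformly within its algebra instead of separating it out, but the structure and the key idea are identical.
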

\begin{proof}
Consider any $A \in [G_{\lambda}, G_{\lambda}]$ that is diagonalisable over $\mathbb{F}_{\lambda^2}$. Then since $\text{det}(A) = 1$, as it is a commutator, in the appropriate basis, we may write $A$ as the diagonal matrix with two entries $\alpha, \beta \in \mathbb{F}_{\lambda^2}$ with $\alpha \beta = 1$. By assumption there exists some coset of $[G_{\lambda}, G_{\lambda}]$ on which the trace is constant. Let $w$ and $z$ denote the two diagonal entries of any matrix representing this coset. 

Since multiplication by both $A$ and $A^{-1}$ should not change the trace, we find that $w \alpha + z \beta = w + z = z \alpha + w \beta$. Multiplying the first equation with $w$ and the second with $z$ yields that $w^2 \alpha + wz \beta = w^2 + wz$ and $z^2 \alpha + wz \beta = wz + z^2$. Subtracting these two equations now yields that $(w^2 - z^2) \alpha = w^2 - z^2$. 

Because $x = w + z \neq 0$, we may divide by $w+z$ to obtain $(w - z) \alpha = w - z$. If $w = z \neq 0$, then the system reduces to $\alpha + \beta = 2$, which together with $\alpha \beta = 1$ implies that $\alpha = \beta = 1$. Otherwise, we divide by $w-z \neq 0$ to also find that $\alpha = 1 = \beta$. The above shows that the identity is the only diagonalisable matrix in $[G_{\lambda}, G_{\lambda}]$ and therefore the proof is complete by Proposition \ref{abgrp2}.
\end{proof}

\subsection{Being weakly $\lambda$-abelian for $0 \in \mathbb{F}_{\lambda}$}

Recall that $\overline{G}_{\lambda}$ denotes the image of $G_{\lambda} \subset \text{GL}_2( \mathbb{F}_{\lambda} )$ under the projection map $\text{GL}_2( \mathbb{F}_{\lambda} ) \to \text{PGL}_2( \mathbb{F}_{\lambda} )$.

\begin{lemma}\label{abgrp3}
Let $G \subset \emph{GL}_2( k )$ be a subgroup and let $\overline{G} \subset \emph{PGL}_2( k )$ denote its projective image. If $\overline{G}$ is cyclic, then $G$ must be abelian.
\end{lemma}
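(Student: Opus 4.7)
The plan is to exploit the defining short exact sequence
\[
1 \to G \cap Z \to G \to \overline{G} \to 1,
\]
where $Z \subset \text{GL}_2(k)$ is the subgroup of scalar matrices, which is the kernel of the projection $\text{GL}_2(k) \to \text{PGL}_2(k)$. So $\overline{G} \cong G / (G \cap Z)$.

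First I would fix a generator $\overline{g}$ of $\overline{G}$ and lift it to some $g \in G$. By the description of $\overline{G}$ as a quotient, every element of $G$ can then be written in the form $z \cdot g^n$ for some scalar matrix $z \in G \cap Z$ and some integer $n \geq 0$.

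Next I would compute a general product: given two elements $z_1 g^{n_1}$ and $z_2 g^{n_2}$ of $G$, since scalar matrices lie in the center of $\text{GL}_2(k)$ and since powers of $g$ commute among themselves, one has
\[
(z_1 g^{n_1})(z_2 g^{n_2}) = z_1 z_2 \, g^{n_1 + n_2} = z_2 z_1 \, g^{n_2 + n_1} = (z_2 g^{n_2})(z_1 g^{n_1}).
\]
This shows that any two elements of $G$ commute, so $G$ is abelian. There is no real obstacle here; the statement is essentially the observation that a central extension of a cyclic group is always abelian, applied to the central subgroup $G \cap Z$ of $G$.
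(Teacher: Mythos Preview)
Your proof is correct and follows essentially the same approach as the paper. The paper's proof simply observes that $\overline{G}$ is a quotient of $G$ by a central subgroup and then invokes the well-known fact that a group whose quotient by a central subgroup is cyclic must be abelian; you have spelled out the standard proof of that fact in this particular setting.
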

\begin{proof}
The group $\overline{G}$ is a quotient of $G$ by a central subgroup. It is a famous fact, and in fact easy to show, that any group whose quotient by a central subgroup is cyclic, must in fact be abelian.
\end{proof}

Working in the algebraic group $\text{PGL}_2$ as opposed to $\text{GL}_2$ has a conceptual advantage. Namely, the traceless elements in $\text{PGL}_2( \mathbb{F}_{\lambda})$ can in fact be identified using a \emph{multiplicative} condition; this is not as easy in $\text{GL}_2( \mathbb{F}_{\lambda})$. This is captured by the following lemma.

\begin{lemma}\label{mat}
Let $k$ be a field and let $A \in \emph{PGL}_2(k)$. 
\begin{itemize}
\item If $\emph{char}(k) \neq 2$, then $A$ has order $2$ in $\emph{PGL}_2(k)$ if and only if $\emph{tr}(A) = 0$. 
\item If $\emph{char}(k) = 2$, then $A$ has order at most $2$ in $\emph{PGL}_2(k)$ if and only if $\emph{tr}(A) = 0$. 
\end{itemize}
\end{lemma}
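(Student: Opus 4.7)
The plan is to reduce the statement about order in $\text{PGL}_2(k)$ to a statement about $\text{GL}_2(k)$ via Cayley--Hamilton. Fix a lift $A \in \text{GL}_2(k)$ of the given projective class. The projective class has order dividing $2$ if and only if $A^2 = \mu I$ for some $\mu \in k^{\times}$; it has order exactly $2$ if additionally $A$ is not itself a scalar matrix.

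By the Cayley--Hamilton theorem, any $A \in \text{GL}_2(k)$ satisfies
\[
A^2 \;=\; \text{tr}(A)\,A - \det(A)\,I.
\]
Hence $A^2$ is a scalar matrix if and only if $\text{tr}(A)\,A$ is a scalar matrix, which happens if and only if either $\text{tr}(A) = 0$, or $A$ itself is a scalar multiple of $I$. This is the core of the argument. From here I split into the two characteristics.

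In characteristic different from $2$, a scalar matrix $aI \in \text{GL}_2(k)$ has trace $2a \neq 0$, so the two possibilities above are mutually exclusive for lifts in $\text{GL}_2(k)$. Thus if $A$ represents an element of order $2$ in $\text{PGL}_2(k)$, then $A$ is not scalar and therefore $\text{tr}(A) = 0$. Conversely, if $\text{tr}(A) = 0$, then $A$ is not scalar (as $2 \in k^{\times}$), and $A^2 = -\det(A)\,I$ is a nonzero scalar, so the class of $A$ has order exactly $2$ in $\text{PGL}_2(k)$.

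In characteristic $2$, scalar matrices $aI$ automatically satisfy $\text{tr}(aI) = 2a = 0$, so the disjunction "$\text{tr}(A) = 0$ or $A$ scalar" collapses to just $\text{tr}(A) = 0$. This recovers the equivalence, but a scalar lift now corresponds to the identity class in $\text{PGL}_2(k)$, which has order $1$; this is exactly why the statement must weaken "order $2$" to "order at most $2$" in this case. There is no real obstacle: the only subtlety is keeping careful track of the distinction between "scalar in $\text{GL}_2(k)$" and "identity in $\text{PGL}_2(k)$", together with the fact that in characteristic $2$ every scalar matrix already has vanishing trace.
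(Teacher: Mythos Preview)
Your proof is correct and follows essentially the same approach as the paper: both arguments lift to $\text{GL}_2(k)$, apply Cayley--Hamilton to obtain $A^2 = \text{tr}(A)\,A - \det(A)\,I$, and then analyse when $\text{tr}(A)\,A$ is scalar, splitting on whether $A$ itself is scalar. Your organisation (first isolating the dichotomy ``$\text{tr}(A)=0$ or $A$ scalar'' and then specialising to each characteristic) is slightly tidier than the paper's, but the mathematical content is identical.
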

\begin{proof}
Let $X$ be a lift of $A$ to $\text{GL}_2(k)$. The Cayley-Hamilton theorem states that $X^2 = \text{tr}(X)X - \text{det}(X)$.

Therefore, if $\text{tr}(X) = 0$, the matrix $X$ squares to a scalar matrix. Its image $A$ therefore has trivial square in $\text{PGL}_2(k)$, so its order is at most $2$. If its order were $1$, then $X$ itself were scalar, and therefore it could only be of zero trace if $\text{char}(k) = 2$.

Conversely, we note that $A$ has trivial square if and only if $X^2$ is a scalar matrix. This shows that $\text{tr}(X)X$ is also a scalar matrix in that case. If $\text{tr}(X) = 0$, we are done. If not, then it follows that $X$ itself must be scalar, but then $A$ was trivial to begin with.
\end{proof}

\begin{lemma}\label{dihedral}
Let $G \subset \emph{GL}_2( k )$ be a subgroup that consists only of diagonal or off-diagonal matrices. Then its projective image $\overline{G} \subset \emph{PGL}_2( k )$ is either cyclic or dihedral.
\end{lemma}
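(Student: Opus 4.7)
My plan is to leverage the fact that the diagonal matrices form a subgroup $D \subset \text{GL}_2(k)$, while the off-diagonal matrices form a single coset of $D$. Since the product of two off-diagonal matrices is diagonal, the hypothesis gives that $N := G \cap D$ is a subgroup of index one or two in $G$. If $G = N$, then $G$ is entirely diagonal, and the homomorphism $\text{diag}(a, b) \mapsto a/b$ induces an injection $\overline{G} \hookrightarrow k^\times$; since $k$ is a finite field, $k^\times$ is cyclic, and we conclude that $\overline{G}$ is cyclic.

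Otherwise I pick an off-diagonal element $w = \begin{pmatrix} 0 & \alpha \\ \beta & 0 \end{pmatrix} \in G \setminus N$. A direct calculation gives $w^2 = \alpha\beta \cdot I$, a nonzero scalar, so $\overline{w}$ has order $2$ in $\text{PGL}_2(k)$; it is non-trivial because $w$ itself is not scalar, and similarly $\overline{w} \notin \overline{N}$, since an off-diagonal matrix can never be a scalar multiple of a diagonal one. Hence $[\overline{G}: \overline{N}] = 2$, and $\overline{N}$ is still a subgroup of the cyclic group $k^\times$.

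To identify $\overline{G}$ as dihedral it remains only to check that $\overline{w}$ acts on $\overline{N}$ by inversion. A routine computation yields
\[
w \begin{pmatrix} a & 0 \\ 0 & b \end{pmatrix} w^{-1} = \begin{pmatrix} b & 0 \\ 0 & a \end{pmatrix},
\]
which under the identification $\text{diag}(a, b) \mapsto a/b$ corresponds to the map $x \mapsto x^{-1}$ on $k^\times$. Together with the cyclicity of $\overline{N}$ and the fact that $\overline{w}$ has order $2$ and lies outside $\overline{N}$, this is precisely the dihedral structure $\overline{G} = \overline{N} \rtimes \langle \overline{w} \rangle$. The only mildly subtle checks are that $\overline{w}$ has order exactly $2$ and lies outside $\overline{N}$, but both reduce to the single observation that off-diagonal matrices are never scalar, so I anticipate no real obstacle.
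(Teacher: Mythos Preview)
Your proof is correct and follows essentially the same approach as the paper's: both identify the diagonal part of $G$ as having cyclic projective image inside $k^{\times}$, observe that the off-diagonal elements project to involutions, and conclude the dihedral structure from there. The only cosmetic differences are that you compute $w^2 = \alpha\beta\cdot I$ and the conjugation action directly rather than invoking the trace-zero characterization of involutions (Lemma~\ref{mat}), and you verify the semidirect product explicitly whereas the paper quotes the abstract characterization ``cyclic index-$2$ subgroup outside of which every element has order~$2$''.
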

\begin{proof}
The subgroup of $G$ consisting of diagonal matrices is a subgroup of $k^{\times} \times k^{\times}$, and therefore its projective image is a subgroup of $k^{\times}$; whence it is cyclic. Projectively, all off-diagonal matrices square to the identity by Lemma \ref{mat} above. Therefore $\overline{G}$ consists of a cyclic subgroup of index at most 2 outside of which each element is of order exactly 2. This precisely means that $\overline{G}$ is either cyclic or dihedral.
\end{proof}

\begin{prop}\label{pm1}
Let $G \subset \emph{GL}_2( k )$ be a subgroup such that $[G,G] = \{ \pm 1 \}$. If $\ell \neq 2$, then $\overline{G} \cong C_2 \times C_2$. 
\end{prop}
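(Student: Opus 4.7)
The plan is to reduce to the situation of Lemma \ref{dihedral} by showing that, in an appropriate basis over $K$, every element of $G$ is either diagonal or off-diagonal; the combinatorics at the end will then be very short. Two easy facts are used first. Since $[G,G] = \{\pm 1\}$ consists of scalars, it projects trivially into $\mathrm{PGL}_2(k)$, so $\overline{G}$ is a quotient of the abelian group $G/[G,G]$ and is therefore abelian. On the other hand $G$ itself is non-abelian (otherwise $[G,G] = \{1\}$), so by Lemma \ref{abgrp3} the quotient $\overline{G}$ cannot be cyclic.

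Next I would extract a concrete anti-commuting pair. By non-abelianness, there exist $g,h \in G$ with $[g,h] = -1$, i.e.\ $ghg^{-1} = -h$ and symmetrically $hgh^{-1} = -g$. Each of $g$ and $h$ is thus conjugate to its own negative, which (using $\ell \neq 2$) forces $\mathrm{tr}(g) = \mathrm{tr}(h) = 0$. Hence $g$ has distinct eigenvalues $\pm\mu$ over $K$, and I may change basis over $K$ so that $g = \mathrm{diag}(\mu,-\mu)$; the relation $gh = -hg$ then immediately forces $h$ to be anti-diagonal. The key step is to propagate this to all of $G$: for any $k \in G$, the images $\bar k$ and $\bar g$ commute in the abelian group $\overline{G}$, so $[k,g] \in \{\pm 1\}$. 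If $[k,g] = 1$, then $k$ lies in the centralizer of $g$, which (since $g$ has distinct eigenvalues) is exactly the diagonal torus, so $k$ is diagonal; if $[k,g] = -1$, the same anti-commutation computation used for $h$ yields that $k$ is anti-diagonal.

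This places $G \subset \mathrm{GL}_2(K)$ inside the normalizer of the diagonal torus, consisting only of diagonal or off-diagonal matrices. Lemma \ref{dihedral} then gives that $\overline{G}$ is cyclic or dihedral. Combining with the first paragraph (abelian, non-cyclic) leaves only $\overline{G} \cong D_2 = C_2 \times C_2$, as required. The main obstacle is really the one addressed in the second paragraph: showing that the single commutator constraint $[G,G] = \{\pm 1\}$ is enough to pin every element of $G$ simultaneously into the diagonal/anti-diagonal dichotomy relative to one chosen non-central $g$. Once that dichotomy is secured, the structural lemmas already available in the excerpt finish the argument essentially for free.
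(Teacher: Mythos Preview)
Your argument is correct and follows essentially the same route as the paper: pick an anti-commuting pair, deduce trace zero, diagonalise one element over $K$, show every element of $G$ either commutes or anti-commutes with it (hence is diagonal or anti-diagonal), apply Lemma~\ref{dihedral}, and rule out the cyclic case via Lemma~\ref{abgrp3} together with the observation that $\overline{G}$ is abelian. The only cosmetic issue is your reuse of the letter $k$ for a group element when it already names the ground field.
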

\begin{proof}
By assumption, there exist $X,Y \in G$ such that $XY = -YX$. We may rewrite this as $Y^{-1}XY = - X$ and taking traces on both sides now yields that $\text{tr}( X ) = - \text{tr}( X )$. It follows that $\text{tr}(X) = 0$, so we may choose a basis in which $X$ equals the diagonal matrix with entries $x, -x \in K^{\times}$. One now computes that those matrices in $G$ that \emph{anti-commute} with $X$ must all be off-diagonal. Conversely, the matrices in $G$ that \emph{commute} with $X$ must all be diagonal. 

Since these two cases exhaust all elements of $G$, Lemma \ref{dihedral} above now shows that the group $\overline{G}$ must be cyclic or dihedral. If $\overline{G}$ were cyclic, then Lemma \ref{abgrp3} shows that $G$ itself must have been abelian, contradicting $-1 \in [G,G]$. Therefore $\overline{G}$ is dihedral. Since $[G,G] = \{ \pm 1 \}$, its image in $\overline{G}$ is trivial and therefore $\overline{G}$ must also be abelian. These two conditions force the conclusion of the lemma.
\end{proof}

\begin{lemma}\label{exclusive}
Let $G$ be a subgroup of $\emph{GL}_2( k )$ such that its projective image $\overline{G}$ is isomorphic to a dihedral group $D_n$ for some $n \geq 1$. Then $G$ can be conjugated into the Borel subgroup if and only if $n = 1$ or $\ell \mid n$.
\end{lemma}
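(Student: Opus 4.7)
The plan is to pass to the projective picture: $G$ lies in the Borel of $\text{GL}_2(K)$ if and only if $\overline{G}$ stabilizes a point on $\mathbb{P}^1(\overline{k})$, since the projective Borel is precisely the stabilizer of such a point. I would thus characterize when $\overline{G} \cong D_n$ admits a common fixed point on $\mathbb{P}^1(\overline{k})$.

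For the forward direction, suppose $\overline{G}$ has common fixed point $p$, and argue by contradiction that $n \geq 2$ and $\ell \nmid n$ is impossible. Let $\overline{r} \in \overline{G}$ generate the cyclic rotation subgroup of order $n$, and let $\overline{s}$ be any reflection. Since $\ell \nmid n$, any lift of $\overline{r}$ satisfies a polynomial with distinct roots, so $\overline{r}$ is semisimple; as $n \geq 2$, it has exactly two distinct fixed points $\{p, p'\}$ on $\mathbb{P}^1(\overline{k})$. The relation $\overline{s}\,\overline{r}\,\overline{s}^{-1} = \overline{r}^{-1}$ forces $\overline{s}$ to normalize $\langle \overline{r} \rangle$ and hence permute $\{p, p'\}$; combined with $\overline{s}$ fixing $p$, this forces $\overline{s}$ to fix $p'$ as well. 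In a basis whose coordinate points are $p$ and $p'$, every element of $\overline{G}$ is diagonal, so $\overline{G}$ lies in the diagonal torus of $\text{PGL}_2$, which is abelian. If $n \geq 3$, this contradicts the non-abelianness of $D_n$. If $n = 2$ (so $\ell \neq 2$), then $\overline{G} \cong C_2 \times C_2$ has three non-trivial involutions, but the diagonal torus of $\text{PGL}_2$ contains only one, namely the class of $\text{diag}(1,-1)$; contradiction. Finally, if $\ell = 2$ with $n$ odd, then by Lemma \ref{mat} each reflection $\overline{s}$ is unipotent with a unique fixed point, and the requirement that it fix both $p$ and $p'$ is already absurd.

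For the converse, the case $n = 1$ is immediate: a non-scalar lift of the unique non-trivial element of $\overline{G}$ has an eigenvector over $\overline{k}$, and the corresponding change of basis places $G$ (together with the scalars it contains) into the Borel. For $\ell \mid n$, I would first reduce to $n = \ell$: the $\ell$-Sylow of $\text{PGL}_2(k)$ is the elementary abelian unipotent radical, and a non-trivial unipotent does not commute with any non-scalar semisimple element in $\text{PGL}_2$, so no element of $\text{PGL}_2(k)$ has order $\ell m$ with $m > 1$ coprime to $\ell$. Consequently $\langle \overline{r} \rangle \cong C_\ell$ consists of unipotents sharing a unique fixed point $p$, and every reflection $\overline{s}$ normalizes $\langle \overline{r} \rangle$ and hence fixes $p$. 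Thus $\overline{G}$ stabilizes $p$; conjugating to send $p$ to $\infty$ places $\overline{G}$, and therefore $G$, in the Borel.

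The main obstacle is the $n = 2$ subcase of the forward direction: because $D_2$ is itself abelian, the generic "$\overline{G}$ abelian implies contradiction" argument fails, and one must instead invoke the finer fact that the projective diagonal torus contains a unique non-trivial involution. A secondary technicality is the reduction $\ell \mid n \Rightarrow n = \ell$ in the backward direction, which depends on the structure of $\ell$-torsion in $\text{PGL}_2(k)$.
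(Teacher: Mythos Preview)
Your proof is correct and takes a genuinely different route from the paper's direct argument. The paper works algebraically with traces inside $\text{GL}_2$: in the forward direction, the reflection coset of $G$ in the Borel consists of traceless upper-triangular matrices (diagonal entries $x,-x$), and stability of this coset under multiplication by the rotation subgroup $H$ forces every element of $H$ to have equal diagonal entries, hence to be projectively scalar or non-diagonalisable --- yielding $n=1$ or $\ell \mid n$. For the converse with $\ell \mid n$, the paper reduces to $n = \ell$ via the same order observation you use and then invokes the trace computation from the proof of Theorem~\ref{main1}. Your argument is instead geometric: you characterise the Borel as a point stabiliser on $\mathbb{P}^1$, show that a common fixed point together with a semisimple rotation of order $n \geq 2$ forces $\overline{G}$ into the diagonal torus (whence cyclic, or with too few involutions when $n=2$), and handle the converse by exploiting that a non-trivial unipotent's unique fixed point is preserved by its normaliser. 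Your approach is more self-contained and conceptual; the paper's dovetails with its running trace-based machinery and reuses Lemmas~\ref{abgrp} and~\ref{abgrp3} for the case $n=1$. One small remark: your separate treatment of the case $\ell = 2$ in the forward direction is redundant, since $\ell \nmid n$ and $n \geq 2$ already force $n \geq 3$ there, and your non-abelianness contradiction applies directly.
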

\begin{proof}
This can be deduced from Theorem D in \cite{Faber} after noting that $D_{\ell}$ is an $\ell$-semi-elementary subgroup and therefore contained in the Borel subgroup, whereas the other dihedral cases are not. 

We can also argue directly: by conjugating it to be upper triangular, it is an easy check that the multiplicative order of the projective image of any diagonalisable matrix is coprime to $\ell$, whereas for non-diagonalisable matrices this order must equal $\ell$. Now suppose first that $G$ is contained in the Borel subgroup. Because $\overline{G}$ has an index 2 coset consisting of traceless elements, so must $G$. Let $H \subset G$ denote the relevant index 2 subgroup, so this coset is $G \setminus H$. The two diagonal entries of all elements of $G \setminus H$ must thus have opposite sign. Since $G \setminus H$ is stable under multiplication by any element of $H$, it follows that the two diagonal entries of any $A \in H$ must be equal. If $A$ is not diagonalisable, then its projective order is $\ell$ in $\overline{G}_{\lambda}$ and therefore $\ell \mid n$. If we cannot find such an element in $H$, then $\overline{H} = \{ 1 \}$ and so $n = 1$.

Conversely, if $n = 1$, then by Lemma \ref{abgrp3}, $G$ must be abelian and therefore a subgroup of the Borel subgroup by Lemma \ref{abgrp}. Suppose now that $\ell \mid n$, so that $\overline{G}$ contains an element of order $\ell$. It follows that $\overline{G}$ contains a non-diagonalisable element $\overline{A}$ as above and this element generates the cyclic subgroup of $\overline{G}$. Indeed, if the order of an element of $\overline{G}$ is divisible by $\ell$, it must equal $\ell$. The same argument as in the proof of Theorem \ref{main1} now shows that $\overline{G}$ must be contained in the Borel subgroup, and therefore so must $G$.
\end{proof}

\bigskip

\begin{stelling}\label{weaklab0}
Let $f \in \mathcal{S}_{\bfk}( N, \chi )$ be a normalised eigenform. Suppose that $f$ is weakly $\lambda$-abelian for $0 \in \mathbb{F}_{\lambda}$ and that $f$ is \emph{not} totally $\lambda$-abelian. Then $\overline{G}_{\lambda}$ isomorphic to a dihedral group $D_n$ with $n \neq 1$ and $\ell \nmid n$.
\end{stelling}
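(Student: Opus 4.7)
The plan is to extract information about $G_\lambda$ from the existence of a single coset $gC$ of $C := [G_\lambda, G_\lambda]$ on which the trace is identically zero, which is supplied by Proposition~\ref{red1}. For $c \in C$ we have $\det(gc) = \det(g)$ (since commutators have determinant $1$) and $\mathrm{tr}(gc) = 0$, so the Cayley--Hamilton theorem yields $(gc)^2 = -\det(g)\, I$. Taking $c = I$ gives $g^2 = -\det(g)\, I$, so that $(gc)^2 = g^2$, which cancels to $cgc = g$ and hence $gcg^{-1} = c^{-1}$ for every $c \in C$. Evaluating this inversion relation on a product $cc' \in C$ in two ways then forces $cc' = c'c$, so $C$ is abelian.

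Next I perform a case analysis on $C$. If $C = \{I\}$, then $G_\lambda$ is abelian, so Lemma~\ref{abgrp} puts it inside a Borel subgroup of $\mathrm{GL}_2(\mathbb{F}_{\lambda^2})$, contradicting (via Theorem~\ref{main1}) that $f$ is not totally $\lambda$-abelian. The only remaining scalar case is $C = \{\pm I\}$, which requires $\ell \ne 2$; Proposition~\ref{pm1} then gives $\overline{G}_\lambda \cong C_2 \times C_2 = D_2$, and the theorem holds with $n = 2$ and $\ell \nmid 2$.

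Otherwise $C$ contains a non-scalar element $c_0$. I would first rule out the non-diagonalizable subcase: if $c_0$ has a unique eigenline $V$ over $\overline{\mathbb{F}}_\lambda$, then every element of the abelian group $C$ preserves $V$; since $C$ is normal in $G_\lambda$, for any $h \in G_\lambda$ the matrix $h c_0 h^{-1}$ lies in $C$ and also preserves $V$, which forces $h^{-1}V$ to be a $c_0$-invariant line and hence equal to $V$. Then $G_\lambda$ stabilises $V$ and lies in a Borel, contradicting the hypothesis. Hence $c_0$ is diagonalisable over $\mathbb{F}_{\lambda^2}$ with two distinct eigenvalues; its centraliser in $\mathrm{GL}_2(\mathbb{F}_{\lambda^2})$ is the maximal torus $T = \mathbb{F}_{\lambda^2}[c_0]^\times$, and $C \subseteq T$ by abelianness. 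For any $h \in G_\lambda$, $h c_0 h^{-1}$ is a non-scalar element of $C \subseteq T$; since $\mathbb{F}_{\lambda^2}[c_0]$ is $2$-dimensional, the subalgebra generated by this image must coincide with $\mathbb{F}_{\lambda^2}[c_0]$. Thus $h$ normalises $T$, so $G_\lambda \subseteq N := N_{\mathrm{GL}_2(\mathbb{F}_{\lambda^2})}(T)$, whose projective image is dihedral (as $N/T$ has order $2$ and acts on $T$ by inversion).

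Consequently, $\overline{G}_\lambda$ is a subgroup of a dihedral group, hence itself either cyclic or dihedral. The cyclic case is excluded by Lemma~\ref{abgrp3}, as $G_\lambda$ is non-abelian ($C \ne \{I\}$), so $\overline{G}_\lambda \cong D_n$ for some $n \geq 2$. If $\ell$ were to divide $n$, Lemma~\ref{exclusive} would push $G_\lambda$ into a Borel, another contradiction; hence $\ell \nmid n$. The main obstacle I anticipate is the reduction from ``$c_0$ non-scalar'' to ``$c_0$ diagonalisable'', which relies crucially on the normality of $C$ in $G_\lambda$ combined with the rigidity of the unique eigenspace of a non-diagonalisable matrix, and the parallel (but cleaner) step of deducing that $h$ normalises the entire torus $T$ from the weaker fact that it sends one non-scalar generator of $T$ into $T$.
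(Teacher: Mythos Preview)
Your proof is correct and takes a genuinely different route from the paper's.

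The paper first invokes Proposition~\ref{abgrp2} (whose contrapositive furnishes a non-identity \emph{diagonalisable} element $A \in [G_\lambda,G_\lambda]$ as soon as $G_\lambda$ is not Borel), diagonalises $A$ explicitly, and then carries out coordinate computations: it checks that the trace-zero coset is off-diagonal, that $[G_\lambda,G_\lambda]$ is diagonal, and finally that every element of $G_\lambda$ is diagonal or off-diagonal, before appealing to Lemma~\ref{dihedral}. Your argument instead extracts a structural relation directly from Cayley--Hamilton on the trace-zero coset: $(gc)^2 = -\det(g)\,I = g^2$ yields $gcg^{-1} = c^{-1}$ for every $c \in C$, so $C$ is abelian with no computation at all. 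You then use normality of $C$ together with the rigidity of eigenspaces (unique eigenline in the unipotent case, full diagonal subalgebra generated by any non-scalar element in the semisimple case) to place $G_\lambda$ inside the normaliser of a torus over $\mathbb{F}_{\lambda^2}$, whose projective image is dihedral.

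What each approach buys: the paper's argument is in keeping with its stated expository aim of doing everything by explicit $2\times 2$ matrix arithmetic, and it reuses Proposition~\ref{abgrp2} rather than proving that $C$ is abelian. Your approach is more conceptual and entirely coordinate-free until the very end; the inversion relation $gcg^{-1}=c^{-1}$ is a sharper intermediate conclusion than anything the paper records, and your handling of the non-diagonalisable case (the unique eigenline is $G_\lambda$-stable by normality, forcing Borel) is cleaner than routing through Proposition~\ref{abgrp2}. One small point worth making explicit in your write-up: after conjugating into $\mathrm{GL}_2(\mathbb{F}_{\lambda^2})$ to diagonalise $c_0$, the abstract isomorphism type of $\overline{G}_\lambda$ is unaffected, so concluding ``subgroup of a dihedral group, hence cyclic or dihedral'' is legitimate even though $\overline{G}_\lambda$ was originally defined inside $\mathrm{PGL}_2(\mathbb{F}_\lambda)$.
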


\begin{proof}
Since we assume $f$ to be not totally $\lambda$-abelian, the image $G_{\lambda}$ cannot be conjugated into a Borel subgroup. By Lemma \ref{abgrp2}, the commutator subgroup $[G_{\lambda}, G_{\lambda}]$ will therefore contain a diagonalisable matrix $A \neq 1$. If $[G_{\lambda}, G_{\lambda}] = \{ \pm 1 \}$, then Lemma \ref{pm1} above shows that $\overline{G}$ is dihedral, as $D_2 \cong C_2 \times C_2$. 

Suppose now that we can find $A \in [G,G] \setminus \{ \pm 1 \}$, and choose a basis in which $A$ is diagonal with entries $\alpha, \beta \in \mathbb{F}_{\lambda^2}^{\times}$. Since $\alpha \beta = 1$, we must have $\alpha \neq \beta$. The trace of the result of multiplying $A$ with any matrix with diagonal entries $x$ and $-x$ equals $( \alpha - \beta )x$. Since $G_{\lambda}$ should contain a coset of $[G_{\lambda}, G_{\lambda}]$ on which the trace is constantly zero by Lemma \ref{red1}, any element of this coset must now be off-diagonal. 

We next note that $[G_{\lambda}, G_{\lambda}]$ consists solely of diagonal matrices. Indeed, one easily sees that only diagonal matrices multiply an off-diagonal matrix into another off-diagonal matrix.

As a result, if we compute the commutator of an arbitrary matrix in $G_{\lambda}$ with $A$, then the off-diagonal entries of the result should vanish. Performing the calculation, we find the conditions $(\alpha^2 - 1) ab = 0 = (\beta^2 - 1) cd$. Since we assume $\alpha, \beta \neq \pm 1$, this shows that $ab = cd = 0$ for any matrix in $G_{\lambda}$. If $a \neq 0$, then $b = 0$ and we must have $d \neq 0$, and so $c = 0$ and the matrix is diagonal. If $b \neq 0$, then $a = 0$ and we must have $c \neq 0$, and so $d = 0$ and the matrix is off-diagonal. Now Lemma \ref{dihedral} shows that $\overline{G} \subset \text{PGL}_2( \mathbb{F}_{\lambda} )$ must be dihedral. Finally, since we assume $f$ to not be totally $\lambda$ abelian, Lemma \ref{exclusive} above in combination with Theorem \ref{main1} shows that $\overline{G} \cong D_n$ cannot satisfy $n = 1$ or $\ell \nmid n$. 
\end{proof}

\subsection{Classification of being (weakly) $\lambda$-abelian}\label{labclass}

Given Theorem \ref{weaklab}, to prove Theorems \ref{main2} and \ref{main3}, it remains to establish a converse of Theorem \ref{weaklab0} above. These are phrased in terms of the projective image $\overline{G}_{\lambda}$. Even though $\text{PGL}_2( \mathbb{F}_{\lambda})$ no longer comes equipped with a natural trace map, as the projection map $\text{GL}_2( \mathbb{F}_{\lambda}) \to \text{PGL}_2( \mathbb{F}_{\lambda})$ identifies many matrices with distinct traces, this projection map \emph{does preserve} the subset of traceless matrices. Therefore, we can still speak of traceless elements in $\text{PGL}_2( \mathbb{F}_{\lambda})$, and elements with non-zero trace.

\begin{lemma}\label{red2}
Let $f \in \mathcal{S}_{\bfk}( N, \chi )$ be a normalised eigenform. Then $f$ is $\lambda$-abelian for the class $0 \in \mathbb{F}_{\lambda}$ if and only if the set of traceless elements in $\overline{G}_{\lambda}$ can be written as a union of cosets of $[\overline{G}_{\lambda}, \overline{G}_{\lambda}] \subset \overline{G}_{\lambda}$.
\end{lemma}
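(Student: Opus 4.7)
The plan is to first repeat the argument of Proposition \ref{red1} in the stronger two-sided setting, obtaining an analogous statement at the level of $G_\lambda$, and then transfer it to $\overline{G}_\lambda$ using the fact (highlighted in the paragraph preceding the lemma) that the projection $\pi : G_\lambda \to \overline{G}_\lambda$ preserves the traceless set in both directions.

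First I would show: $f$ is $\lambda$-abelian for $0$ if and only if the traceless set $T_0 \subset G_\lambda$ is a union of cosets of $[G_\lambda, G_\lambda]$. For one direction, by Chebotarev density, as $p$ ranges over unramified primes the image $\rho_f^\lambda(\mathrm{Frob}_\mathfrak{p})$ ranges over every element of $G_\lambda$, and by Deligne $a_p(f) \equiv 0 \bmod \lambda$ exactly captures membership in $T_0$. On the other hand, by Kronecker--Weber and class field theory, the fibres of the map $G_\lambda \to \mathrm{Gal}(Z_\lambda/\mathbb{Q}) = G_\lambda/[G_\lambda,G_\lambda]$ are precisely the sets that can be detected by a congruence on $p$. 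Hence the condition in Definition \ref{lab} that the event $a_p(f) \equiv 0$ coincides with a condition $p \bmod M \in S_0$ is equivalent to $T_0$ being a union of $[G_\lambda, G_\lambda]$-cosets.

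Next I transfer this to $\overline{G}_\lambda$. Let $Z = \ker(\pi) \subset G_\lambda$ denote the subgroup of scalar matrices. Since $\pi$ preserves tracelessness in both directions, one has
\[
T_0 = \pi^{-1}\big(T_0^{\mathrm{proj}}\big),
\]
where $T_0^{\mathrm{proj}}$ denotes the traceless elements of $\overline{G}_\lambda$. Moreover $\pi([G_\lambda, G_\lambda]) = [\overline{G}_\lambda, \overline{G}_\lambda]$, so the preimage of $[\overline{G}_\lambda, \overline{G}_\lambda]$ is exactly $Z \cdot [G_\lambda, G_\lambda]$, and cosets of $[\overline{G}_\lambda, \overline{G}_\lambda]$ in $\overline{G}_\lambda$ correspond bijectively under $\pi^{-1}$ to cosets of $Z \cdot [G_\lambda, G_\lambda]$ in $G_\lambda$. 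Thus $T_0^{\mathrm{proj}}$ is a union of cosets of $[\overline{G}_\lambda, \overline{G}_\lambda]$ if and only if $T_0$ is a union of cosets of $Z \cdot [G_\lambda, G_\lambda]$.

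Finally I would close the loop by showing that, in the presence of the $Z$-invariance of $T_0$, passing from $[G_\lambda, G_\lambda]$-cosets to $Z \cdot [G_\lambda, G_\lambda]$-cosets is free. Indeed, for any scalar $z \in Z$ and $X \in T_0$ one has $\mathrm{tr}(zX) = z \cdot \mathrm{tr}(X) = 0$, so $Z \cdot T_0 = T_0$. Consequently $T_0$ is automatically a union of $Z$-cosets, and combining with a decomposition into $[G_\lambda, G_\lambda]$-cosets produces a decomposition into $Z \cdot [G_\lambda, G_\lambda]$-cosets; the converse inclusion is obvious since $[G_\lambda, G_\lambda] \subseteq Z \cdot [G_\lambda, G_\lambda]$. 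Chaining the three equivalences yields the lemma. The only point requiring genuine care is the first step, where one must invoke Chebotarev to upgrade the one-sided implication used in Proposition \ref{red1} to a biconditional; everything after that is diagram chasing through $\pi$.
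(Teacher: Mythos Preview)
Your proof is correct and follows essentially the same route as the paper's. The paper's proof is terser: it notes that the projection preserves the traceless set in both directions, that images of $[G_\lambda,G_\lambda]$-cosets are $[\overline{G}_\lambda,\overline{G}_\lambda]$-cosets while preimages of the latter are unions of the former, and then appeals to Proposition~\ref{red1}. Your explicit passage through $Z\cdot[G_\lambda,G_\lambda]$-cosets using the $Z$-invariance of $T_0$ unpacks exactly what the paper's phrase ``the preimage of a coset is a union of cosets'' is doing, and your remark that Chebotarev is needed to upgrade the one-sided statement of Proposition~\ref{red1} to a biconditional is a point the paper leaves implicit.
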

\begin{proof}
As noted above, under the reduction map $\text{GL}_2( \mathbb{F}_{\lambda} ) \to \text{PGL}_2( \mathbb{F}_{\lambda} )$, both the set of traceless elements and its complement are preserved. Also, the image of a coset of $[G_{\lambda}, G_{\lambda}]$ in $G_{\lambda}$ is again a coset of $[\overline{G}_{\lambda}, \overline{G}_{\lambda}]$ in $\overline{G}_{\lambda}$, and the preimage of a coset is a union of cosets. The claim now follows from Proposition \ref{red1}.
\end{proof}

\begin{lemma}\label{com3}
Let $n$ be a positive integer and let $D_n = \langle r, s \mid r^n = s^2 = (sr)^2 = 1 \rangle$ be the dihedral group of cardinality $2n$. Then $[D_n, D_n] = \langle r^2 \rangle$. 
\end{lemma}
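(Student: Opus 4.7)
The plan is to establish both inclusions $\langle r^2 \rangle \subseteq [D_n, D_n]$ and $[D_n, D_n] \subseteq \langle r^2 \rangle$ using only the defining relations of $D_n$.

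For the first inclusion, I would exhibit $r^2$ as an explicit commutator. The relation $(sr)^2 = 1$ combined with $s^2 = 1$ gives $srs = r^{-1}$, which can be rewritten as $sr^{-1} = rs$. Computing directly,
\[
[s, r] = s^{-1} r^{-1} s r = s r^{-1} s r = (rs)(sr) = r s^2 r = r^2.
\]
Hence $r^2 \in [D_n, D_n]$, and consequently $\langle r^2 \rangle \subseteq [D_n, D_n]$.

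For the reverse inclusion, I would pass to the quotient $D_n / \langle r^2 \rangle$ and show that it is abelian. Since $\langle r^2 \rangle$ is normal in $D_n$ (as $s r^2 s^{-1} = (srs^{-1})^2 = r^{-2} \in \langle r^2 \rangle$, and of course it is stable under conjugation by $r$), this quotient makes sense. In the quotient, the image of $r$ has order dividing $2$ and the image of $s$ has order dividing $2$, and the relation $srs^{-1} = r^{-1}$ becomes trivial because $r$ and $r^{-1}$ coincide modulo $\langle r^2 \rangle$. Hence the quotient is generated by two commuting involutions, so it is abelian. This forces $[D_n, D_n] \subseteq \langle r^2 \rangle$, completing the argument.

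Neither step presents a real obstacle; the only thing to be slightly careful about is the normality of $\langle r^2 \rangle$ and the verification of the commutator identity, both of which reduce to mechanical applications of the single nontrivial relation $srs = r^{-1}$.
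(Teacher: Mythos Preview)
Your proof is correct. The paper takes a slightly different route for the inclusion $[D_n,D_n]\subseteq\langle r^2\rangle$: rather than passing to the quotient and checking it is abelian, it simply computes every commutator by cases, observing that $[r^i,r^j]=1$, $[r^i,sr^j]=r^{2i}$, and $[sr^i,sr^j]=r^{2j-2i}$, so each generator of $[D_n,D_n]$ already lies in $\langle r^2\rangle$. Your argument instead uses the characterisation of the derived subgroup as the smallest normal subgroup with abelian quotient. Both are entirely standard; the paper's computation is marginally more explicit (and incidentally yields the forward inclusion as well, from the case $i=1$), while your quotient argument is a touch more conceptual and avoids having to enumerate all pairs.
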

\begin{proof}
Every element of $D_n$ is either of the form $r^i$ or of the form $sr^{j}$ for $i,j \in \mathbb{Z}$. Two elements of the former shape commute, whereas one may compute that $[r^i, sr^j] = r^{2i}$ and $[sr^i, sr^j] = r^{2j-2i}$.
\end{proof}

\begin{proof}[Proof of Theorem \ref{main2}.]
Theorem \ref{weaklab} showed that if $f \in \mathcal{S}_{\bfk}( N, \chi )$ is weakly $\lambda$-abelian for some $x \in \mathbb{F}_{\lambda}$ but not totally $\lambda$-abelian, then $x = 0$. In addition, Theorem \ref{weaklab0} shows that if this happens, then $\overline{G}_{\lambda}$ must be isomorphic to a dihedral group $D_n$ with $n \neq 1$ and $\ell \nmid n$. It thus remains to show that if $\overline{G}_{\lambda}$ is isomorphic to such a group, then $f$ is weakly $\lambda$-abelian for $0 \in \mathbb{F}_{\lambda}$, but not totally $\lambda$-abelian. This second part is immediate from Lemma \ref{exclusive}. For the first, essentially by Lemma \ref{red2}, for this it suffices to show that there exists a coset of $[\overline{G}_{\lambda}, \overline{G}_{\lambda}]$ on which the trace is constantly zero. By Lemma \ref{com3} above, this commutator subgroup is contained in the cyclic subgroup of rotations inside $D_n$. Therefore, there exist cosets of $[\overline{G}_{\lambda}, \overline{G}_{\lambda}]$ that consist only of reflections. By Lemma \ref{mat}, these elements are all traceless and the proof is complete.
\end{proof}

\begin{proof}[Proof of Theorem \ref{main3}.]
Theorem \ref{main2} shows that $f$ is weakly $\lambda$-abelian for the class $0 \in \mathbb{F}_{\lambda}$, but not totally $\lambda$-abelian, if and only if the projective image $\overline{G}_{\lambda} \subset \text{PGL}_2( \mathbb{F}_{\lambda} )$ is isomorphic to a dihedral group $D_n$ with $n > 1$ and $\ell \nmid n$. It remains to investigate under which conditions the adjective ``weakly'' can be removed.

By Lemma \ref{red2}, we should check when the set of traceless matrices can be written as the union of cosets of $[ G_{\lambda}, G_{\lambda} ]$ in $G_{\lambda}$. We distinguish two cases. If $n$ is odd, then $[D_n, D_n]$ is of index 2 in $D_n$ by Lemma \ref{com3}, with one coset consisting of all $n$ rotations, none of which are involutions, and the other consisting of all $n$ reflections, all of which are involutions. It thus follows that $f$ must be $\lambda$-abelian if $\ell$ is odd.

If $n$ is even, then $\ell$ must be odd and $[D_n, D_n]$ is of index 4 in $D_n$ by Lemma \ref{com3}. This time, the involution $r^{n/2}$ is contained in the same coset as elements with a different order as soon as $n \geq 4$, and therefore $f$ cannot be $\lambda$-abelian. Only in case of $\overline{G}_{\lambda} \cong D_2 = C_2^2$ will $f$ be $\lambda$-abelian.
\end{proof}

\subsection{Dickson's subgroup classification theorem}\label{dicksonsec}

To investigate the condition of being semi-$\lambda$-abelian, we cannot rely on purely elementary considerations and we will need the following fundamental result by Dickson. It classifies what kinds of subgroups can occur in $\text{PGL}_2( k )$ for finite fields $k$, and was recently refined by Faber in \cite{Faber}.
\begin{stelling}\label{dickson}\emph{(Dickson)}
Let $\ell$ be a prime and let $G$ be a finite subgroup of $\emph{PGL}_2(\overline{\mathbb{F}}_{\ell})$. Then a conjugate of $G$ is one of the following groups:
\begin{itemize}
\item A finite subgroup of the Borel subgroup of upper triangular matrices;
\item $\emph{PSL}_2(\mathbb{F}_{\ell^r})$ or $\emph{PGL}_2(\mathbb{F}_{\ell^r})$ for some positive integer $r$;
\item A dihedral group $D_n$ for some positive integer $n$ coprime to $\ell$;
\item An exceptional subgroup isomorphic to either $A_4$, $S_4$ or $A_5$.
\end{itemize}
\end{stelling}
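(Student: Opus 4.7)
The plan is to split on whether $\ell$ divides $|G|$: the non-modular case $\gcd(|G|,\ell)=1$ reduces to the classical classification over $\mathbb{C}$, whereas the modular case $\ell \mid |G|$ is pinned down by Sylow analysis to either a Borel subgroup or one of the full groups $\text{PSL}_2(\mathbb{F}_{\ell^r})$, $\text{PGL}_2(\mathbb{F}_{\ell^r})$.

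In the non-modular case, I would first observe that any finite subgroup $G \subset \text{PGL}_2(\overline{\mathbb{F}}_\ell)$ is contained in $\text{PGL}_2(\mathbb{F}_{\ell^n})$ for some $n$. Passing to a central extension by a Schur cover and invoking Brauer's theorem on lifting modular representations whose order is coprime to the characteristic, one obtains a projective embedding $G \hookrightarrow \text{PGL}_2(\overline{\mathbb{Q}}_\ell) \hookrightarrow \text{PGL}_2(\mathbb{C})$. The classical classification of finite subgroups of $\text{PGL}_2(\mathbb{C})$, equivalently of $\text{SO}_3(\mathbb{R})/\{\pm 1\}$ -- deducible either from a Riemann-Hurwitz calculation on $\mathbb{P}^1 \to \mathbb{P}^1/G$ or from the three-axes combinatorial argument -- yields exactly the cyclic, dihedral, $A_4$, $S_4$, and $A_5$ subgroups. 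Cyclic ones sit in a torus inside the Borel; the dihedral ones automatically satisfy $\ell \nmid n$ in this branch; the rest produce the three exceptional possibilities.

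In the modular case, pick a nontrivial Sylow $\ell$-subgroup $P \subset G$. Every element of order $\ell$ in $\text{PGL}_2(\overline{\mathbb{F}}_\ell)$ lifts to a unipotent matrix and therefore fixes a \emph{unique} point of $\mathbb{P}^1(\overline{\mathbb{F}}_\ell)$. A short computation shows that two unipotents sharing a fixed point commute in $\text{PGL}_2$, so the elements of $P$ all share one common fixed point which I may conjugate to $\infty$; this places $P$ inside the unipotent radical of the upper-triangular Borel $B$. If $P \trianglelefteq G$, then $G$ stabilises $\{\infty\}$ and hence $G \subset B$. Otherwise, two distinct Sylow $\ell$-subgroups meet trivially and their fixed points form a $G$-orbit $\Omega$ on which $G$ acts doubly transitively, with point stabilisers of Frobenius type. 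The subgroup generated by $P$ and an opposite Sylow can then be identified, by a direct Bruhat-style matrix calculation, with $\text{SL}_2(\mathbb{F}_{\ell^r})$ for some $r \geq 1$; sandwiching $G$ between $\text{PSL}_2(\mathbb{F}_{\ell^r})$ and $\text{PGL}_2(\mathbb{F}_{\ell^r})$ via the determinant finishes the case.

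The main obstacle is precisely the last identification: extracting from abstract Sylow and double-transitivity data the fact that the group generated by two non-commuting $\ell$-Sylows is literally $\text{SL}_2$ over a subfield of $\overline{\mathbb{F}}_\ell$. This amounts to Zassenhaus' classification of sharply $2$-transitive Frobenius groups, and the delicate part is exhibiting a subfield of $\overline{\mathbb{F}}_\ell$ inside the additive action of a maximal torus on its opposite unipotent radical -- turning group-theoretic data into a field. Everything else -- the unipotent fixed-point lemma, Brauer lifting in coprime characteristic, and the classification over $\mathbb{C}$ -- is essentially standard bookkeeping once those two pillars are in place.
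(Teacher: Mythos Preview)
Your approach is genuinely different from the paper's, and considerably more ambitious: the paper simply cites Faber's refinement (Theorem~D in \cite{Faber}), which lists ten conjugacy classes of subgroups of $\text{PGL}_2(\mathbb{F}_{\ell^r})$, and then sorts those ten cases into the four bullets of the statement. You are instead sketching a self-contained proof.

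Your non-modular branch is sound in outline. The modular branch, however, has a real gap. The dichotomy you state --- either the Sylow $\ell$-subgroup is normal (hence $G$ lies in a Borel), or two Sylows generate a copy of $\text{PSL}_2(\mathbb{F}_{\ell^r})$ that then sandwiches $G$ inside $\text{PGL}_2(\mathbb{F}_{\ell^r})$ --- is not exhaustive. Two concrete counterexamples:
\begin{itemize}
\item $G = D_n$ with $n > 1$ odd, in characteristic $\ell = 2$. Here $\ell \mid |G| = 2n$, the $n$ reflections are the unipotents, no Sylow $2$-subgroup is normal, and the action on their fixed points is \emph{not} doubly transitive (the point stabiliser has order $2$). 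Two reflections generate $D_n$, which is not a $\text{PSL}_2$.
\item $G = A_5$ in characteristic $\ell = 3$. Here $A_5 \subset \text{PSL}_2(\mathbb{F}_9)$, the Sylow $3$-subgroups have order $3$ and are not normal, and two of them do generate $\text{PSL}_2(\mathbb{F}_3) \cong A_4$ --- but $A_4$ is not normal in $A_5$, and $A_5$ does not embed in $\text{PGL}_2(\mathbb{F}_3) \cong S_4$. The subgroup generated by \emph{all} the Sylow $3$-subgroups is $A_5$ itself, which is not any $\text{PSL}_2(\mathbb{F}_{3^r})$.
\end{itemize}
Both of these must land in the dihedral or exceptional bullets, yet your modular argument has no mechanism to produce them. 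In the standard proofs (Dickson, Suzuki) the sandwiching step only goes through once one knows that the Sylow $\ell$-subgroup is the full additive group of a subfield and that any two distinct Sylows already generate the normal closure; when $|P| = \ell$ is small, residual cases (precisely $A_5$ in characteristic $3$, and $\ell = 2$ dihedral groups) have to be handled separately. Your outline would need an extra case analysis here to be complete.
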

\begin{proof}
We appeal to the classification by Faber in Theorem D of \cite{Faber}, which characterises all conjugacy classes of subgroups in $\text{PGL}_2( \mathbb{F}_{\ell^r} )$ for any prime $\ell$ and integer $r \geq 1$, distinguishing 10 different cases. Cases 1, 2 and 10 can, possibly after passing to a quadratic field extension, be viewed as a subgroup of the upper triangular matrices. Cases 3, 4 and 5 are dihedral and cases 6, 7 and 8 are exceptional, leaving only case 9 concerning $\text{PSL}_2$ and $\text{PGL}_2$ coming from smaller fields.
\end{proof}

We stress that, especially when the finite fields in question are very small, these cases need not be mutually exclusive. For example, the Borel subgroup of $\text{PGL}_2( \mathbb{F}_4 ) \cong A_5$ is abstractly isomorphic to $A_4$. To illustrate the power of Dickson's result, we give an alternative proof of Theorem \ref{main3} as obtained in the previous subsections, which has the advantage of being comparatively quick, but the downside of being less transparent. Theorem \ref{dickson} inspires us to record the following sequence of lemmas.

\begin{lemma}\label{com2}
Let $k$ be a field with $\# k > 3$. Then $[\emph{SL}_2(k), \emph{SL}_2(k)] = \emph{SL}_2(k)$. As a result,
\[
[\emph{PSL}_2(k), \emph{PSL}_2(k)] = \emph{PSL}_2(k) \quad \text{and} \quad [ \emph{PGL}_2(k), \emph{PGL}_2(k) ] = \emph{PSL}_2(k).
\]
\end{lemma}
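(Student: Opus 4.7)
The plan is to prove the $\mathrm{SL}_2$ statement first, and then deduce the two projective statements as formal consequences. For the $\mathrm{SL}_2$ part, I would use two classical facts: the generation of $\mathrm{SL}_2(k)$ by the elementary transvections $E_{12}(a) = \left(\begin{smallmatrix} 1 & a \\ 0 & 1 \end{smallmatrix}\right)$ and $E_{21}(a) = \left(\begin{smallmatrix} 1 & 0 \\ a & 1 \end{smallmatrix}\right)$ for $a \in k$ (a standard row-reduction argument), and the explicit conjugation identity
\[
\begin{pmatrix} t & 0 \\ 0 & t^{-1} \end{pmatrix} \begin{pmatrix} 1 & a \\ 0 & 1 \end{pmatrix} \begin{pmatrix} t^{-1} & 0 \\ 0 & t \end{pmatrix} \begin{pmatrix} 1 & -a \\ 0 & 1 \end{pmatrix} = \begin{pmatrix} 1 & a(t^2 - 1) \\ 0 & 1 \end{pmatrix},
\]
which exhibits $E_{12}(a(t^2-1))$ as a commutator of two elements of $\mathrm{SL}_2(k)$. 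The symmetric identity with the transpose handles $E_{21}$.

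The main (and really only) obstacle is the case analysis on $k$: the identity above produces an arbitrary elementary matrix only if there exists some $t \in k^{\times}$ with $t^2 \neq 1$, for then $a \mapsto a(t^2-1)$ is a bijection on $k$. This is precisely where $\#k > 3$ enters: the equation $t^2 = 1$ has at most two solutions in $k$, so if $\#k \geq 4$ there is always a $t \in k^{\times}$ with $t^2 - 1 \neq 0$ (in characteristic $2$ one takes any $t \neq 0, 1$). Hence every transvection lies in $[\mathrm{SL}_2(k), \mathrm{SL}_2(k)]$, and by the first generation fact this forces $[\mathrm{SL}_2(k), \mathrm{SL}_2(k)] = \mathrm{SL}_2(k)$.

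For the projective statements, the argument is formal. The surjective projection $\mathrm{SL}_2(k) \twoheadrightarrow \mathrm{PSL}_2(k)$ sends commutators to commutators, so $[\mathrm{PSL}_2(k), \mathrm{PSL}_2(k)]$ is the image of $[\mathrm{SL}_2(k), \mathrm{SL}_2(k)] = \mathrm{SL}_2(k)$, which is $\mathrm{PSL}_2(k)$. For $\mathrm{PGL}_2(k)$, the determinant modulo squares induces a surjection $\mathrm{PGL}_2(k) \twoheadrightarrow k^{\times}/(k^{\times})^2$ with kernel $\mathrm{PSL}_2(k)$; as the target is abelian, the commutator subgroup is contained in $\mathrm{PSL}_2(k)$. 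The reverse inclusion is immediate from $\mathrm{PSL}_2(k) = [\mathrm{PSL}_2(k), \mathrm{PSL}_2(k)] \subset [\mathrm{PGL}_2(k), \mathrm{PGL}_2(k)]$, giving equality.
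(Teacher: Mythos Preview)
Your proof is correct and follows essentially the same approach as the paper's: the paper's proof simply states that elementary matrices generate $\mathrm{SL}_2(k)$ and that each is a commutator when $\#k > 3$, leaving the verification to the reader, and you have supplied exactly those details. For the projective consequences you use the determinant-mod-squares map where the paper more briefly notes that any commutator has determinant $1$; the two arguments are equivalent.
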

\begin{proof}
The first assertion is a standard fact; one first shows that elementary matrices generate the special linear group, and subsequently that every elementary matrix is a commutator if $\# k > 3$. We leave these checks to the curious reader. Since quotients of perfect groups are perfect, the claim about $\text{PSL}_2(k)$ follows immediately. Finally, since the determinant of any commutator must be 1, the final claim follows as well. 
\end{proof}

\begin{lemma}\label{com4}
We have that $[A_4, A_4] \cong C_2 \times C_2$. In addition, $[S_4,S_4] = A_4$ and $[A_5, A_5] = A_5$.
\end{lemma}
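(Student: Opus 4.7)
The plan is to handle each of the three identities separately, using the well-known simple structure of $A_4$, $S_4$, and $A_5$, together with the standard characterization of the commutator subgroup as the smallest normal subgroup with abelian quotient.

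For $A_4$, I would first note that $A_4$ contains the Klein four-group $V = \{1, (12)(34), (13)(24), (14)(23)\}$ as a normal subgroup, with $A_4/V \cong C_3$ abelian. This gives the inclusion $[A_4, A_4] \subseteq V$. For the reverse inclusion, since $A_4$ is non-abelian, the commutator subgroup is non-trivial, and because $V$ is the unique non-trivial proper normal subgroup of $A_4$ (one of the standard facts used in showing that $A_4$ has no subgroup of order $6$), this forces $[A_4, A_4] = V \cong C_2 \times C_2$. Alternatively, one directly verifies that a single commutator such as $[(123),(12)(34)]$ produces a non-identity double transposition.

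For $S_4$, the sign homomorphism $\mathrm{sgn} : S_4 \to \{\pm 1\}$ has kernel $A_4$ with abelian quotient, giving $[S_4, S_4] \subseteq A_4$. For the reverse inclusion, I would exhibit every $3$-cycle as a commutator in $S_4$; a short computation shows that $[(ab),(ac)] = (acb)$, so that all $3$-cycles lie in $[S_4, S_4]$. Since $A_4$ is generated by its $3$-cycles, this yields $A_4 \subseteq [S_4, S_4]$ and hence equality.

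For $A_5$, the argument is immediate from the simplicity of $A_5$: the commutator subgroup is normal in $A_5$, hence equal to $\{1\}$ or $A_5$, and since $A_5$ is non-abelian the first option is impossible. No case is a serious obstacle; the only mild subtlety is citing or re-deriving the fact that $V$ is the unique proper non-trivial normal subgroup of $A_4$, but this can be avoided entirely by directly exhibiting a non-trivial commutator in $A_4$ together with the observation that the $A_4/V \cong C_3$ quotient forces the commutator subgroup into $V$, and $V$ has no proper subgroup normal in $A_4$ other than the trivial one (the three order-$2$ subgroups of $V$ are permuted transitively by conjugation by a $3$-cycle).
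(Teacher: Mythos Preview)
Your proposal is correct and follows essentially the same approach as the paper: both use the Klein four-group $V \trianglelefteq A_4$ for the first claim, the standard fact $[S_n,S_n]=A_n$ for the second, and the simplicity of $A_5$ for the third. Your write-up is simply more detailed than the paper's terse citation of these well-known facts.
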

\begin{proof}
The Klein 4-group generated by the elements of order at most 2 is normal in $A_4$. In addition, it is well known that the commutator of $S_n$ is $A_n$ for $n \geq 4$, and that $A_n$ is simple for $n \geq 5$.  
\end{proof}

\begin{proof}[Second proof of Theorem \ref{main3}.]
By Theorem \ref{weaklab} and Lemma \ref{red2}, we should check whether the subset of traceless elements in $\overline{G}_{\lambda}$ is a union of cosets of $[\overline{G}_{\lambda}, \overline{G}_{\lambda}]$ in $\overline{G}_{\lambda}$. If a conjugate of $\overline{G}_{\lambda}$ is contained in the Borel subgroup of upper triangular matrices, Theorem \ref{main1} shows that $f$ is totally $\lambda$-abelian. We therefore reduce to checking the other cases from Theorem \ref{dickson}. Suppose that $f$ is $\lambda$-abelian for the class $0 \in \mathbb{F}_{\lambda}$.

First, if $\overline{G}_{\lambda}$ is equal to $\text{PSL}_2(\mathbb{F}_{\ell^r})$ or $\text{PGL}_2(\mathbb{F}_{\ell^r})$ for some positive integer $r$, then if $\ell^r > 3$, Lemma \ref{com2} shows that the set of traceless elements is not stable under multiplication by $[\overline{G}_{\lambda}, \overline{G}_{\lambda}]$, and therefore $f$ cannot be $\lambda$-abelian. If $\#k = 2$, then $\text{PSL}_2( \mathbb{F}_2 ) \cong \text{PGL}_2( \mathbb{F}_2 ) \cong S_3$ and if $\# k = 3$, then $\text{PSL}_2( \mathbb{F}_3 ) \cong A_4$ and $\text{PGL}_2( \mathbb{F}_3 ) \cong S_4$. Since these are dihedral or exceptional, we will address these cases momentarily. 

The case that $\overline{G}_{\lambda}$ is a dihedral group $D_n$ for some positive integer $n$ coprime to $\ell$ has already been explored in Subsection \ref{labclass}. Finally, if $\overline{G}_{\lambda}$ is isomorphic to either $A_4$, $S_4$ or $A_5$, we appeal to Lemma \ref{com4}. The commutator subgroup of $S_4$ is of index 2, but the trivial coset contains elements of both order 2 and order 3. By Lemma \ref{mat}, this means that it contains both traceless and non-traceless matrices; whence $f$ is not $\lambda$-abelian. In the case of $A_5$, the same conclusion follows immediately. In the case of $A_4$, the commutator subgroup consists of precisely those elements of order at most 2, and therefore $f$ only is $\lambda$-abelian in case $\ell = 2$. By Theorem D.6 in \cite{Faber}, if $A_4 \subset \text{GL}_2( \mathbb{F}_{\lambda} )$, then $[ \mathbb{F}_{\lambda} : \mathbb{F}_2 ]$ must be even. However, $A_4 \subset \text{GL}_2( \mathbb{F}_4 )$ is precisely the full Borel subgroup of upper triangular matrices, so now $f$ is in fact totally $\lambda$-abelian.

We have now shown that if $f$ is $\lambda$-abelian, then either $f$ is totally $\lambda$-abelian, or $\overline{G}_{\lambda} \cong D_n$ for $n = 2$ or some odd integer $n \geq 1$ such that $\ell \nmid n$. It remains to verify that for $n \neq 1$, the eigenform $f$ cannot be totally $\lambda$-abelian. This is immediate by Lemma \ref{exclusive} and Theorem \ref{main1} and the proof is complete.
\end{proof}

\subsection{Classification of being semi-$\lambda$-abelian}

The following result transforms the condition of being semi-$\lambda$-abelian into group theory.
\begin{lemma}\label{red3}
A normalised cuspidal eigenform $f \in \mathcal{S}_{\bfk}( N, \chi )$ is semi-$\lambda$-abelian for some $f$-proper class $x \in \mathbb{F}_{\lambda}$ if and only if there is a coset of $[G_{\lambda}, G_{\lambda}]$ in which no element has trace $x$. 
\end{lemma}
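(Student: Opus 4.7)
The plan is to mirror the argument of Proposition \ref{red1}, combining Deligne's theorem on the trace of Frobenius, Kronecker--Weber applied to the maximal abelian subextension $Z_\lambda \subset L_\lambda$, and the Chebotarev density theorem.

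The first step is to translate both sides of the equivalence into statements about the finite group $G_\lambda$. On the Galois side, by Deligne the condition $a_p(f) \equiv x \bmod \lambda$ is equivalent to $\mathrm{tr}\bigl(\rho_f^\lambda(\mathrm{Frob}_\mathfrak{p})\bigr) = x$; this carves out a conjugation-invariant subset $U \subset G_\lambda$ of trace-$x$ elements. On the congruence side, as in the proof of Proposition \ref{red1}, a condition of the shape ``$p \bmod M \in S_x$'' can, after enlarging $M$ so that $\mathbb{Q}(\zeta_M) \supset Z_\lambda$, be reformulated as a condition on the coset $\rho_f^\lambda(\mathrm{Frob}_\mathfrak{p}) [G_\lambda, G_\lambda] \in G_\lambda/[G_\lambda,G_\lambda]$. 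In particular, any such $S_x$ gives rise to a union $T_x$ of cosets of $[G_\lambda, G_\lambda]$ in $G_\lambda$, and $T_x$ is a proper subset of $G_\lambda$ precisely when $S_x$ is proper.

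For the forward direction, assuming $f$ is semi-$\lambda$-abelian for $x$ with witness $S_x \subsetneq (\mathbb{Z}/M\mathbb{Z})^\times$, I would let $T_x \subsetneq G_\lambda$ be the associated proper union of $[G_\lambda, G_\lambda]$-cosets; the hypothesis then says that every Frobenius image of trace $x$ lies in $T_x$. By Chebotarev, every element of $G_\lambda$ is $\rho_f^\lambda(\mathrm{Frob}_\mathfrak{p})$ for some unramified $p \nmid N\ell$, so in fact $U \subseteq T_x$, and since $T_x \subsetneq G_\lambda$ at least one full coset of $[G_\lambda, G_\lambda]$ is disjoint from $U$, i.e.\ contains no element of trace $x$. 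Conversely, given any coset $C$ of $[G_\lambda, G_\lambda]$ with no trace-$x$ element, I would set $T_x := G_\lambda \setminus C$ and read off a proper $S_x \subsetneq (\mathbb{Z}/M\mathbb{Z})^\times$ encoding it via the dictionary above, from which the semi-$\lambda$-abelian property follows immediately.

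The only potential obstacle is the verification that ``proper'' is preserved by the correspondence between subsets of $(\mathbb{Z}/M\mathbb{Z})^\times$ and unions of $[G_\lambda, G_\lambda]$-cosets. This is a routine consequence of the surjectivity of the natural map $(\mathbb{Z}/M\mathbb{Z})^\times \twoheadrightarrow G_\lambda/[G_\lambda, G_\lambda]$ once $M$ has been enlarged as above, and it is the key reason one is allowed to pass freely between a proper subset $S_x$ and a proper $T_x$. The remainder of the argument is essentially formal and runs in close parallel to that of Proposition \ref{red1}.
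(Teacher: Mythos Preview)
Your approach is exactly the paper's: both proofs reduce to the dictionary, established in Proposition~\ref{red1}, between congruence conditions on $p$ and cosets of $[G_\lambda,G_\lambda]$ in $G_\lambda$, and then observe that a \emph{proper} set of residues exists if and only if some coset avoids the trace $x$. The paper dispatches this in two sentences by pointing back to Proposition~\ref{red1}; your write-up spells out the Chebotarev step more explicitly, which is fine.

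One point deserves tightening. In your ``obstacle'' paragraph you justify the passage from a proper $S_x \subsetneq (\mathbb{Z}/M\mathbb{Z})^\times$ to a proper union $T_x$ of cosets by invoking surjectivity of $(\mathbb{Z}/M\mathbb{Z})^\times \twoheadrightarrow G_\lambda/[G_\lambda,G_\lambda]$. Surjectivity goes the wrong way: it guarantees that a proper $T_x$ pulls back to a proper $S_x$ (the converse direction), but a proper $S_x$ can certainly have full image. What actually makes the forward direction work is Chebotarev in the compositum $L_\lambda \cdot \mathbb{Q}(\zeta_M)$: if every coset contained a trace-$x$ element, then for \emph{every} residue $r$ one could realise a compatible pair $(g,r)$ with $\mathrm{tr}(g)=x$ as a Frobenius, forcing $S_x=(\mathbb{Z}/M\mathbb{Z})^\times$. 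Equivalently, one may first shrink $S_x$ to the set of residues actually hit by trace-$x$ primes, which is then automatically a union of fibres. With this correction your argument is complete and matches the paper's.
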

\begin{proof}
Recall from the proof of Proposition \ref{red1} that the only information that is determined by a congruence condition on $p$ is the coset of $[G_{\lambda}, G_{\lambda}]$ inside $G_{\lambda}$ that $\text{Frob}_{\mathfrak{p}}$ belongs to. It is now clear that we can write down a \emph{proper} subset of congruence classes if and only if one of the cosets of $[G_{\lambda}, G_{\lambda}]$ misses the trace $x$.
\end{proof}

\begin{lemma}\label{cominj}
Let $G \subset \emph{GL}_2( \mathbb{F}_{\lambda} )$ be a subgroup. Then the map
\[
[G,G] \to [\overline{G}, \overline{G}] \subset \emph{PSL}_2( \mathbb{F}_{\lambda} ) 
\]
is either an isomorphism, or has a kernel of size 2, in which case $\ell \neq 2$.
\end{lemma}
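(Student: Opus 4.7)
The plan is to recognise that the map in question is just the restriction of the projection $\pi \colon \mathrm{GL}_2(\mathbb{F}_\lambda) \to \mathrm{PGL}_2(\mathbb{F}_\lambda)$ to the subgroup $[G,G]$, so the kernel is $[G,G] \cap Z$, where $Z = \ker(\pi)$ consists of all nonzero scalar matrices. Surjectivity onto $[\overline{G}, \overline{G}]$ then follows from the general fact that a group homomorphism $f\colon G \to H$ (here composition with $\pi$, factoring through $\overline{G}$) sends $[G,G]$ onto $[f(G), f(G)]$, because $f([x,y]) = [f(x), f(y)]$ and commutators generate the commutator subgroup. Also, $[\overline{G}, \overline{G}] \subset \mathrm{PSL}_2(\mathbb{F}_\lambda)$ since any commutator has determinant $1$, so the projective image lies in the image of $\mathrm{SL}_2$ in $\mathrm{PGL}_2$.

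For the kernel, I would argue that any commutator $[x,y] \in G$ has determinant $1$, so if moreover it is scalar, say $cI$, then $c^2 = \det(cI) = 1$, forcing $c = \pm 1$. Hence $[G,G] \cap Z \subset \{\pm I\}$. When $\ell = 2$, we have $-I = I$, so the kernel is trivial and the map is an isomorphism. When $\ell \neq 2$, the kernel is either $\{I\}$ or $\{\pm I\}$, giving the two cases of the lemma.

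There is no serious obstacle here; the only thing to double-check is that every element of $[\overline{G}, \overline{G}]$ really is the image of an element of $[G,G]$ (not merely of a product of images of commutators). This is immediate, however, because $\pi$ is a group homomorphism, so $\pi$ sends the subgroup generated by commutators in $G$ onto the subgroup generated by their images, which are the commutators in $\overline{G}$. This completes the proof sketch.
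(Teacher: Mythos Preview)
Your proof is correct and follows essentially the same approach as the paper: both identify the kernel as the scalar matrices in $[G,G]$, observe that such matrices have determinant $1$ and hence lie in $\{\pm I\}$, and note that surjectivity is clear. Your write-up simply spells out in more detail what the paper compresses into two sentences.
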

\begin{proof}
It is clear that the map is surjective. The claim about the kernel is immediate from the observation that this kernel consists of all scalar matrices in $[G,G]$. Since all of these matrices have determinant 1, this leaves only room for the identity and its negative.
\end{proof}

We will use Dickson's theorem to reduce to a case distinction as illustrated in the previous subsection. By Theorem \ref{main1}, we may disregard the case that $G_{\lambda}$ can be conjugated into the Borel subgroup.

\begin{lemma}\label{trlem1}
Let $G \subset \emph{GL}_2( k )$ be a subgroup and let $F \subset k$ be a subfield with $\# F > 3$. Suppose that $\overline{G} \subset \emph{PGL}_2( k )$ is isomorphic to $\emph{PSL}_2( F )$ or $\emph{PGL}_2( F )$. The image of the trace map on $[G,G]$ then equals $F$.
\end{lemma}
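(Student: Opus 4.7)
The plan is to first reduce, by conjugating $G$ inside $\text{GL}_2(k)$, to the situation where $\overline{G}$ is the standard image of $\text{PSL}_2(F)$ or $\text{PGL}_2(F)$ in $\text{PGL}_2(k)$ coming from the inclusion $F \hookrightarrow k$. This is precisely case 9 of Faber's classification invoked in Theorem \ref{dickson}, and since conjugation in $\text{GL}_2(k)$ affects neither the trace nor the commutator subgroup, the reduction is harmless.

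After this, I would let $\widetilde{G}$ denote $\text{SL}_2(F)$ or $\text{GL}_2(F)$ according to the case, and let $Z = Z(\text{GL}_2(k))$ denote the subgroup of scalar matrices; then the full preimage of $\overline{G}$ in $\text{GL}_2(k)$ equals $Z \cdot \widetilde{G}$. Since $G$ surjects onto $\overline{G}$, we have $G \cdot Z = Z \cdot \widetilde{G}$, so every element of $\widetilde{G}$ agrees with some element of $G$ up to a scalar matrix. The crucial observation is that commutators are insensitive to central factors: for $a_i = z_i g_i$ with $z_i \in Z$ and $g_i \in G$ one has $[a_1, a_2] = [g_1, g_2]$. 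Applying this in both directions yields $[G, G] = [\widetilde{G}, \widetilde{G}]$, and then Lemma \ref{com2} (together with the fact that every commutator already has determinant $1$, so that $[\text{GL}_2(F), \text{GL}_2(F)] = [\text{SL}_2(F), \text{SL}_2(F)]$) gives $[G, G] = \text{SL}_2(F)$, using the hypothesis $\# F > 3$.

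From here the lemma is immediate: every element of $\text{SL}_2(F)$ has trace in $F$, and conversely every $t \in F$ is realised as the trace of the companion matrix $\left(\begin{smallmatrix} t & -1 \\ 1 & 0 \end{smallmatrix}\right) \in \text{SL}_2(F)$. The main obstacle I expect is the conjugation reduction in the first step; concretely, one needs that any subgroup of $\text{PGL}_2(k)$ abstractly isomorphic to $\text{PSL}_2(F)$ or $\text{PGL}_2(F)$ is conjugate in $\text{GL}_2(k)$ to the standard copy, which is exactly what is recorded in case 9 of Faber's classification. An alternative route, avoiding this appeal, would be to argue via Skolem--Noether applied to the faithful two-dimensional projective representation theory of these groups.
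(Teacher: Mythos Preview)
Your proof is correct and follows essentially the same route as the paper: both conjugate via Faber's classification (Theorem D, case 9) to assume $\overline{G}$ is the standard copy of $\text{PSL}_2(F)$ or $\text{PGL}_2(F)$, and both conclude $[G,G] = \text{SL}_2(F)$ using Lemma~\ref{com2}. The only difference is in the middle step: the paper observes via Lemma~\ref{cominj} that $[G,G]$ sits inside $\text{SL}_2(F)$ with index at most $2$ and then invokes perfectness to rule out a proper index-$2$ subgroup, whereas you argue directly that commutators are insensitive to central factors, giving $[G,G] = [\widetilde{G},\widetilde{G}]$ in one stroke; your packaging is slightly cleaner but not substantively different.
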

\begin{proof}
By Theorem D in \cite{Faber}, there are unique conjugacy classes of subgroups isomorphic to $\text{PSL}_2( F )$ and $\text{PGL}_2( F )$. It follows that in the right basis we may assume that $\overline{G}$ is in fact equal to the canonical subgroups $\text{PSL}_2( F )$ and $\text{PGL}_2( F )$ respectively. From Lemma \ref{com2} it now follows that $[ \overline{G}, \overline{G} ] \cong \text{PSL}_2( F )$. The intersection of the preimage of this group under the projection map $\text{GL}_2( k ) \to \text{PGL}_2( k )$ with the subgroup $\text{SL}_2( k )$ is precisely $\text{SL}_2( F )$. It follows from Lemma \ref{cominj} that $[G,G] \subset \text{SL}_2( F )$ is a subgroup of index at most 2. But again by Lemma \ref{com2}, it holds that $[\text{SL}_2( F ), \text{SL}_2( F )] = \text{SL}_2( F )$ and therefore $\text{SL}_2( F )$ does not contain an index 2 subgroup. We conclude that $[G,G] = \text{SL}_2( F )$ and we are done.
\end{proof}

\begin{lemma}\label{trlem2}
Let $G \subset \emph{GL}_2( k )$ be a subgroup with the property that $\overline{G} \cong A_4 \subset \emph{PGL}_2( k )$. Suppose that $\emph{char}(k) = \ell$ is odd. Then the image of the trace map on $[G,G]$ equals $\{ 0, \pm 2 \}$.
\end{lemma}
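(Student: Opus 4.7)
The plan is to combine Lemma \ref{com4}, Lemma \ref{mat} and Lemma \ref{cominj} with the observation that for $\ell$ odd the only non-identity element of $\text{SL}_2(k)$ of order dividing $2$ is $-1$. By Lemma \ref{com4}, the group $[\overline{G}, \overline{G}]$ is the Klein four-group $V_4$, whose three non-identity elements are projective involutions; Lemma \ref{mat} identifies these with the traceless classes in $\overline{G}$ since $\ell$ is odd. Lemma \ref{cominj} then furnishes a surjection $[G, G] \twoheadrightarrow [\overline{G}, \overline{G}]$ whose kernel is either trivial or equal to $\{\pm 1\}$.

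The main step, and the only real obstacle, is to rule out the trivial-kernel case. Since every commutator has determinant $1$, we have $[G,G] \subset \text{SL}_2(k)$; were the surjection an isomorphism, then $[G,G]$ would contain three distinct non-identity involutions in $\text{SL}_2(k)$. However, any involution in $\text{SL}_2(k)$ with $\ell$ odd has eigenvalues among $\pm 1$ whose product equals $\det = 1$, which forces the matrix to equal $\pm 1$. Thus only the kernel $\{\pm 1\}$ case is possible and we conclude that $[G,G]$ has order $8$ and contains $\pm 1$.

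With this in hand, the trace computation is immediate. The two central elements $\pm 1$ contribute traces $\pm 2$, which are distinct since $\ell \neq 2$. Each of the remaining six elements $X$ projects to a non-trivial involution in $V_4$, so $X^2$ lies in the kernel $\{\pm 1\}$. The case $X^2 = 1$ is ruled out exactly as above, so $X^2 = -1$; since $\det(X) = 1$, the Cayley--Hamilton identity recorded in the proof of Lemma \ref{mat} then yields $\text{tr}(X) \cdot X = X^2 + 1 = 0$ and hence $\text{tr}(X) = 0$ as $X$ is invertible. Combining the contributions, the image of the trace map on $[G,G]$ is exactly $\{0, 2, -2\}$. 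Note that this argument never requires identifying $[G,G]$ as an abstract group (whether $Q_8$, $D_4$, or otherwise); only its centre and its image in $[\overline{G}, \overline{G}]$ enter.
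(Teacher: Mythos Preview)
Your proof is correct. The overall structure matches the paper's: both identify $[\overline{G},\overline{G}] \cong C_2 \times C_2$ via Lemma~\ref{com4}, observe that the three non-trivial classes are traceless by Lemma~\ref{mat}, and reduce the problem to showing $-1 \in [G,G]$. The difference lies entirely in how this last point is established.

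The paper quotes Proposition~4.13 of \cite{Faber} to write down, in a suitable $K$-basis, the eight explicit matrices forming the preimage of $C_2 \times C_2$ in $\mathrm{SL}_2(K)$, and then simply observes that one of them squares to $-1$. Your argument is more intrinsic: you note that if the surjection $[G,G] \twoheadrightarrow C_2 \times C_2$ of Lemma~\ref{cominj} had trivial kernel, then $[G,G] \subset \mathrm{SL}_2(k)$ would contain three distinct non-identity involutions, whereas in odd characteristic $\mathrm{SL}_2(k)$ has only the single involution $-1$ (an element of order~$2$ with determinant~$1$ is diagonalisable with both eigenvalues equal, hence scalar). This forces the kernel to be $\{\pm 1\}$ without any appeal to Faber's explicit models. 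Your approach is therefore more self-contained for this lemma in isolation; the paper's choice, on the other hand, sets up the explicit-matrix framework that it reuses uniformly in Lemmas~\ref{trlem3} and~\ref{trlem4}.
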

\begin{proof}
From Lemma \ref{com4} we deduce that $[ \overline{G}, \overline{G} ] \cong C_2 \times C_2$ consists of three involutions and the trivial class. Since the former will be traceless by Lemma \ref{mat} and the latter can only lift to $\pm1$, to prove the lemma we need only show that $-1 \in [G,G]$. Proposition 4.13 in \cite{Faber} shows that in the right $K$-basis, the preimage of this group in $\text{SL}_2( K )$ is contained in the subgroup consisting of the matrices
\[
\pm \begin{pmatrix} 1 & 0 \\ 0 & 1 \end{pmatrix}, \quad \pm \begin{pmatrix} i & 0 \\ 0 & -i \end{pmatrix}, \quad \pm \begin{pmatrix} 0 & 1 \\ -1 & 0 \end{pmatrix} \quad \text{and} \quad \pm \begin{pmatrix} 0 & i \\ i & 0 \end{pmatrix}.
\]
Since the second matrix squares to $-1$, it follows that $-1 \in [G,G]$ and we are done.
\end{proof}

\begin{lemma}\label{trlem3}
Let $G \subset \emph{GL}_2( k )$ be a subgroup with the property that  $\overline{G} \cong S_4 \subset \emph{PGL}_2( k )$. Then $\ell \neq 2$ and the image of the trace map on $[G,G]$ equals $\{ 0, \pm 1, \pm 2 \}$.
\end{lemma}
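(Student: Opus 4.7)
The plan is to adapt the strategy of Lemma \ref{trlem2}: the key idea is that $[G,G]$ will turn out to be the full preimage of $[\overline{G}, \overline{G}] = A_4$ in $\text{SL}_2(k)$, namely the binary tetrahedral group $2T$ of order $24$ (isomorphic to $\text{SL}_2(\mathbb{F}_3)$), whose trace set is easily seen to be $\{0, \pm 1, \pm 2\}$. By Lemma \ref{com4}, $[\overline{G}, \overline{G}] \cong A_4$, so the projection $\pi : G \to \overline{G}$ restricts to a surjection $[G,G] \twoheadrightarrow A_4$, and by Lemma \ref{cominj} its kernel is trivial or $\{\pm I\}$; in particular $\#[G,G] \in \{12, 24\}$.

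To establish that $\ell \neq 2$, I exploit the presence of a $4$-cycle in $S_4$. Lift an order-$4$ element of $\overline{G}$ to $g \in \text{GL}_2(\overline{k})$ with $g^4$ a scalar matrix, and rescale so that $g^4 = I$ and $g^2 \neq I$. If $\ell = 2$, then $(g-I)^4 = 0$, and Cayley--Hamilton applied to the $2 \times 2$ nilpotent matrix $g - I$ forces $(g-I)^2 = 0$, i.e.\ $g^2 = I$, a contradiction.

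To pin down $[G,G]$, I mimic the trick from Lemma \ref{trlem2}: select any Klein-four involution $\bar g \in A_4 = [\overline{G}, \overline{G}]$, lift it to some $g \in [G,G]$ (possible because $\pi|_{[G,G]}$ is surjective onto $A_4$), and observe that by Lemma \ref{mat} the element $g$ is traceless, so Cayley--Hamilton gives $g^2 = -I$. Hence $-I \in [G,G]$, forcing $\#[G,G] = 24$ and identifying $[G,G]$ with the full preimage $2T$ of $A_4$ in $\text{SL}_2(k)$.

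The trace image is then computed element by element inside $2T$. The central pair $\{\pm I\}$ contributes $\{\pm 2\}$. The six order-$4$ lifts of the three Klein-four involutions in $A_4$ are all traceless by Lemma \ref{mat}, contributing $0$. Finally, the sixteen lifts of the eight $3$-cycles of $A_4$ pair up as $(g, -g)$ with $g^3 = \pm I$; their characteristic polynomials are thus $x^2 \pm x + 1$, yielding traces $\mp 1$. Collectively these values form the claimed image $\{0, \pm 1, \pm 2\}$. The main subtlety will be the $3$-cycle bookkeeping and confirming that both trace values $\pm 1$ are actually attained, which is automatic since $g \leftrightarrow -g$ toggles between the two cases.
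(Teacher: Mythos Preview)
Your argument is correct, and it follows a genuinely different route from the paper. The paper invokes Faber's Proposition~4.13 to get $\ell\neq 2$ and then, using the explicit eight quaternionic matrices from the proof of Lemma~\ref{trlem2} together with one additional generator for the lift of $A_4$, reduces the claim to a direct matrix computation. You instead argue intrinsically: the $\ell\neq 2$ step is handled by an elementary Cayley--Hamilton obstruction to order-$4$ elements in $\text{PGL}_2$ over characteristic~$2$, and the trace computation is carried out abstractly by identifying $[G,G]$ with the binary tetrahedral group and reading off traces from the orders of the elements via $g^2=-I$ and $g^3=\pm I$. This is precisely the alternative the paper itself flags in the Remark following Lemma~\ref{trlem4} (noting $[G,G]\cong\text{SL}_2(\mathbb{F}_3)$), calling it ``less transparent''; in fact your write-up makes it quite transparent, and it has the advantage of being self-contained and basis-free, at the cost of not producing explicit matrices one could reuse elsewhere.
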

\begin{proof}
From Lemma \ref{com4} we deduce that $[ \overline{G}, \overline{G} ] \cong A_4$. By Proposition 4.13 in \cite{Faber} we have $\ell \neq 2$, and in combination with the observation that $-1 \in [G,G]$ from the proof of Lemma \ref{trlem2} above, it follows that in the right $K$-basis, the group $[G,G]$ is equal to the subgroup generated by the matrices from the proof of Lemma \ref{trlem2} above and the matrix
\[
\frac{1}{2}\begin{pmatrix} 1+i & -1-i \\ 1+i & 1-i \end{pmatrix}.
\]
The claim from the lemma now follows from a routine computation.
\end{proof}

\begin{lemma}\label{trlem4}
Let $G \subset \emph{GL}_2( k )$ be a subgroup with the property that $\overline{G} \cong A_5 \subset \emph{PGL}_2( k )$. Then if $\varphi = ( 1 + \sqrt{5} ) / 2$ denotes the golden ratio, the image of the trace map on $[G,G]$ equals 
\[
\{ 0, \pm 1, \pm 2, \pm \varphi, \pm ( \varphi - 1) \}.
\]
\end{lemma}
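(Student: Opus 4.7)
The plan is to mirror the proofs of Lemmas \ref{trlem2} and \ref{trlem3}: identify $[G,G]$ as an explicit double cover of $A_5$ inside $\text{SL}_2$, and then compute the trace on each of its conjugacy classes.

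By Lemma \ref{com4}, $A_5$ is perfect, so $[\overline{G}, \overline{G}] = \overline{G} \cong A_5$, and by Lemma \ref{cominj} the map $[G,G] \to A_5$ is surjective with kernel of size at most $2$ (of size $2$ only when $\ell \neq 2$). First I would show that $-I \in [G,G]$ whenever $\ell \neq 2$, by the same argument as in Lemma \ref{trlem2}: a lift $X \in [G,G] \subset \text{SL}_2(k)$ of a nontrivial projective involution satisfies $X^2 = \pm I$, and the case $X^2 = I$ would force distinct eigenvalues $\pm 1$ with $\det X = -1$, contradicting $X \in \text{SL}_2$. Hence $[G,G]$ is the binary icosahedral group $2.A_5$ of order $120$, whose explicit realisation inside $\text{SL}_2$ one can extract from Proposition 4.13 of \cite{Faber} exactly as in the proof of Lemma \ref{trlem3}.

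The second step is a direct conjugacy class enumeration. The five classes of $A_5$, of orders $1, 2, 3, 5, 5$, lift to nine classes in $2.A_5$ of orders $1, 2, 4, 3, 6, 5, 10, 5, 10$ respectively (the involutions squaring to $-I$ by the previous paragraph, the three- and five-cycles splitting according to whether their lift has order dividing $n$ or $2n$). For a non-scalar $X \in \text{SL}_2$ of order $n$ with eigenvalues $\zeta, \zeta^{-1}$ the trace is $\zeta + \zeta^{-1}$, so orders $1, 2, 3, 4, 6$ contribute traces $2, -2, -1, 0, 1$. The two order-$5$ classes have primitive eigenvalues $\zeta_5^{\pm 1}$ and $\zeta_5^{\pm 2}$, giving $2\cos(2\pi/5) = \varphi - 1$ and $2\cos(4\pi/5) = -\varphi$; the two order-$10$ classes give $2\cos(\pi/5) = \varphi$ and $2\cos(3\pi/5) = 1 - \varphi = -(\varphi-1)$. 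Assembling these nine values yields exactly $\{ 0, \pm 1, \pm 2, \pm \varphi, \pm(\varphi - 1) \}$.

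The main obstacle is that the eigenvalue analysis above is cleanest in characteristic zero, whereas the lemma applies in every characteristic $\ell$ for which $A_5$ embeds into $\text{PGL}_2( k )$. I would handle this by observing that Faber's realisation of $2.A_5 \subset \text{SL}_2$ is defined over $\mathbb{Z}[\varphi, \tfrac{1}{2}]$, so the characteristic-zero trace values reduce correctly modulo $\ell$, with $\varphi$ interpreted as a root of $x^2 - x - 1$ in $\overline{k}$. The genuine edge case is $\ell = 2$, where the kernel of $[G,G] \to A_5$ is trivial by Lemma \ref{cominj} and only orders $1, 3, 5$ appear; the corresponding traces $0 = 2, 1 = -1, \varphi - 1, -\varphi$ are still contained in the claimed set, so the stated equality continues to hold. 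Once this characteristic bookkeeping is settled, the image of the trace is exactly as asserted.
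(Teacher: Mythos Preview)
Your approach is correct but differs from the paper's. The paper cites Proposition~4.21 (not~4.13) of \cite{Faber} for explicit generators of $[G,G]$ in a suitable $K$-basis, and then declares ``the rest is now a computation''---a direct matrix enumeration. You instead identify $[G,G]$ abstractly as the binary icosahedral group $2.A_5 \cong \mathrm{SL}_2(\mathbb{F}_5)$ and read off the traces from its conjugacy classes via eigenvalues. This is precisely the alternative the paper flags in the Remark following the lemma: ``show by abstract means that $[G,G] \cong \mathrm{SL}_2(\mathbb{F}_5)$\ldots\ the traces of the irreducible faithful symplectic representations of these groups are known.'' Your route is more conceptual and explains \emph{why} each trace value appears; the paper's route is shorter on the page because it outsources everything to a calculation from given generators.

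Two small corrections. First, the Faber reference for the $A_5$ realisation is Proposition~4.21, not~4.13 (the latter handles the $A_4$ case). Second, in your characteristic~$2$ paragraph you say ``only orders $1,3,5$ appear'' in $[G,G] \cong A_5$, but $A_5$ certainly contains involutions; these are unipotent in $\mathrm{SL}_2(k)$ and have trace $0$, so the omission does not affect your conclusion, but the sentence as written is false. Your reduction-from-characteristic-zero argument is sound in spirit---the character values of the two faithful $2$-dimensional representations of $2.A_5$ lie in $\mathbb{Z}[\varphi]$ and reduce correctly modulo any $\ell$---though you may want to phrase it as a statement about the character rather than about an integral model of the representation itself.
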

\begin{proof}
By Theorem D.8 in \cite{Faber}, this case can only happen if $\# k \equiv 0, \pm 1 \bmod 5$, which ensures that $\varphi \in k$ if we write $\varphi = 3 \in \mathbb{F}_5$. From Lemma \ref{com4} we deduce that $[ \overline{G}, \overline{G} ] \cong A_5$. Proposition 4.21 in \cite{Faber} gives explicit generators for this subgroup in the right $K$-basis. In combination with the observation that $-1 \in [G,G]$ from before, it follows that generators for $[G, G]$ can be given by
\[
\pm \begin{pmatrix} \zeta_{10} & 0 \\ 0 & \zeta_{10}^{-1} \end{pmatrix} \quad \text{and} \quad \pm ( \zeta_{10} - \zeta_{10}^{-1} ) \cdot \begin{pmatrix} 1 & 2 - \zeta_{10} - \zeta_{10}^{-1} \\ 1 & -1 \end{pmatrix}.
\]
Again, the rest is now a computation. 
\end{proof}
\begin{opm}
An alternative, but less transparent, way to prove the Lemmas \ref{trlem2}, \ref{trlem3} and \ref{trlem4} above is to show by abstract means that $[G,G] \cong Q_8$, the quaternion group, $[G,G] \cong \text{SL}_2( \mathbb{F}_3 )$ and $[G,G] \cong \text{SL}_2( \mathbb{F}_5 )$ respectively. The traces of the irreducible faithful symplectic representations of these groups are known.
\end{opm}

\begin{proof}[Proof of Theorem \ref{main4}.]
Suppose first that $\overline{G}_{\lambda} \cong \text{PSL}_2( k )$ or $\text{PGL}_2( k )$ for some proper subfield $k \subsetneq \mathbb{F}_{\lambda}$ with $\# k > 3$. Then Lemma \ref{trlem1} shows that the traces that occur on $[G_{\lambda}, G_{\lambda}]$ are precisely the values in $k$, as well as the determinants for $k^{\times}$. If the determinant is not $k$-valued, then there exists some scalar $\alpha \cdot \text{id} \in G_{\lambda}$ with $\alpha \notin k$, and therefore there is a coset of $[G_{\lambda}, G_{\lambda}]$ for which the traces are given by $\alpha \cdot k$. Its intersection with $k$ is $\{ 0 \}$, so it follows that $f$ is semi-$\lambda$-abelian for all $f$-proper classes $x \in \mathbb{F}_{\lambda}^{\times}$. On the other hand, the class $0 \in \mathbb{F}_{\lambda}$ can never be semi-$\lambda$-abelian, because both $\text{PSL}_2( k )$ and its complement in $\text{PGL}_2( k )$ always contain traceless involutions, and therefore so will all their lifted cosets of $[G_{\lambda}, G_{\lambda}]$ in $G_{\lambda}$. Finally, if $\overline{G}_{\lambda} \cong \text{PSL}_2( \mathbb{F}_{\lambda} )$ or $\text{PGL}_2( \mathbb{F}_{\lambda} )$, then the trace is surjective on all cosets and therefore $f$ is not semi-$\lambda$-abelian for any class in $\mathbb{F}_{\lambda}$. As before, we will address the cases in which $\# k \leq 3$ momentarily.

Suppose now that $\overline{G}_{\lambda}$ is isomorphic to a dihedral group $D_n$ for some $n > 1$ coprime to $\ell$. We claim that $f$ is semi-$\lambda$-abelian for all $f$-proper classes $x \in \mathbb{F}_{\lambda}$ except for $x = 0$ if $\ell = 2$. Indeed, if $x \in \mathbb{F}_{\lambda}^{\times}$, then by Lemma \ref{com3}, there is a coset of $[G_{\lambda}, G_{\lambda}]$ that projectively consists of only traceless reflections, so in particular the value $x$ is not assumed. If $x = 0$, it suffices to find any coset of $[G_{\lambda}, G_{\lambda}]$ that does \emph{not} contain a traceless element. Suppose first that $\ell \neq 2$. If $n > 1$ is odd, then $[G_{\lambda}, G_{\lambda}]$ itself is such a coset, since its projective image is the unique cyclic subgroup of odd order $n$ of $D_n$ by Lemma \ref{com3}. It does not contain an involution, and so no traceless elements by Lemma \ref{mat}. If $n \equiv 2 \bmod 4$, the same argument as in the case that $n$ is odd will yield the right conclusion. If $n \equiv 0 \bmod 4$, then we consider any coset of $[G_{\lambda}, G_{\lambda}]$ that reduces to the non-trivial coset of $[D_n, D_n] = \langle r^2 \rangle$ represented by $r$. This will not contain any involutions, proving our claim. Finally, if $\ell = 2$, we need only consider the case that $n$ is odd. But since the identity is traceless, now both cosets will contain traceless elements, so $f$ is not semi-$\lambda$-abelian for $0$ in this case.

It remains to analyse the three exceptional images. Suppose first that $\overline{G}_{\lambda} \cong A_4$, in which case we may assume that $\ell \neq 2$ because $A_4 \subset \text{GL}_2( \mathbb{F}_4 )$ describes the Borel subgroup. By Lemma \ref{trlem2}, the subgroup $[G_{\lambda}, G_{\lambda}]$ contains precisely the traces $\{ 0, \pm 2 \}$. It follows that for any $f$-proper class $x \notin \{ 0, \pm 2 \}$, the eigenform $f$ must be semi-$\lambda$-abelian. In addition, it is also always semi-$\lambda$-abelian for the class $0 \in \mathbb{F}_{\lambda}$, because any coset of $[G_{\lambda}, G_{\lambda}]$ lifting a non-trivial coset of $[\overline{G}_{\lambda}, \overline{G}_{\lambda}]$ does not contain any traceless elements, since these elements are projectively all of order 3. Finally, by Lemma \ref{trlem3}, the cosets of $[\overline{G}_{\lambda}, \overline{G}_{\lambda}]$ lifting a non-trivial coset of $[A_4, A_4]$ in $A_4$ will have traces $\{ \pm \alpha \}$ for some $\alpha \in \mathbb{F}_{\lambda}^{\times}$. This shows that $f$ is also $\lambda$-abelian for the classes $\pm 2$ as soon as $\alpha \neq \pm 2$. But then squaring the matrix will still yield the desired conclusion unless $\ell = 3$, for which we would need a scalar $\alpha \cdot \text{id} \in G_{\lambda}$ with $\alpha \in \mathbb{F}_{\lambda}^{\times} \setminus \mathbb{F}_3^{\times}$. 

Now suppose that $\overline{G}_{\lambda} \cong S_4$, so that $[ \overline{G}_{\lambda}, \overline{G}_{\lambda} ] \cong A_4$. By Theorem D.7 in \cite{Faber}, this can only happen if $\ell \neq 2$. By Lemma \ref{trlem3}, the subgroup $[G_{\lambda}, G_{\lambda}]$ contains precisely the traces $\{ 0, \pm 1, \pm 2 \}$. It follows that for any $f$-proper class $x \notin \{ 0, \pm 1, \pm 2 \}$, the eigenform $f$ must be semi-$\lambda$-abelian. We next note that $f$ is never semi-$\lambda$-abelian for the class $0 \in \mathbb{F}_{\lambda}$, since both $A_4$ and $S_4 \setminus A_4$ contain traceless involutions. From Proposition 4.16 in \cite{Faber} it follows that any matrix representing the non-trivial coset is projectively given by a diagonal matrix with the entries $i$ and $1$. It is now quickly checked that the only non-zero traces on such a coset of $[G_{\lambda}, G_{\lambda}]$ are equal to $\pm \alpha (1+i)$ for some $\alpha \in \mathbb{F}_{\lambda^2}^{\times}$. For this set to equal $\{ \pm 1 \}$, we must have $\pm \alpha = (1-i) / 2$. Cubing this matrix also yields a coset with its only non-zero traces $\pm \alpha^3( 1 - i ) = \pm 1/2$. Now $\{ \pm 1 \} = \{ \pm 1/2 \}$ if and only if $\ell = 3$. Similarly, one finds that $\{ \pm \alpha (1+i) \}$ equals $\{ \pm 2 \}$ if and only if $\pm \alpha = 1-i$, and then $\pm \alpha^3( 1 - i ) = \pm 4$. Again, this coincides with $\pm 2$ if and only if $\ell = 3$, since $\ell \neq 2$. In the case that $\ell = 3$ and $\alpha$ is as above, then the determinant of the matrix representing the non-trivial coset equals $i \cdot (1-i)^2 = 2 = -1$, and therefore the determinant takes values in $\{ \pm 1 \}$.

Finally, suppose that $\overline{G}_{\lambda} \cong A_5$, so that also $[ \overline{G}_{\lambda}, \overline{G}_{\lambda} ] \cong A_5$. By Lemma \ref{trlem4}, the subgroup $[G_{\lambda}, G_{\lambda}]$ contains precisely the traces $\{ 0, \pm 1, \pm 2, \pm \varphi, \pm (\varphi -1 ) \}$. It follows that for other any $f$-proper class, the eigenform $f$ must be semi-$\lambda$-abelian for $x$. Since $[G_{\lambda}, G_{\lambda}]$ surjects onto $\overline{G}_{\lambda}$, all other cosets of $[G_{\lambda}, G_{\lambda}]$ are simply scaled versions of it. In particular, all cosets will contain traceless elements, and $f$ is never semi-$\lambda$-abelian for $0 \in \mathbb{F}_{\lambda}$. For the classes in $T \colonequals \{ \pm 1, \pm 2, \pm \varphi, \pm (\varphi -1 ) \}$, we should investigate for which finite fields $\mathbb{F}_{\lambda}$ there exists some $\alpha \in \mathbb{F}_{\lambda}^{\times} \setminus \{ \pm 1 \}$ such that the intersection of all subsets $\alpha^n T$ for $n \in \mathbb{Z}$ is non-empty. Since $\rho_f^{\lambda}$ is odd, we always have $\alpha \cdot \text{id} \in G_{\lambda}$ with $\alpha$ of multiplicative order 4, since only then $\text{det}( \alpha \cdot \text{id} ) = -1$ if $\ell \neq 2$. For such $\alpha$, it is easy to see that $T \cap \alpha T \neq \varnothing$ for only finitely many $\ell$. This reduces the problem to a finite computer search, the results of which are summarised in the statement of Theorem \ref{main4}.

We will address the values of $c_{\lambda}$ in the next subsection.
\end{proof}

We remark here that the proof above yields an alternative, but less transparent, proof of Theorem \ref{weaklab}.

\subsection{Analysing the congruence conditions}

To prove Proposition \ref{mod}, we will need a bound for the conductor of an abelian extension of $\mathbb{Q}$ with fixed degree and controlled ramification. This is established in the following two results.
\begin{prop}\label{cond1}
Let $S$ be a finite set of rational primes and let $q = p^r$ be a prime power. Let $L / \mathbb{Q}$ be a Galois extension with $\emph{Gal}(L / \mathbb{Q}) \cong C_q$ that is unramified outside $S$. Let $s$ denote the product of all primes in $S$. Then $L \subset \mathbb{Q}( \zeta_n )$ where $n = s$ if $p \notin S$ and $n = 2qs$ otherwise.
\end{prop}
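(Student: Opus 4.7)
The plan is to apply the Kronecker--Weber theorem and then bound the conductor $\mathfrak{f} = \mathfrak{f}(L/\mathbb{Q})$ through a local analysis at each prime in $S$. By Kronecker--Weber, $L \subset \mathbb{Q}(\zeta_{\mathfrak{f}})$, and the primes dividing $\mathfrak{f}$ are exactly those that ramify in $L$, which by hypothesis lie in $S$. So writing $\mathfrak{f} = \prod_{\ell \in S} \ell^{a_\ell}$, it suffices to bound each local exponent $a_\ell$ and show that $\mathfrak{f} \mid n$; the result then follows from $L \subset \mathbb{Q}(\zeta_{\mathfrak{f}}) \subset \mathbb{Q}(\zeta_n)$.

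For a prime $\ell \in S$ with $\ell \neq p$, I would observe that the completion of $L$ at $\ell$ has Galois group a subgroup of $C_q$, hence cyclic of $p$-power order. Its wild inertia, being a pro-$\ell$ subgroup of a $p$-group, must be trivial, so the local extension is tamely ramified and $a_\ell \leq 1$ by the standard bound on the conductor of a tame abelian local extension. For $\ell = p$ in the case $p \in S$, I would invoke local Kronecker--Weber to embed the completion into $\mathbb{Q}_p(\zeta_{p^k})$ for the smallest $k$ such that $(\mathbb{Z}/p^k\mathbb{Z})^\times$ admits a cyclic quotient of order $q = p^r$. For $p$ odd, $(\mathbb{Z}/p^k\mathbb{Z})^\times$ is cyclic of order $p^{k-1}(p-1)$, so $k = r+1$ suffices and $p^{a_p} \mid pq$. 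For $p = 2$, the structure $(\mathbb{Z}/2^k\mathbb{Z})^\times \cong C_2 \times C_{2^{k-2}}$ for $k \geq 3$ forces $k = r+2$ (with $r = 0$ trivial), giving $2^{a_2} \mid 4q$.

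Assembling these local bounds by multiplicativity: if $p \notin S$ then $\mathfrak{f} \mid \prod_{\ell \in S}\ell = s$; if $p$ is odd and $p \in S$ then $\mathfrak{f} \mid pq \cdot (s/p) = qs \mid 2qs$; if $p = 2 \in S$ then $\mathfrak{f} \mid 4q \cdot (s/2) = 2qs$. In all cases $\mathfrak{f}$ divides the asserted $n$, which concludes the proof. The main obstacle I anticipate is the separate treatment needed for $p = 2$: because $(\mathbb{Z}/2^k\mathbb{Z})^\times$ fails to be cyclic, its largest cyclic $2$-power quotient is only $C_{2^{k-2}}$ rather than $C_{2^{k-1}}$, and this is precisely what forces the extra factor of $2$ in the uniform bound. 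For $p$ odd, the sharper bound $qs$ would already suffice, but the statement opts for the uniform formulation $n = 2qs$ to avoid splitting into subcases.
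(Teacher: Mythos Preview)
Your proposal is correct and follows essentially the same approach as the paper's proof: a local analysis at each prime in $S$ showing tame ramification (hence exponent $\leq 1$) at primes $\ell \neq p$, and exponent $r+1$ (resp.\ $r+2$) at $\ell = p$ for $p$ odd (resp.\ $p=2$). The only cosmetic difference is that you phrase the argument in terms of bounding the global conductor $\mathfrak{f}$ and invoking $L \subset \mathbb{Q}(\zeta_{\mathfrak{f}})$, whereas the paper appeals to the local-to-global step in the proof of Kronecker--Weber to assemble local cyclotomic embeddings $L_{\mathfrak l} \subset \mathbb{Q}_\ell(\zeta_{n_\ell})$ into a global one; the underlying computation of the exponents is identical.
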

\begin{proof}
The local proof of the Kronecker-Weber theorem shows that, if we can find exponents $n_{\ell} = \ell^{k_{\ell}} m_{\ell} \in \mathbb{N}$ with $\ell \nmid m_{\ell}$ such that the completion $L_{\mathfrak{l}}$ at any prime $\mathfrak{l}$ of $L$ above $\ell$ is contained in the field $\mathbb{Q}_{\ell}( \zeta_{n_{\ell}} )$, then globally, $L \subset \mathbb{Q}( \zeta_n )$ for $n = \prod_{\ell \in S} \ell^{k_{\ell}}$. If $\ell \neq p$, then $L$ is tamely ramified at $\ell$ and therefore, locally one may choose $k_\ell = 1$. If $\ell = p$, then one may always choose $k_p = r+1$ if $p$ is odd, and $k_p = r + 2$ if $p = 2$, see Chapter 6 in Stevenhagen's lecture notes \cite{stevenhagen}.
\end{proof}

Recall that for a finite set of rational primes $S$ and a positive integer $n$, we defined $\text{gcd}(n,S)$ as the part of $n$ supported at the primes occurring in $S$.

\begin{gevolg}\label{cond2}
Let $S$ be a finite set of rational primes and let $L / \mathbb{Q}$ be a finite abelian extension that is unramified outside $S$. Let $s$ denote the product of all primes in $S$. Then $L \subset \mathbb{Q}( \zeta_n )$ where $$n = s \cdot \emph{gcd}(2 \emph{ exp}( \emph{Gal}( L / \mathbb{Q} ) ), S).$$
\end{gevolg}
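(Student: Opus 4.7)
The plan is to reduce the claim to the cyclic prime-power case settled by Proposition \ref{cond1}, and then to combine the resulting cyclotomic inclusions by a prime-by-prime valuation comparison.

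First, I would appeal to the structure theorem for finite abelian groups to decompose $\text{Gal}(L/\mathbb{Q}) \cong \prod_{i=1}^{k} C_{q_i}$ with each $q_i = p_i^{r_i}$ a prime power (the primes $p_i$ need not be distinct). Letting $H_i$ denote the kernel of the projection onto the $i$th factor $C_{q_i}$, the intermediate field $L_i \colonequals L^{H_i}$ is cyclic of order $q_i$ over $\mathbb{Q}$, still unramified outside $S$, and since $\bigcap_i H_i = \{1\}$ the field $L$ is the compositum of the $L_i$. Applying Proposition \ref{cond1} to each factor gives $L_i \subset \mathbb{Q}(\zeta_{n_i})$ with $n_i = s$ when $p_i \notin S$ and $n_i = 2q_is$ when $p_i \in S$, so that $L \subset \mathbb{Q}(\zeta_N)$ for $N \colonequals \text{lcm}_i(n_i)$. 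Writing $M \colonequals s \cdot \text{gcd}(2\,\text{exp}(\text{Gal}(L/\mathbb{Q})), S)$, the corollary reduces to the field inclusion $\mathbb{Q}(\zeta_N) \subseteq \mathbb{Q}(\zeta_M)$.

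I would verify this inclusion by comparing $\ell$-adic valuations. For $\ell \in S$, the dominant contribution to $v_\ell(N) = \max_i v_\ell(n_i)$ comes from indices $i$ with $p_i \in S$ and $p_i = \ell$ (when such $i$ exist), yielding $v_\ell(2) + r_i + 1$; taking the maximum over such $i$ and using $v_\ell(\text{exp}(\text{Gal}(L/\mathbb{Q}))) = \max_{i:\,p_i=\ell} r_i$ shows that $v_\ell(N) = 1 + v_\ell(2\,\text{exp}(\text{Gal}(L/\mathbb{Q}))) = v_\ell(M)$, and the subcases where no such $i$ exists are handled similarly. For $\ell \notin S$ with $\ell$ odd, every $n_i$ is coprime to $\ell$, so $v_\ell(N) = 0 = v_\ell(M)$.

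The main obstacle, and the only place the argument is not fully mechanical, is the prime $\ell = 2$ when $2 \notin S$: here $v_2(M) = 0$, but the gratuitous factor of $2$ in $n_i = 2q_is$ can force $v_2(N) = 1$. The remedy is the observation that in this situation every other prime divisor of $N$ lies in $S$ and is therefore odd, so $N = 2N'$ with $N'$ odd and $N' \mid M$; since $\mathbb{Q}(\zeta_{2N'}) = \mathbb{Q}(\zeta_{N'})$ for odd $N'$, the desired inclusion $\mathbb{Q}(\zeta_N) \subseteq \mathbb{Q}(\zeta_M)$ still holds, and this completes the argument.
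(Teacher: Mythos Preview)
Your proof is correct and follows essentially the same approach as the paper: decompose the abelian Galois group into cyclic prime-power pieces, apply Proposition \ref{cond1} to each factor, and take the least common multiple. Your treatment is in fact more careful than the paper's terse argument, as you explicitly handle the edge case $2 \notin S$ (where the spurious factor of $2$ in $n_i = 2q_is$ could push $v_2(N)$ above $v_2(M)$) via the observation $\mathbb{Q}(\zeta_{2N'}) = \mathbb{Q}(\zeta_{N'})$ for odd $N'$, a point the paper glosses over.
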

\begin{proof}
A finite abelian group can be written as the product of subgroups of prime power order, and therefore $L$ can be seen as the compositum of subfields of prime power order. Then $L$ is contained in $\mathbb{Q}( \zeta_n )$ for $n$ the least common multiple of the numbers from Proposition \ref{cond1}. If multiple cyclic subgroups share the same base prime, only the largest will contribute; hence the appearance of the exponent.
\end{proof}

\begin{proof}[Proof of Proposition \ref{mod}]
In the Borel case, we apply Corollary \ref{cond2} to Theorem \ref{main1}, with $S$ the set of rational primes dividing $N \ell$, the product of which is equal to $\text{rad}( N\ell)$, and the field $L$ whose Galois group over $\mathbb{Q}$ can be identified with $\text{im}( \rho_f^{\text{ss}, \lambda} )$ over the rationals. Since the image of Frobenius in $L$ determines the trace, the result now follows directly from Corollary \ref{cond2} above. 

In the dihedral cases $D_n$ with $n \geq 3$, the proofs of Theorems \ref{main2} and \ref{main3} show that the coset(s) of traceless matrices are given by the complement of the cyclic subgroup of rotations inside $D_n$ of index 2. The relevant extension corresponding to this cyclic subgroup is then quadratic. The bound on the divisor $M$ now follows from Corollary \ref{cond2} after observing that $\text{gcd}( 4, S ) = \text{gcd}(2, N\ell )^2$. It is well known that the splitting behaviour of primes in quadratic extensions is governed by the Legendre symbol for the discriminant. If $N \ell$ is odd, then $2$ will be unramified, forcing the discriminant of the quadratic field to be $1 \bmod 4$.

Similarly, in the case of $D_2 \cong C_2 \times C_2$, the relevant field will in fact be biquadratic and for $\ell \neq 2$, all three non-trivial classes will be traceless. Frobenius at $p$ belongs to these as soon as $p$ is not completely split.
\end{proof}

\begin{proof}[Proof of Proposition \ref{goodmod}]
Our assumption that $f \in \mathcal{S}_{\bfk}( \Gamma_0(N) )$ means that $\chi = \mathbbm{1}$ is the trivial character, and therefore by Deligne's theorem, $\text{det}( \rho_{f, \lambda}( \text{Frob}_{\mathfrak{p}} ) ) = p^{\bfk-1}$ for all primes $p \nmid N \ell$. By Chebotarev's density theorem, every (conjugacy) class in $G_{\lambda}$ is represented by $\text{Frob}_{\mathfrak{p}}$ for some prime $\mathfrak{p} \subset \overline{\mathbb{Z}}$. The group $G_{\lambda}$ in $\text{GL}_2( \mathbb{F}_{\lambda} )$ therefore comes with a natural $\mathbb{F}_{\ell}$\emph{-valued} determinant map $\text{det} : G_{\lambda} \to \mathbb{F}_{\ell}^{\times}$. 

If $\alpha \cdot \text{id} \in G_{\lambda} \subset \text{GL}_2( \mathbb{F}_{\lambda} )$ is a scalar matrix for some $\alpha \in \mathbb{F}_{\lambda}^{\times}$, then its determinant $\alpha^2$ must be a member of $\mathbb{F}_{\ell}^{\times}$. Therefore $[ \mathbb{F}_{\ell}( \alpha ) : \mathbb{F}_{\ell} ]$ divides both 2 and the odd number $[ \mathbb{F}_{\lambda} : \mathbb{F}_{\ell} ]$; whence it must be 1 and we find that $\alpha \in \mathbb{F}_{\ell}^{\times}$. In other words, the determinant of any scalar matrix in $G_{\lambda}$ must be a square in $\mathbb{F}_{\ell}^{\times}$. It follows that the determinant map descends to a map $\overline{\text{det}} : \overline{G}_{\lambda} \to \mathbb{F}_{\ell}^{\times} / ( \mathbb{F}_{\ell}^{\times} )^2$.  

If $p$ is not a square modulo $\ell$, then as $\bfk$ is assumed even, neither is $p^{\bfk-1}$. Therefore the image $\text{det}(G_{\lambda})$ contains both square and non-square values in $\mathbb{F}_{\ell}$. The map $\overline{\text{det}} : \overline{G}_{\lambda} \to \mathbb{F}_{\ell}^{\times} / ( \mathbb{F}_{\ell}^{\times} )^2$ is therefore surjective, and so there is an index 2 subgroup of $\overline{G}_{\lambda}$ given by its kernel, to which $\text{Frob}_{\mathfrak{p}}$ belongs if and only if its determinant $p^{\bfk-1}$, or equivalently, $p$ itself, is a square modulo $\ell$.

On the other hand, if $\overline{G}_{\lambda} \cong D_n$ for some odd $n$, then from the fact that $[D_n : [D_n, D_n]] = 2$ by Lemma \ref{com3}, one immediately deduces that $[\overline{G}_{\lambda}, \overline{G}_{\lambda}]$ is the only index 2 subgroup of $\overline{G}_{\lambda}$, and therefore it must coincide with the index 2 subgroup found above. We have seen that the traces of precisely those elements in the non-trivial coset of this commutator subgroup vanish mod $\lambda$. This therefore happens if and only if $p$ is not a square modulo $\ell$, completing the proof of the proposition.
\end{proof}

Recall that for a normalised cuspidal eigenform $f \in \mathcal{S}_{\bfk}( N, \chi )$, we defined $c_{\lambda} \in [0,1]$ as the density of primes $p$ for which $a_p(f) \equiv 0 \bmod \lambda$. We note that this density exists as a consequence of Chebotarev's density theorem, since this condition is governed by behaviour of Frobenius in the number field $L_{\lambda}$ defined at the start of this section. To complete the proof of Theorem \ref{main4}, it remains to compute the values $c_{\lambda}$ in all the cases of Dickson's Theorem \ref{dickson}. We will need the following lemma; recall that $\#k = q$.
\begin{lemma}\label{tr0count}
The number of traceless elements in $\emph{PGL}_2( k )$ is equal to $q^2$. If $\ell \neq 2$, the number of traceless elements in $\emph{PSL}_2( k )$ is equal to $q(q+1)/2$ if $q \equiv 1 \bmod 4$ and $q(q-1)/2$ if $q \equiv -1 \bmod 4$.
\end{lemma}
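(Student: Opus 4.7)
The plan is to count traceless matrices directly in $\text{GL}_2(k)$ and in $\text{SL}_2(k)$, and then descend to the respective projective quotients. As noted just before the lemma, the subset of traceless matrices in $\text{GL}_2(k)$ is stable under the left action of $k^\times$ (since $\text{tr}(\alpha X) = \alpha\,\text{tr}(X)$), so ``tracelessness'' is well-defined in $\text{PGL}_2(k)$ and $\text{PSL}_2(k)$, and each $k^\times$-orbit (resp.\ $\{\pm I\}$-orbit) of a traceless matrix is entirely traceless.

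For the $\text{PGL}_2(k)$ count, I parametrise a general traceless matrix in $\text{GL}_2(k)$ as $X = \begin{pmatrix} a & b \\ c & -a \end{pmatrix}$ subject to the invertibility constraint $a^2 + bc \neq 0$. Fixing $a$ and counting admissible pairs $(b,c) \in k^2$ yields $(q-1)^2$ pairs when $a = 0$ (both $b$ and $c$ must be nonzero) and $q + (q-1)^2 = q^2 - q + 1$ pairs when $a \neq 0$ (split on $c = 0$ vs.\ $c \neq 0$). Summing over $a$ gives $(q-1)^2 + (q-1)(q^2 - q + 1) = q^2(q-1)$ traceless elements in $\text{GL}_2(k)$. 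Since $k^\times$ acts freely on nonzero matrices, dividing by $q-1$ yields exactly $q^2$ traceless elements in $\text{PGL}_2(k)$.

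For $\text{PSL}_2(k)$ with $\ell \neq 2$, I count traceless matrices in $\text{SL}_2(k)$ directly: they are parametrised by $(a,b,c) \in k^3$ with $a^2 + bc = -1$, and then I divide by $|\{\pm I\}| = 2$ (the action is free since $X \neq -X$ in odd characteristic for $X \in \text{SL}_2(k)$). Fixing $a$, the equation $bc = -1 - a^2$ has $q - 1$ solutions when $-1 - a^2 \neq 0$ and $2q - 1$ solutions when $-1 - a^2 = 0$. The count of $a \in k$ with $a^2 = -1$ is $2$ if $q \equiv 1 \bmod 4$ and $0$ otherwise; this is the only place where the residue class of $q \bmod 4$ enters. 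In the first case the total is $(q-2)(q-1) + 2(2q-1) = q(q+1)$, and in the second it is $q(q-1)$, yielding $q(q+1)/2$ and $q(q-1)/2$ traceless elements of $\text{PSL}_2(k)$ respectively after dividing by two.

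The computations are entirely elementary; the only mild obstacle is keeping the case splits straight, namely the $a = 0$ vs.\ $a \neq 0$ and $c = 0$ vs.\ $c \neq 0$ splits in the $\text{GL}_2$ count, and the $q \bmod 4$ split in the $\text{SL}_2$ count. As a sanity check, one may alternatively derive the $\text{PGL}_2(k)$ count by noting that traceless elements are precisely the involutions (by Lemma~\ref{mat}), which correspond bijectively to unordered pairs of fixed points in $\mathbb{P}^1(\overline{k})$ stable under $\text{Gal}(\overline{k}/k)$, giving $\binom{q+1}{2} + \binom{q^2-q}{2}/\text{(orbit size 2)} = q(q+1)/2 + q(q-1)/2 = q^2$.
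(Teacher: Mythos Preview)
Your proof is correct and takes essentially the same approach as the paper---a direct elementary count of traceless matrices---though you lift to $\text{GL}_2(k)$ and $\text{SL}_2(k)$ and divide out the scalar kernel, whereas the paper chooses projective representatives in $\text{PGL}_2(k)$ (normalising to diagonal $(0,0)$ or $(1,-1)$) and, for $\text{PSL}_2(k)$, checks which of these have square determinant. Your closing sanity check via involutions and their fixed-point pairs on $\mathbb{P}^1$ is a pleasant addition not present in the paper.
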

\begin{proof}
There are two types of traceless elements in $\text{PGL}_2( k )$, of which there are $q-1$ elements with two zeroes on the diagonal. All other elements can be uniquely normalised to have the diagonal elements $1$ and $-1$. These matrices are in bijection with $x, y \in k$ with $-(1+xy) \neq 0$. Conditioning on $x$ being zero or not, we find $q + (q-1)^2 = q^2 - q + 1$ such elements, proving the first claim. A similar argument in $\text{PSL}_2( k )$ yields $(q-1)/2$ elements of the first kind, and of the second kind no elements with $x = 0$ if $-1$ is not a square, and $q$ otherwise. Combining these observations completes the proof.
\end{proof}

\begin{proof}[Completing the proof of Theorem \ref{main4}.]
We once again appeal to Dickson's Theorem \ref{dickson} to reduce to a case distinction, recalling that the map $\text{GL}_2( \mathbb{F}_{\lambda} ) \to \text{PGL}_2( \mathbb{F}_{\lambda} )$ preserves the subset of traceless matrices.

If $\overline{G}_{\lambda}$ can be conjugated into the Borel subgroup in $\text{PGL}_2( \mathbb{F}_{\lambda^2} )$, then the diagonal entries are given by two characters $\psi_1, \psi_2 : G_{\lambda} \to \mathbb{F}_{\lambda^2}^{\times}$. The subset of traceless elements of this group are then those elements in the preimage of $-1$ of the character $\psi_1 / \psi_2$. If $-1$ does not appear in its image, then $c_{\lambda} = 0$. If it does, then the size of its preimage is equal to the multiplicative order of the character. If the characters are $\mathbb{F}_{\lambda}^{\times}$-valued, then this order must divide $q-1$, where $q = \# \mathbb{F}_{\lambda}$. 

If not, then because their sum and product are $\mathbb{F}_{\lambda}$-valued, the charaters must be Galois conjugates. Recall that $\mathbb{F}_{\lambda^2}^{\times}$ is a cyclic group of order $q^2 - 1$, within which the group $\mathbb{F}_{\lambda}^{\times}$ is contained as the unique cyclic subgroup of order $q - 1$. Therefore, raising to the power $q + 1$ maps $\mathbb{F}_{\lambda^2}$ into $\mathbb{F}_{\lambda}$. It follows that the order of $\psi_1 / \psi_2$ must divide $q+1$. If $\ell \neq 2$, then because this character assumes the value $-1$, its order must be even.

If the image of $\overline{\rho}_f^{\lambda}$ is isomorphic to $\text{PGL}_2( k )$ for some subfield $k \subset \mathbb{F}_{\lambda}$ with $\# k = q$, then recalling the well-known fact that $\# \text{PGL}_2( k ) = q(q-1)(q+1)$, the claim is immediate from Chebotarev's density theorem in combination with Lemma \ref{tr0count} above. The proof in case of $\text{PSL}_2( k )$ is very similar.

For the remaining cases, we appeal to Lemma \ref{mat} to reduce to counting the number of involutions. If the image of $\overline{\rho}_f^{\lambda}$ is isomorphic to a dihedral group $D_n$ with $n$ odd, then all involutions are contained in the non-trivial coset of rotations inside $D_n$. If $n$ is even, then this subgroup contains one additional involution. The result now follows by recalling that for $\ell = 2$, also the trivial element is traceless.

For the three exceptional cases, the proof follows from counting the number of involutions in $A_4$, $S_4$ and $A_5$ respectively. We leave those elementary details to the reader.
\end{proof}

\section{Examples}\label{exam}

We illustrate our results in the setting of Galois representations attached to classical normalised cuspidal eigenforms of weight 2 with trivial nebentypus, because these representations are most accessible due to their ostensible connections to the theory of elliptic curves and the LMFDB \cite{LMFDB}.

\begin{opm}\label{CM}
If an elliptic curve $E / \mathbb{Q}$ has complex multiplication by the maximal order inside an imaginary quadratic number field of discriminant $D < 0$, then $E$ is supersingular at some prime $p$ if and only if $p$ is inert in the extension $\mathbb{Q}( \sqrt{D} ) / \mathbb{Q}$. If $p > 3$, this is equivalent to the condition that $a_p(E) = 0$. 

It is well known, see for example Proposition 1.14 in \cite{zywina}, that if $j(E) \neq 0$, the mod $\ell$ image of $\rho_E^{\ell}$ for odd $\ell \nmid D$ is equal to the normaliser of a split Cartan or a non-split Cartan subgroup, again depending on the splitting behaviour of $\ell$ in $\mathbb{Q}(\sqrt{D}) / \mathbb{Q}$. These Cartan subgroups are abstractly isomorphic to $\mathbb{F}_{\ell}^{\times} \times \mathbb{F}_{\ell}^{\times}$ and $\mathbb{F}_{\ell^2}^{\times}$ respectively, and therefore their projective images are cyclic of order $\ell - 1$ and $\ell + 1$ respectively. Since the Cartan subgroups are of index 2 in their normalisers, the image of $\overline{\rho}_E^{\ell}$ is isomorphic to $D_{\ell - 1}$ or $D_{\ell + 1}$ respectively. Therefore, unless $\ell = 3$ in the split Cartan case, the newform associated with $E$ through modularity is not $\ell$-abelian by Theorem \ref{main3}.

If $\ell \mid D$, then Proposition 1.14 in \cite{zywina} shows that the mod $\ell$ image of $\rho_E^{\ell}$ is contained in the Borel subgroup, and so in this case, the relevant newform is totally $\ell$-abelian.
\end{opm}

Remark \ref{CM} explains that the mod $\ell$ images of rational elliptic curves with complex multiplication are well understood, but not a rich source of $\ell$-abelian modular forms that are not also totally $\ell$-abelian. The work of Zywina \cite{zywina} classifies conjecturally, but often with proof, all the possible rational non-CM elliptic curves whose Galois representations modulo certain primes are not surjective. From these results the following can be gleaned for non-CM elliptic curves $E / \mathbb{Q}$:

\begin{itemize}
\item If the image of $\rho_E^{\ell}$ is contained in the Borel subgroup, then $\ell \in \{ 2, 3, 5, 7, 11, 13, 17, 37 \}$.
\item If $\text{im}( \overline{\rho}_E^{\ell} ) \cong D_2$, then $\ell \in \{ 3, 5 \}$.
\item If $\text{im}( \overline{\rho}_E^{\ell} ) \cong D_n$ for some odd $n \geq 3$ and $\ell$ is odd, then $n = 3$ and $\ell = 7$.
\item If $\text{im}( \overline{\rho}_E^{\ell} ) \cong D_n$ for some even $n \geq 4$, then $(n, \ell) \in \{ (4, 3),  (4,5), (6,5), (6, 7), (8, 7), (12, 11) \}$.
\item If image of $\overline{\rho}_E^{\ell}$ is isomorphic to $S_4$, then $\ell \in \{ 5, 13 \}$.
\end{itemize}  

We will discuss examples of all these cases below. 

\begin{opm}\label{mod2opm}
Because $\text{GL}_2( \mathbb{F}_{2} ) \cong S_3$, by Theorem \ref{main2}, even in the case of full mod 2 image, the rational newform $f$ will be weakly $2$-abelian for the class $0 \bmod 2$. In this setting of elliptic curves, we can also give a more direct proof. If $E / \mathbb{Q}$ is an elliptic curve with minimal discriminant $\Delta$ and conductor $N$, then for $p > 2$ of good reduction, the definition $a_p(E) = p + 1 - \# E( \mathbb{F}_p )$ shows that $a_p(E) \equiv 0 \bmod 2$ if and only if $\# E( \mathbb{F}_p ) \equiv 0 \bmod 2$. This is equivalent to the finite group $E( \mathbb{F}_p )$ admitting a 2-torsion point. If a minimal model for $E$ is given by $y^2 = g(x) = x^3 + ax + b$ for some $a,b \in \mathbb{Z}$, then the $x$-coordinates of the 2-torsion points of $E$ are given by the roots of $g$. The discriminant of $g$ equals $\Delta$ up to squares, and the splitting field of $g$ will contain the field $\mathbb{Q}( \sqrt{\Delta} )$. If $\Delta$ is not a square modulo $p$, this yields a quadratic subfield of the splitting field of $g$ over $\mathbb{F}_p$. Therefore this splitting field needs to be quadratic as well, and $g$ must have a root in $\mathbb{F}_p$, yielding a 2-torsion point in $E( \mathbb{F}_p )$. In other words, we have shown that
\[
\left( \frac{\Delta}{p} \right) = -1 \implies a_p(E) \equiv 0 \bmod 2.
\]
\end{opm}

\begin{vb}\label{338exam}
We consider the rational newform of weight 2 and level $338 = 2 \cdot 13^2$ associated with the isogeny class of elliptic curves with Cremona label 338d. Its $\textbf{q}$-expansion starts with
\[
f = \textbf{q} -  \textbf{q}^{2} -  \textbf{q}^{3} +  \textbf{q}^{4} + 3 \textbf{q}^{5} +  \textbf{q}^{6} + 3 \textbf{q}^{7} -  \textbf{q}^{8} - 2 \textbf{q}^{9} - 3 \textbf{q}^{10} -  \textbf{q}^{12} - 3 \textbf{q}^{14} - 3 \textbf{q}^{15} +  \textbf{q}^{16} + \ldots
\]
We will study the behaviour of its Fourier coefficients modulo the three smallest primes $\ell \in \{ 2, 3, 5 \}$. First, we remark that the mod 2 image is full. We compute that $\mathbb{Q}( \sqrt{ \Delta } ) = \mathbb{Q}( \sqrt{-26} )$, and so by Remark \ref{mod2opm}, 
\[
\left( \frac{-26}{p} \right) = -1 \implies a_p(E) \equiv 0 \bmod 2.
\]
Next, the mod 3 images of the Galois representations on the 3-adic Tate modules of the elliptic curves are isomorphic to $D_4 \subset \text{GL}_2( \mathbb{F}_3 )$, with projective image isomorphic to $D_2 \cong C_2 \times C_2$. From Theorem \ref{main3} and Proposition \ref{mod}, we expect the primes $p$ for which $a_p(f) \equiv 0 \bmod 3$ to be given by a congruence condition on $p$ with the modulus dividing $\text{rad}( 338 \cdot 3 ) \cdot \text{gcd}(2, 338 )^2 = 2^3 \cdot 3 \cdot 13$. The extension governing the vanishing of $a_p(f) \bmod 3$ is the field $\overline{Z}_3 = \mathbb{Q}( \sqrt{-3}, \sqrt{13} ) \subset \mathbb{Q}( \zeta_{39} )$, as this is the unique biquadratic subfield of the 3-division fields of the relevant elliptic curves. The vanishing of $a_p(f) \bmod 3$ is thus determined by the value of $p \bmod 39$. Explicitly,
\[
a_p(f) \equiv 0 \bmod 3 \iff \left( \frac{-3}{p} \right) = -1 \quad \text{or} \quad \left( \frac{13}{p} \right) = -1.
\]
The mod 5 image of any of these elliptic curves is the full Borel subgroup of $\text{GL}_2( \mathbb{F}_5 )$ of cardinality 80. Even though this group is nonabelian, the image of its semisimplification is the maximal diagonal subgroup of $\text{GL}_2( \mathbb{F}_5 )$, which is an abelian group isomorphic to $C_4 \times C_4$. Theorem \ref{main1} predicts the value of $a_p(f) \bmod 5$ to be given by a congruence condition on $p$, and according to Proposition \ref{mod} with the modulus being a divisor of $\text{rad}( 338 \cdot 5 ) \cdot \text{gcd}(8, \{ 2, 5, 13 \} ) = 2^4 \cdot 5 \cdot 13$. To start, one may observe that for $p \neq 2, 5, 13$,
\[
a_p(f) \bmod 5 \equiv
\begin{cases}
0 & \implies p \equiv 1,4 \bmod 5 \\
1,4 &\implies p \equiv 3, 4 \bmod 5; \\
2,3 &\implies p \equiv 1, 2 \bmod 5. \\
\end{cases}
\]
This can be explained from the fact that the determinant of the image of $\text{Frob}_{\mathfrak{p}}$ is equal to $p \bmod 5$. Namely, if $\rho_f^{\text{ss}, 5}( \text{Frob}_{\mathfrak{p}} )$ is the diagonal matrix with entries $\alpha, \beta \in \mathbb{F}_5$, then
\[
\alpha \beta \equiv p \bmod 5 \quad \text{and} \quad \alpha + \beta \equiv a_p \bmod 5 \implies p + \beta^2 \equiv a_p \beta \bmod 5. 
\]
Therefore $( 2\beta - a_p)^2 \equiv p + a_p^2 \bmod 5$. The observations now coincide with the simple criterion of $p + a_p^2$ being a square modulo 5. One may also analyse the situation modulo 65, and find that
\[
a_p(f) \bmod 5 \equiv
\begin{cases}
0 & \iff p \equiv 4, 6, 9, 11, 14, 21, 29, 31, 41, 46, 49, 64 \bmod 65; \\
1 &\iff p \equiv 8, 19, 23, 33, 38, 43, 44, 54, 63 \bmod 65; \\
2 &\iff p \equiv 1, 2, 12, 16, 17, 32, 57, 61, 62 \bmod 65; \\
3 &\iff p \equiv 7, 22, 27, 36, 37, 42, 47, 51, 56 \bmod 65; \\
4 &\iff p \equiv 3, 18, 24, 28, 34, 48, 53, 58, 59 \bmod 65. \\
\end{cases}
\]
These congruences are a shadow of the splitting behaviour of a rational prime in the abelian extension $Z_5$ of degree 16 cut out by the image of  $\rho_f^{\text{ss}, 5}$. All elliptic curves in the class attain a 5-torsion point over a $C_4$-field that is unramified away from $5$ and $13$ and thus contained in $\mathbb{Q}( \zeta_{65} )$. The top left entry of $\rho_f^{\lambda}( \text{Frob}_{\mathfrak{p}} )$ is now fully determined by $p \bmod 65$, and since the determinant equals $p \bmod 5$, the trace is also determined by $p \bmod 65$. We now identify $Z_5$ as the unique subfield of degree 16 contained in the degree 48 field $\mathbb{Q}( \zeta_{65} )$. The sizes of the congruence classes are explained by the observation that the maximal diagonal subgroup of $\text{GL}_2( \mathbb{F}_5 )$ contains 4 traceless matrices, and for each non-zero value of the trace only 3 elements.
\end{vb}

\begin{vb}\label{s3exam1}
We consider the rational newform of weight 2 and level $2450 = 2 \cdot 5^2 \cdot 7^2$ associated with the elliptic curve with Cremona label 2450ba1. Its $\textbf{q}$-expansion starts with
\[
f = \textbf{q} +  \textbf{q}^{2} +  \textbf{q}^{4} +  \textbf{q}^{8} - 3\textbf{q}^{9} - 2\textbf{q}^{11} +  \textbf{q}^{16} - 7 \textbf{q}^{17} - 3 \textbf{q}^{18} + \ldots.
\]
The mod 7 image of the Galois representation on the 7-adic Tate module of the elliptic curve is abstractly isomorphic to $C_3 \times S_3$ with $C_3$ as its center. Its projective image is therefore isomorphic to $S_3 = D_3$. By Theorem \ref{main3}, this modular form should be $7$-abelian for the class $0$, and by Theorem \ref{main4} the other classes should be semi-$7$-abelian. Experimentally, one finds that
\[
a_p(f) \bmod 7 \equiv
\begin{cases}
0 &\text{if $p \equiv 0, 3, 5, 6 \bmod 7$;} \\
1, 3 &\text{if $p \equiv 2 \bmod 7$;} \\
2, 6 &\text{if $p \equiv 1 \bmod 7$;} \\
4, 5 &\text{if $p \equiv 4 \bmod 7$.} \\
\end{cases}
\]
As promised by Proposition \ref{goodmod}, the primes for which $a_p \equiv 0 \bmod 7$ are those for which $p$ is not a square modulo $7$. We can explain the precise congruences exhibiting the semi-$7$-abelian nature of the other classes as follows. Explicitly, the image $G_7$ of $\rho_f^7$ is generated by the matrices
\[
\begin{pmatrix} 0 & 1 \\ 1 & 0 \end{pmatrix} \quad \text{and} \quad \begin{pmatrix} 2 & 0 \\ 0 & 1 \end{pmatrix} \quad \text{in} \quad \text{GL}_2( \mathbb{F}_7 ), \quad \text{with $[G_7, G_7]$ generated by} \quad \begin{pmatrix} 2 & 0 \\ 0 & 4 \end{pmatrix}.
\]
The quotient of $G_7$ by $[G_7, G_7]$ is isomorphic to $C_6$ and corresponds with the abelian extension $Z_7 = \mathbb{Q}( \zeta_7 )$, which by virtue of the Weil pairing will always be contained in the smallest field $L_7$ over which the elliptic curve obtains its full 7-torsion. This yields an alternative explanation for the conclusion of Proposition \ref{goodmod} in this setting. The six cosets can now be represented by $\text{id}$, $2 \cdot \text{id}$, $4 \cdot \text{id}$ and further three matrices with zeroes on their diagonals. The latter three cosets only contain traceless matrices; this again recovers the $7$-abelian nature of the vanishing of $a_p(f) \bmod 7$. The first coset only contains matrices of trace $2$ and $6$, and the remaining ones only of traces $4$ and $5$, or $1$ and $3$ respectively.

As a slight variant on the example above, we may also consider the newform associated with the curve with Cremona label 2450a1, whose $\textbf{q}$-expansion reads
\[
f = \textbf{q} -  \textbf{q}^{2} +  \textbf{q}^{4} -  \textbf{q}^{8} - \textbf{q}^{9} - \textbf{q}^{11} +  \textbf{q}^{16} + 7 \textbf{q}^{17} + 3 \textbf{q}^{18} + \ldots.
\]
This time, the associated mod 7 image is of cardinality 36 and abstractly isomorphic to $C_6 \times S_3$. Its projective image is isomorphic to $S_3= D_3$ once more, but the value of $p \bmod 7$ provides less information than before. Considering also the prime $5$ now seems to suggest the following additional congruences:
\[
a_p(f) \bmod 7 \equiv
\begin{cases}
1 &\implies p \equiv 8, 9, 16, 22 \bmod 35; \\
2 &\implies p \equiv 1, 18, 29, 32 \bmod 35; \\
3 &\implies p \equiv 9, 18, 32, 16 \bmod 35; \\
\end{cases} \qquad 
\begin{cases}
4 &\implies p \equiv 2, 4, 11, 23 \bmod 35; \\
5 &\implies p \equiv 4, 8, 11, 22 \bmod 35; \\
6 &\implies p \equiv 1, 2, 23, 29 \bmod 35. \\
\end{cases}
\]
The explanation proceeds as before, but is a bit more laborious. The derived subgroup of the image $G_7$ is generated by the same matrix as above, but now its quotient is isomorphic to $C_6 \times C_2$. One checks that the relevant abelian extension is given by $Z_7 = \mathbb{Q}( \zeta_7, \sqrt{5} ) / \mathbb{Q}$. This field is contained in $\mathbb{Q}( \zeta_{35} )$, and so the splitting behaviour of a rational prime is determined by its value modulo 35. 

Again, half the cosets of the commutator subgroup will consist solely of traceless matrices. In addition, the primes that split completely in $\mathbb{Q}( \zeta_7, \sqrt{5} )$ must be either 1 of 29 modulo 35. This trivial coset still admits only the traces 2 and 6; we leave it to the reader to work out the remaining five cosets.  
\end{vb}

\begin{vb}\label{counter}
We continue by considering the rational newform of weight 2 and level $608 = 2^5 \cdot 19$ associated with the elliptic curve with Cremona label 608e1. Its $\textbf{q}$-expansion starts with
\[
f = \textbf{q} + 3\textbf{q}^5 - 5\textbf{q}^7 - 3\textbf{q}^9 - 5\textbf{q}^{11} - 4\textbf{q}^{13} - 3\textbf{q}^{17} + \textbf{q}^{19} + \ldots
\]
The projective mod 5 image of the Galois representation on the 5-adic Tate module of the elliptic curve is isomorphic to $D_4$. In this case, there is a $5$-torsion point that generates a degree 8 field containing $\mathbb{Q}(i)$. Since $\mathbb{Q}( \zeta_5 ) \subset Z_5$, we must have $\mathbb{Q}( \sqrt{5} ) \subset \overline{Z}_5$, which implies that the quotient $D_4 / [D_4, D_4]$ corresponds to the biquadratic field $\overline{Z}_5 = \mathbb{Q}(i, \sqrt{5}) \subset \mathbb{Q}( \zeta_{20} )$. Two cosets consist of two involutions, one coset of an involution and the trivial element, and the final coset contains two elements of order 4. One now finds that
\[
p \equiv 3 \bmod 4 \implies a_p(f) \equiv 0 \bmod 5,
\]
whereas the converse does \emph{not} hold, for when $p \in \{ 1, 9 \} \bmod 20$, both zero and non-zero values of $a_p(f) \bmod 5$ are possible. In other words, the cyclic subgroup of rotations inside $D_4$ corresponds to $\mathbb{Q}(i)$, and not $\mathbb{Q}( \sqrt{ 5 } )$, which Proposition \ref{goodmod} predicts should always happen when $n$ is odd. This also illustrates that $f$ is \emph{weakly} $5$-abelian for the class $0$, but it is not $5$-abelian.
\end{vb}

\begin{vb}\label{S4exam1}
We next consider the rational newform of weight 2 and level $324 = 2^2 \cdot 3^4$ associated with the elliptic curve with Cremona label 324b1. Its $\textbf{q}$-expansion starts with
\[
f = \textbf{q} + 3 \textbf{q}^5 + 2 \textbf{q}^7 - 6 \textbf{q}^{11} + 5 \textbf{q}^{13} - 3 \textbf{q}^{17} + 2 \textbf{q}^{19} + 6 \textbf{q}^{23} + 4 \textbf{q}^{25} + 3 \textbf{q}^{29} - 4 \textbf{q}^{31} + 6 \textbf{q}^{35} + 5 \textbf{q}^{37} + \ldots
\]
The mod 5 image of the Galois representation on the 5-adic Tate module of the elliptic curve is of cardinality 96 and abstractly isomorphic to $U_2( \mathbb{F}_3 )$, where $U_2( \mathbb{F}_3 )$ denotes the unitary group of $\mathbb{F}_3^2$. Its center is of size 4 and its projective image is of size 24 and isomorphic to $S_4$. 

In the proof of Theorem \ref{main4}, we saw that the commutator subgroup $[G_5,G_5]$ is of size 24, and so of index 4 in $G_5$. In addition, Lemma \ref{trlem3} shows that on this subgroup all traces should occur. The determinant map $\text{det} : G_5 \to \mathbb{F}_{5}^{\times}$ is surjective and so the cosets of $[G_5,G_5]$ in $G_5$ are sets of matrices with some fixed determinant. This shows that the coset of $[G_5,G_5]$ that the element $\text{Frob}_{\mathfrak{p}}$ belongs to is fully determined by the value of $p \bmod 5$. In addition, if the determinant is $\pm 1 \bmod 5$, then we may take a scalar representative of the coset, and it again follows that all traces occur. We saw in the proof of Theorem \ref{main4} that any coset lifting $S_4 \setminus A_4$ in $G_5$ can contain at most 3 traces. Doing the computation, we find that for $p \geq 7$,
\[
a_p(f) \equiv 1, 4 \bmod 5 \implies p \not\equiv 2 \bmod 5 \quad \text{and} \quad a_p(f) \equiv 2, 3 \bmod 5 \implies p \not\equiv 3 \bmod 5.
\]
This illustrates that $f$ is semi-$5$-abelian for all classes in $\mathbb{F}_5^{\times}$, as predicted by Theorem \ref{main4}.
\end{vb}

\begin{vb}\label{S4exam2}
We conclude by considering the rational newform of weight 2 and level $50700 = 2^2 \cdot 3^2 \cdot 5 \cdot 13^2$ associated with the famous elliptic curve with Cremona label 50700u1. Its $\textbf{q}$-expansion starts with
\[
f = \textbf{q} + \textbf{q}^3 + \textbf{q}^9 - 3\textbf{q}^{11} + 7\textbf{q}^{23} + \textbf{q}^{27} - 4 \textbf{q}^{29} - 3\textbf{q}^{33} - \textbf{q}^{37} + \ldots
\]
The mod 13 image of the Galois representation on the 13-adic Tate module of the elliptic curve is of cardinality 288 and abstractly isomorphic to $C_3 \times U_2( \mathbb{F}_3 )$, so its projective image is also isomorphic to $S_4$. By Lemma \ref{trlem3}, on $[G_{13}, G_{13}]$ only the traces $0$, $\pm 1$ and $\pm 2$ should occur. As before, by taking a scalar coset representative, it follows that the only possible traces for $p \equiv x^2 \bmod 13$ are $0, \pm x, \pm 2x \bmod 13$. A similar connection holds between the three possible traces for non-square determinants. We find for $p \neq 2, 3, 5, 13$,
\[
p \bmod 13 =
\begin{cases}
1 & \implies a_p(f) \equiv -2, -1, 0, 1, 2 \bmod 13; \\
2 & \implies a_p(f) \equiv -2, 0, 2 \bmod 13; \\
3 & \implies a_p(f) \equiv -5, -4, 0, 4, 5 \bmod 13; \\
4 & \implies a_p(f) \equiv -4, -2, 0, 2, 4 \bmod 13; \\
5 & \implies a_p(f) \equiv -6, 0, 6 \bmod 13; \\
6 & \implies a_p(f) \equiv -5, 0, 5 \bmod 13;
\end{cases}
\qquad 
\begin{cases}
7 & \implies a_p(f) \equiv -1, 0, 1 \bmod 13; \\
8 & \implies a_p(f) \equiv -4, 0, 4 \bmod 13; \\
9 & \implies a_p(f) \equiv -6, -3, 0, 3, 6 \bmod 13; \\
10 & \implies a_p(f) \equiv -6, -1, 0, 1, 6 \bmod 13; \\
11 & \implies a_p(f) \equiv -3, 0, 3 \bmod 13; \\
12 & \implies a_p(f) \equiv -5, -3, 0, 3, 5 \bmod 13.
\end{cases}
\] 
In spite of these remarkable congruences, it is seen that $0 \in \mathbb{F}_{13}$ is in fact not semi-$\lambda$-abelian.
\end{vb}

\subsection*{Acknowledgements}

The author thanks Pieter Moree for raising the question that is being addressed in this work during a tea break at the Max Planck Institute for Mathematics in Bonn. In addition, the author is indebted to Pieter Moree, Diana Mocanu and Steven Charlton for reading an earlier version of this manuscript and providing helpful feedback. The author also thanks Diana Mocanu and Pengcheng Zhang for helpful discussions and comments. The author is indebted to Robert Pollack for help with computational aspects, the results of which did not find their way into the final version of this work.

\bibliographystyle{alpha}
\bibliography{abcongs}

\end{document}